\theoremstyle{plain}
\newtheorem*{rep@theorem}{\rep@title}
\newcommand{\newreptheorem}[2]{%
\newenvironment{rep#1}[1]{%
 \def\rep@title{#2 \ref{##1}}%
 \begin{rep@theorem}}%
 {\end{rep@theorem}}}
\newtheorem{thm}{Theorem}[section]
\newtheorem{lem}[thm]{Lemma}
\newtheorem{prop}[thm]{Proposition}
\newtheorem{cor}[thm]{Corollary}
\theoremstyle{definition}
\newtheorem*{cont@example}{\cont@title}
\newcommand{\newcontexample}[2]{%
\newenvironment{cont#1}[1]{%
 \def\cont@title{#2 \ref{##1} continued}%
 \begin{cont@example}}%
 {\end{cont@example}}}
\newtheorem{defn}[thm]{Definition}
\newtheorem{rem}[thm]{Remark}
\theoremstyle{remark}
\begin{document}

\title{Dual fans and mirror symmetry}
\author{Patrick Clarke}
\date{\today}

\maketitle

\begin{abstract}
We show that the mirror constructions of Greene-Plesser, Berglund-H\"ubsch, Batryev-Borsov, Givental  and Hori-Vafa 
can be expressed in terms of what we call dual fans.  To do this, we associate to a pair of dual fans a pair of toric Landau-Ginzburg models,
and we describe a process by which each of the mirror constructions listed also produces a pair of toric Landau-Ginzburg models. Replacing 
mirror pairs by toric Landau-Ginzburg models is reversible, and our main result is the dual fan models and the mirror pairs models coincide.
\end{abstract}

\section{Introduction}

In this paper, we show that the mirror constructions of Greene-Plesser \cite{Greene_Plesser_1990}, Berglund-H\"ubsch \cite{Berglund-Hubsch}, Batyrev-Borisov \cite{Batyrev-Borisov-Cones}\footnote{As a consequence, 
Batyrev \cite{Batyrev} and Borisov \cite{Borisov} are also described this way.}, 
Givental \cite{Givental-1998}, and Hori-Vafa \cite{Hori-Vafa} can all be cleanly described in terms  of what
we call {\bf dual fans.}   These are fans $\Sigma \subset N,$ $\Sigma' \subset M$ of strongly convex rational polyhedral cones such that
\begin{itemize}
\item $M$ and $N$ dual finite rank free abelian groups, and 
\item $0 \leq \langle m, n \rangle $ for any $m \in |\Sigma'|$ and $n \in |\Sigma|.$
\end{itemize}

Associated to an ordered pair of dual fans $(\Sigma, \Sigma')$ is a {\bf toric Landau-Ginzburg model.}
This is a morphism
$$
W(\Sigma') \colon \mathbf{X}(\Sigma)\times {C(\Sigma')} \to  \mathbf{A}^1_{C(\Sigma')}
$$
over $C(\Sigma')= \operatorname{Spec}\mathbf{Z}[c_{\rho'}]_{\rho' \in \Sigma'(1)}$. Here $\mathbf{X}(\Sigma)$ is a toric ``variety'' (over $\mathbf{Z},$ or $\mathbf{C},$ or ...) 
formed in the usual way from $\Sigma,$ and 
$$
W(\Sigma') = \sum_{\rho' \in \Sigma'(1)} c_{\rho'} \chi^{u_{\rho'}}
$$
for  the character $\chi^{u_{\rho'}}$  on  $\mathbf{X}(\Sigma)$ corresponding to the integral generator $u_{\rho'} \in M$ of the ray $\rho'.$  Reversing the roles of the fans
$(\Sigma', \Sigma)$ produces the {\bf dual toric Landau-Ginzburg model}
$$
W(\Sigma) \colon \mathbf{X}(\Sigma')\times {C(\Sigma)} \to  \mathbf{A}^1_{C(\Sigma)}.
$$

Depending on the mirror construction, the mirror pair produced is either 
\begin{itemize}
\item a pair or toric Landau-Ginzburg models $W \colon X \to \mathbf{A}^1$ and $W' \colon X' \to \mathbf{A}^1$ \cite{Berglund-Hubsch},
\item complete intersections $Z,$ $Z'$  in a toric varieties $Y$ and $Y'$ \ \cite{Greene_Plesser_1990,Batyrev, Borisov, Batyrev-Borisov-Cones}, or 
\item a complete intersection $Z$ in a toric variety $Y$ and a toric Landau-Ginzburg model $W' \colon X' \to \mathbf{A}^1$ \cite{Givental-1998, Hori-Vafa}.
\end{itemize}
The complete intersections are presented as a global section of a split bundle over an ambient toric variety.  For instance, $Z \subseteq Y$ would be specified as the zero locus of some $g \in \Gamma(Y, \mathcal{V}),$ where 
 $\mathcal{V} = \bigoplus_{i=1}^c \mathcal{L}_i$ for invertible sheaves $\mathcal{L}_i.$

 In order to make our comparisons, to any member of a mirror pair we introduce an {\bf auxiliary toric Landau-Ginzburg model}: 
 $$
\mathbf{W} \colon \mathbf{X}(\Sigma_X)
 \times \Gamma \to \mathbf{A}^1_\Gamma
$$
where
\begin{itemize}
\item $X = 
\left\{ 
\begin{array}{ll}
X & \text{if the member is  $W \colon X \to \mathbf{A}^1,$} \\
\operatorname{\underline{Spec}} \operatorname{Sym^\bullet} \mathcal{V} & \text{if the member is specified by $g \in \Gamma(Y, \mathcal{V})$, }
\end{array}
\right.
$
\item
$
\Gamma = \operatorname{Spec} \mathbf{Z}[\gamma_m]_{m \in \Xi},
$ and
\item 
$
\mathbf{W}  = \sum_{m \in \Xi}  \ \gamma_m \chi^m,
$
where
\item 
$
\Xi = 
\left\{ 
\begin{array}{l}
\text{characters which appear  in the expansion $W = \sum_m g_m \chi^m,$} \\
\text{characters on $\operatorname{\underline{Spec}} \operatorname{Sym^\bullet} \mathcal{V}$ from $\Gamma(Y, \mathcal{V}).$ }
\end{array}
\right.
$
 \end{itemize}
 This toric Landau-Ginzburg model comes with equipped with a {\bf specialization}\footnote{Here the word ``specialization'' just means ``morphism,''  but using it allows us to refer specifically to these morphisms.} given by $\gamma_m \mapsto g_m$
 where the $g_m$'s come from either the expansion in characters of $W = \sum_m g_m \chi^m$  or  the section 
$g = \sum_m g_m \chi^m.$
 
It is somewhat surprising that, provided the specialization $\gamma_m \mapsto g_m$ is remembered, nothing is lost by replacing the complete intersection or Landau-Ginzburg model by 
  the auxiliary Landau-Ginzburg model.
   For instance, given
 $W \colon X \to \mathbf{A}^1$ it is easy to see that $W = \mathbf{W}_g.$  In the case of the complete intersection, observe that from $\mathbf{W}_g \colon X(\Sigma_X)_g \to \mathbf{A}^1$ we can 
recover $Y,$ $\mathcal{V},$ $g$ and thus $Z$ as
  \begin{itemize}
  \item $Y$ is the maximal proper toric stratum in $\mathbf{X}(\Sigma_X)_g,$  
  \item $\mathcal{V}$ is the conormal bundle, $\mathcal{I}_Y / \mathcal{I}_Y^2,$ of $Y$ in $\mathbf{X}(\Sigma_X)_g,$ and 
  \item $g = \mathbf{W}_g$ since $\mathbf{W}_g \in \mathcal{I}_Y.$ 
\end{itemize}
This way we treat any mirror pair in the same way: as a pair 
$$(\mathbf{W} \colon \mathbf{X}(\Sigma_X)\times \Gamma \to \mathbf{A}^1_\Gamma, \gamma \mapsto g) \text{ \quad  and \quad }
(\mathbf{W}' \colon \mathbf{X}(\Sigma_{X'})\times \Gamma' \to \mathbf{A}^1_{\Gamma'}, \gamma' \mapsto g').$$

Finally, our theorem is this:
\begin{thm}
\label{thm:main-theorem}
For any mirror pair,  $\Sigma_X$ and  $\Sigma_{X'}$ are dual fans,  
$\Gamma$ and $\Gamma'$ are affine spaces (by construction), and 
there are inclusions as coordinate subspaces
$$C(\Sigma_{X'}) \to \Gamma \text{ \quad and \quad} C(\Sigma_{X}) \to \Gamma'$$
such that the toric Landau-Ginzburg models given by the fans
$$
W(\Sigma_{X'}) \colon \mathbf{X}(\Sigma_X)\times {C(\Sigma_{X'})} \to  \mathbf{A}^1_{C(\Sigma_{X'})}
\text{ \quad and \quad} 
W(\Sigma_{X}) \colon \mathbf{X}(\Sigma_{X'})\times {C(\Sigma_{X})} \to  \mathbf{A}^1_{C(\Sigma_{X})}
$$
are obtained from
$$\mathbf{W} \colon \mathbf{X}(\Sigma_X)\times \Gamma \to \mathbf{A}^1_\Gamma \text{ \quad  and \quad }
\mathbf{W}' \colon \mathbf{X}(\Sigma_{X'})\times \Gamma' \to \mathbf{A}^1_{\Gamma'}$$
by base change.

Furthermore, if the mirror partner producing $\Sigma_X$ is 
a Landau-Ginzberg model (as opposed to a complete intersection) then then $C(\Sigma_X) \to \Gamma'$
is an isomorphism.
\end{thm}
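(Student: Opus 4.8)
The plan is to verify the theorem construction by construction, the five recipes organizing into three combinatorial families that can be handled in parallel: Berglund--H\"ubsch (invertible polynomials); Batyrev--Borisov together with its specializations Greene--Plesser, Batyrev, and Borisov (reflexive Gorenstein cones and nef partitions); and Givental/Hori--Vafa (a complete intersection mirror to a Landau--Ginzburg model). For a member of a mirror pair the first step is bookkeeping: write down the toric variety $X$, record its fan $\Sigma_X \subset N$ together with the integral generators $u_\rho \in N$ of its rays, and record the finite set $\Xi \subset M$ of characters supporting $\mathbf W$; then do the same for the partner, producing $\Sigma_{X'} \subset M$, ray generators $u_{\rho'} \in M$, and $\Xi' \subset N$. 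In each family this data is already present in the literature --- the nonnegative orthant and the exponent matrix; the face fan of a reflexive polytope and the global sections of $\mathcal V$; the Hori--Vafa fan and superpotential --- so this step merely puts everything in one notation.

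The substantive point, and the step I expect to be the main obstacle, is the dual fan condition $0 \le \langle m, n \rangle$ for all $m \in |\Sigma_{X'}|$ and all $n \in |\Sigma_X|$. In each family this is precisely the combinatorial heart of the mirror construction. For Berglund--H\"ubsch, $|\Sigma_X|$ and $|\Sigma_{X'}|$ are the nonnegative orthants of a dual pair of lattices and the pairing is manifestly nonnegative, the invertibility of the exponent matrix entering only to pin down those lattices and to identify rays with characters. For Batyrev--Borisov, $|\Sigma_X| \subseteq \sigma$ and $|\Sigma_{X'}| \subseteq \sigma^\vee$ for a reflexive Gorenstein cone $\sigma$ of index $c$ and its dual cone, so nonnegativity of the pairing is the defining property of $\sigma^\vee$; the real work is checking those two containments while keeping the Gorenstein grading and the $c$-dimensional ``bundle'' summand straight, and this is where slips are easiest. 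For Givental/Hori--Vafa the same cone-duality picture applies once the Hori--Vafa relations are read as placing the fan of the Landau--Ginzburg model inside the cone dual to the one carrying the complete-intersection data.

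Granting the dual fan condition, the remaining assertions are formal. That each ray generator $u_{\rho'}$ of $\Sigma_{X'}$ lies in $\Xi$ --- what makes $C(\Sigma_{X'})$ a coordinate subspace of $\Gamma$ --- is in every family the classical dictionary identifying the monomials on one side with the lattice points, and in particular the vertices, of the mirror polytope on the other, and it is visible in the data assembled above. The inclusion $C(\Sigma_{X'}) \to \Gamma$ is then induced by the ring surjection $\mathbf Z[\gamma_m]_{m \in \Xi} \to \mathbf Z[c_{\rho'}]_{\rho' \in \Sigma_{X'}(1)}$ sending $\gamma_{u_{\rho'}} \mapsto c_{\rho'}$ and $\gamma_m \mapsto 0$ for $m$ not among the $u_{\rho'}$, and pulling $\mathbf W = \sum_{m \in \Xi} \gamma_m \chi^m$ back along the resulting closed immersion $\mathbf X(\Sigma_X) \times C(\Sigma_{X'}) \hookrightarrow \mathbf X(\Sigma_X) \times \Gamma$ returns exactly $\sum_{\rho' \in \Sigma_{X'}(1)} c_{\rho'} \chi^{u_{\rho'}} = W(\Sigma_{X'})$; this is the claimed base change, and the same argument with the roles of the two fans swapped handles $C(\Sigma_X) \to \Gamma'$ and $W(\Sigma_X)$. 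That $\Gamma$ and $\Gamma'$ are affine spaces is noted to hold by construction, so there is nothing to prove there.

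For the final clause, suppose the partner producing $\Sigma_X$ is a genuine Landau--Ginzburg model $W \colon X \to \mathbf A^1$ --- the Berglund--H\"ubsch case, or the Landau--Ginzburg side of Givental/Hori--Vafa. Then $\mathbf W$ specializes to $W$ and $\Xi$ is exactly the monomial support of $W$ (both already observed in the setup); the point to check, construction by construction, is that the partner's character set $\Xi'$ does not merely contain but equals $\{u_\rho : \rho \in \Sigma_X(1)\}$. For Berglund--H\"ubsch this holds because $X$ and $X'$ are both affine quotients of $\mathbf A^n$ with exactly $n$ coordinate rays while both superpotentials have exactly $n$ terms, so the containment is forced to be an equality; for Givental/Hori--Vafa it is built into the recipe, which introduces one ray of the Landau--Ginzburg model's fan for each monomial of the section data of the mirror complete intersection. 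With $\{u_\rho : \rho \in \Sigma_X(1)\} = \Xi'$ the coordinate inclusion $C(\Sigma_X) \to \Gamma'$ of the previous paragraph is a bijection on coordinates, hence an isomorphism of affine spaces. By contrast, when the partner producing $\Sigma_X$ is itself presented by a section $g \in \Gamma(Y, \mathcal V)$, the theorem claims only an inclusion, and this can be proper: in the Batyrev--Borisov family the dual bundle carries many more section-monomials than $\Sigma_X$ has rays, so $\Xi'$ strictly contains $\Sigma_X(1)$ and no isomorphism is to be expected.
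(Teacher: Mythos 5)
Your proposal is correct and follows essentially the same route as the paper: a construction-by-construction verification in which the dual-fan condition reduces to a cone containment (the positive orthants for Berglund--H\"ubsch, $|\Sigma_X|=K^\vee$ and $|\Sigma_{X'}|=K$ for a reflexive Gorenstein cone in Batyrev--Borisov, and the analogous identification of Givental's $E$ as a toric variety), the coordinate inclusions come from observing that the ray generators of one fan sit inside the character set $\Xi$ of the partner, and the isomorphism in the Landau--Ginzburg case follows because $\Xi'$ is by construction exactly the set of ray generators of $\Sigma_X$. The only caveat is that your write-up defers the genuinely laborious steps --- the proof that $\Sigma_X(1)=\operatorname{vert}(\tilde\nabla)\cup\{e_i^\vee\}$ via the split-bundle fan and the Cartier-ity of the $D_i$, and the presentation of Givental's $E$ as $\operatorname{Spec}\mathbf{C}[K^\vee\cap N_X]\times H$ --- which occupy most of the paper's argument, but you correctly flag them as the substantive content.
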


\begin{rem}
Exhibiting $\Sigma_X$ and $\Sigma_{X'}$ as dual fans implies that the character group $M_X$
and the one-parameter subgroups $N_{X'}$ are identified.  In all the constructions here, except Greene-Plesser, this identification is built-in.  For the Greene-Plesser construction, there is a unique identification that makes the theorem true.
\end{rem}

\begin{rem}
There is no guarantee that the map given by the  specialization $\gamma \mapsto g$ factors through $C(\Sigma_{X'}).$   Nevertheless, these constructions are described by dual fans. 
If the mirror object is a Landau-Ginzburg model, then $C(\Sigma) = \Gamma'$
so the specialization factors.    In the case of a complete intersection, 
there is a process by which 
one can recover $\Gamma$ from $(\Sigma, \Sigma').$ Simply ask 
\begin{quote}
``Does $W(\Sigma') \colon \mathbf{X}(\Sigma)\times {C(\Sigma')} \to  \mathbf{A}^1_{C(\Sigma')}$ look like it came from $g \in \Gamma(Y, \mathcal{V}),$ for some $Y$, and $\mathcal{V}$?''
\end{quote}
When the answer is ``Yes,'' then we can recover 
recover $Y$  and $\mathcal{V}$ as before, and enlarge the base from $C(\Sigma')$ to $\Gamma.$
\end{rem}

We conclude the introduction some 
speculative remarks about  dual fan mirror symmetry.
\begin{rem}
In light of Theorem \ref{thm:main-theorem} an immediate question is ``To what extent do the computations normally compared in mirror symmetry match for the dual toric Landau-Ginzburg models formed from dual fans?''

Theorem \ref{thm:main-theorem}  provides a partial answer to this question in that in many cases both Hodge diamonds  \cite{Berglund-Hubsch, Greene_Plesser_1990, Batyrev-Borisov-Cones, krawitz:2010}
and Gromov-Witten invariants match \cite{Givental-1998}.

An additional question that arises when one considers the birational transformations that happen as one moves about within a toric variety's GKZ fan \cite{GKZ} is  ``To what extent do  mirror computations depend on $\Sigma(1)$ and $\Sigma'(1)$ alone?''
\end{rem}

\section{Toric geometry}
We review here some basic constructions in toric geometry, and give a description of the fan of a 
toric variety of the form $X = \operatorname{\underline{Spec}} \operatorname{Sym^\bullet} \mathcal{V}$
in Corollary \ref{cor:split-bundle-fan}.

\subsection{Fans}

\begin{defn}
A 
{\bf rational polyhedral cone}
$\sigma$ is a subset of a  $\mathbf{Q}$-vector space $V$  that 
can be written  
$$
\{ r_1 v_1 + \dotsm + r_s v_s \  | \ r_i \geq 0 \}
$$
for a finite set of vectors $v_1, \dotsc, v_s \in V.$  The vectors 
$v_1, \dotsc, v_s$ are said to {\bf generate} $\sigma.$
\end{defn}

\begin{defn}
A {\bf face} of a rational polyhedral cone $\sigma$ in $V$ is a subset of the form 
$$
\{ v \in \sigma \ | \ \ell(v) = 0 \}
$$
for a linear functional $\ell \colon V \to \mathbf{Q}$ with the property $
\{ v \in \sigma \ | \ \ell(v) < 0 \}
$ is empty.  A maximal proper face is called a {\bf facet}.
\end{defn}

\begin{defn}
The {\bf dimension} of a cone $\sigma$ is the dimension of the vector space $\operatorname{span}_\mathbf{Q} \sigma.$
\end{defn}

\begin{defn}
A cone $\sigma$ is called {\bf strongly convex} if $0 \in V$ is a face of $\sigma.$
\end{defn}

\begin{defn}
A $1$-dimensional strongly convex cone $\rho \subset N$ is called {\bf ray}.  
The monoid $\rho \cap N$ is isomorphic to $\mathbf{N},$ and we denote its {\bf generator}
by $u_\rho.$
\end{defn}

\begin{defn}
Given a finitely generated free abelian group $N$, a {\bf fan}  $\Sigma$ in $N$ is a collection  of strongly convex rational cones in $N_\mathbf{Q} = N \otimes_\mathbf{Z} \mathbf{Q}$ such that 
\begin{itemize}
\item The face of any cone is a member of $\Sigma,$ and
\item The intersection of any two cones in $\Sigma$ is a face of each.
\end{itemize}
\end{defn}

\begin{defn}
The set of $k$-dimensional cones in a fan $\Sigma$ is denoted $\Sigma(k).$
\end{defn}

\subsection{The toric variety of a fan}

We recall the  standard procedure \cite{demazure-cremona} by which one produces a scheme from a fan.

\begin{defn}
Given a cone $\sigma$ in $V,$ the {\bf dual cone} $\sigma^\vee$
is the subset of $\operatorname{Hom}_\mathbf{Q}(V, \mathbf{Q})$
made up of elements
$$
\{ \ \ell \  | \ \ell(v) \geq 0,  \forall v \in \sigma\}.
$$
\end{defn}

\begin{defn}
If $(\tau, +)$ is a commutative monoid, we denote the {\bf monoid algebra} by $\mathbf{Z}[\tau].$
For an element of $\tau,$ we often write  the associated {\bf monomial} with the element as the exponent of a {\bf base variable}. For instance,
if $v, v' \in \tau,$ and  $x$ as the base variable we have
$$
x^v \cdot x^{v'} = x^{v+v'}.
$$
\end{defn}

\begin{defn}
Denote $M = \operatorname{Hom}_\mathbf{Z}(N, \mathbf{Z}).$
Associated to a fan $\Sigma \subset N$ there is an integral Noetherian scheme  ${\mathbf{X}({\Sigma})}$
with rational functions
$$
\mathbf{Q}[M]
$$
and an open cover
$$
U_\sigma = \operatorname{Spec} \mathbf{Z}[\sigma^\vee \cap M]
$$
for $\sigma \in \Sigma.$  This is called the {\bf toric scheme}\footnote{These properties determine ${\mathbf{X}(\Sigma)}.$} of $\Sigma.$
\end{defn}

\begin{rem}
Setting $
U_\sigma = \operatorname{Spec} \mathbf{C}[\sigma^\vee \cap M]
$ in the above definition yields the usual construction of a toric variety over $\mathbf{C}.$  This can also be achieved by changing base from $\mathbf{Z}$ to $\mathbf{C}.$
\end{rem}

\subsubsection{Quotient fans}
\label{subsection:quotient-fans}
It is often convenient to describe a fan $\Sigma \subset N$ as  the image under a homomorphism $Q \colon \tilde{N} \to N$
of a fan  $\tilde{\Sigma} \subset  \tilde{N}.$
This makes sense provided 
$$
\Sigma = \{ P(\sigma) \subset N \ | \ \sigma \in \tilde{\Sigma} \}
$$
is a fan.  
Under this assumption, the resulting toric scheme $\mathbf{X}(\Sigma)$ is the product 
$$ \mathbf{X}({\tilde{\Sigma}}) / G  \ \times  \  \operatorname{Spec} \mathbf{Z}[K],$$
where 
$K$ is the kernel and $C$ is the cokernel 
of the map $Q^t \colon M \to \tilde{M},$
and
$G = \operatorname{Spec} \mathbf{Z}[C].$ 

This follows easily from the fact that the assignment $\Lambda \mapsto \operatorname{Spec} \mathbf{Z}[\Lambda]$
 is a version of {\bf Cartier duality}.
If one defines the Cartier dual  of $\mathbf{Z}$ to be $\mathbf{G}_m,$
then
this fits in perfectly with the fact that the big torus a toric scheme is the spectrum of  the  group ring of its characters. 
When the abelian group is finite, the Cartier dual equals the Pontryagin dual.

\subsection{Normal fans}

\begin{defn}
A {\bf  rational convex polyhedral set} $P \subseteq M_\mathbf{R}$ is any subset of the form 
$$
P = \{ m \in M_\mathbf{R} \ | \ A(m) + \ell \geq 0 \}
$$
for a homomorphism $A \colon  M \to \mathbf{Z}^{n}$ and an element $\ell \in  \mathbf{R}^{n}.$
The dimension of $P$ is the dimension of the vector space $\operatorname{span}_\mathbf{R} P.$
If $\dim P = \dim M_\mathbf{R},$ then $P$ is called {\bf full dimensional}.
\end{defn}

\begin{defn}
A full dimensional rational convex polyhedral set   $
P
$ defines an {\bf normal fan} $\Sigma_P \subset N.$ The cones of $\Sigma_P $ are labeled by faces $F$ of $P:$
$$
\sigma_F = \{ n \in N_{\mathbf{Q}} \ | \ n(p-p_F) \geq 0 \text{ for all } p \in P \text{ and } p_F \in F\}.
$$
\end{defn}

\subsection{Completely split bundles over a toric variety.}
\label{subsection:split-bundles}

\begin{defn}
Given a fan in $\Sigma \subset N,$ to each ray $\rho \in \Sigma(1)$
we have a {\bf torus invariant Weil divisor} $D_\rho$ whose valuation on characters
$$
M \to \mathbf{Z}. 
$$
is given by $m \mapsto \langle m , u_\rho \rangle.$
\end{defn}

\begin{thm}\cite[Theorem 4.2.8 (a) and (c)]{Cox-Little-Schenck}
\label{theorem:linear-equals-cartier}
An element $D = \sum_{\rho \in  \Sigma(1)} a_\rho D_\rho \in  \mathbf{Z}^{\Sigma(1)}$ 
defines a Cartier divisor on $Y$ if and only if for each cone $\sigma \in \Sigma$ there is a homomorphism
$a_\sigma \colon \sigma \cap N \to \mathbf{Z}$ such that $a_\sigma(u_\rho) = a_\rho$ for all $1$-cones $\rho \subseteq \sigma.$  
\end{thm}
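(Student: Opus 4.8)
The plan is to use that the Cartier condition is local on $Y = \mathbf{X}(\Sigma)$ and that the affine charts $U_\sigma = \operatorname{Spec} \mathbf{Z}[\sigma^\vee \cap M]$ form an open cover, so $D$ defines a Cartier divisor if and only if each restriction $D|_{U_\sigma}$ is Cartier. The engine of the proof is the principal divisor of a character. Directly from the definition that $D_\rho$ carries the valuation $m \mapsto \langle m, u_\rho \rangle$ on $\mathbf{Q}[M]$, one obtains
$$
\operatorname{div}(\chi^{m}) = \sum_{\rho \in \Sigma(1)} \langle m, u_\rho \rangle\, D_\rho .
$$
Because the prime divisor $D_\rho$ meets $U_\sigma$ precisely when the ray $\rho$ is a face of $\sigma$, i.e.\ when $\rho \subseteq \sigma$, restriction to the chart kills the remaining terms:
$$
\operatorname{div}(\chi^{m})\big|_{U_\sigma} = \sum_{\rho \subseteq \sigma} \langle m, u_\rho \rangle\, D_\rho .
$$

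For the ($\Leftarrow$) direction I would first note that any homomorphism $a_\sigma \colon \sigma \cap N \to \mathbf{Z}$ is the restriction of some $m_\sigma \in M$. Indeed $\sigma \cap N$ generates the sublattice $N_\sigma := \operatorname{span}_{\mathbf{R}}(\sigma) \cap N$: choosing a lattice point $n_0$ in the relative interior of $\sigma$, every $n \in N_\sigma$ can be written $n = (k n_0 + n) - k n_0$ with both summands in $\sigma \cap N$ for $k \gg 0$. Since $N_\sigma$ is saturated, $N / N_\sigma$ is free and $a_\sigma$ extends to a homomorphism $N \to \mathbf{Z}$, i.e.\ an element $m_\sigma \in M$, with $\langle m_\sigma, u_\rho\rangle = a_\sigma(u_\rho) = a_\rho$ for all $\rho \subseteq \sigma$. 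The displayed restriction formula then gives $D|_{U_\sigma} = \operatorname{div}(\chi^{m_\sigma})|_{U_\sigma}$, so $D$ is principal on each chart, hence locally principal, hence Cartier.

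The substance is the ($\Rightarrow$) direction, which amounts to showing that a torus-invariant Cartier divisor on the affine toric scheme $U_\sigma$ is the divisor of a single character. I would localize at the distinguished point $x_\sigma \in U_\sigma$ (cut out by all the nonunit characters of $\sigma^\vee \cap M$), which lies in the closure of every torus orbit of $U_\sigma$, and in particular on every $D_\rho$ with $\rho \subseteq \sigma$. A Cartier $D$ has a local equation $f$ at $x_\sigma$; the key linearization step is that, since $D$ and the torus action both fix $x_\sigma$, one may replace $f$ by a character $\chi^{m_\sigma}$ up to a unit, so that $D = \operatorname{div}(\chi^{m_\sigma})$ near $x_\sigma$. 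As $x_\sigma$ lies on all the relevant $D_\rho$ and both divisors are torus-invariant, comparing coefficients along each $D_\rho$ near $x_\sigma$ forces $D|_{U_\sigma} = \operatorname{div}(\chi^{m_\sigma})|_{U_\sigma}$ on the whole chart; setting $a_\sigma = \langle m_\sigma, -\rangle|_{\sigma \cap N}$ produces the required homomorphism. The main obstacle is precisely this homogeneity argument — upgrading an arbitrary local Cartier equation at $x_\sigma$ to a character — together with justifying that equality near $x_\sigma$ propagates over all of $U_\sigma$; both rest on the fact that $x_\sigma$ sits in the closure of every orbit, which is where I would concentrate the careful bookkeeping.
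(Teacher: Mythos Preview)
The paper does not prove this statement: it is quoted verbatim as a theorem from Cox--Little--Schenck \cite[Theorem 4.2.8(a),(c)]{Cox-Little-Schenck} and used as a black box, with no argument supplied. So there is no ``paper's own proof'' to compare against.

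That said, your sketch is essentially the standard argument one finds in that reference. The ($\Leftarrow$) direction is clean as you have it. For ($\Rightarrow$), the step you correctly flag as the crux --- upgrading an arbitrary local equation near $x_\sigma$ to a character --- is usually handled not by pointwise linearization but by a grading argument: since $D$ is torus-invariant, the module $\Gamma(U_\sigma,\mathcal{O}_{U_\sigma}(D)) \subset \mathbf{Q}(M)$ is $M$-graded, and Cartier on the affine $U_\sigma$ forces it to be generated as a $\mathbf{Z}[\sigma^\vee\cap M]$-module by a single element, which one may then take to be homogeneous, i.e.\ equal to $\chi^{-m_\sigma}$ for some $m_\sigma\in M$. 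This gives $D|_{U_\sigma}=\operatorname{div}(\chi^{m_\sigma})|_{U_\sigma}$ on the whole chart in one stroke, so the separate ``propagation from $x_\sigma$'' issue you raise dissolves. With that adjustment your outline matches the textbook proof.
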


\begin{prop} {\bf (line bundle fan) }
\label{proposition:line-bundle-fan}
Let $D = \sum_{\rho \in  \Sigma(1)} a_\rho D_\rho \in  \mathbf{Z}^{\Sigma(1)}$ be a Cartier divisor
on the toric variety $Y$.
Set  $\mathcal{V}  =  \mathcal{O}(D),$ and 
$$X = \operatorname{\underline{Spec}} \operatorname{Sym^\bullet} \mathcal{V}.$$
Then  $X$ is toric with fan 
$\Sigma_X \subset N \oplus \mathbf{Z}.$ The cones of
$\Sigma_X$  can be described in terms of those of $\Sigma_Y$: For each $\sigma \in \Sigma,$
we have the {\bf lifted cone}
$$
\hat{\sigma}_0 = \text{ the cone generated by } \{ (u_\rho, a_\rho) \ | \ \rho \in \sigma(1) \}
$$
and the {\bf over cone}
$$
\hat{\sigma} = 
\hat{\sigma}_0 +  \hat{\rho}_{\text{vert}} 
$$
where the second summand is the {\bf vertical ray}
$$
\hat{\rho}_{\text{vert}} =   \mathbf{Q}_{\geq 0} \cdot (0,1).
$$
\end{prop}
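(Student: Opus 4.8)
The plan is to prove the proposition affine‑locally over $Y$ and then glue. First I would cover $Y=\mathbf{X}(\Sigma)$ by the charts $U_\sigma=\operatorname{Spec}\mathbf{Z}[\sigma^\vee\cap M]$, so that $X$ is covered by $X|_{U_\sigma}=\operatorname{\underline{Spec}}\operatorname{Sym^\bullet}\!\bigl(\mathcal{O}(D)|_{U_\sigma}\bigr)$. Since $D$ is Cartier, Theorem \ref{theorem:linear-equals-cartier} gives for each $\sigma$ a homomorphism $a_\sigma\colon\sigma\cap N\to\mathbf{Z}$ with $a_\sigma(u_\rho)=a_\rho$ for $\rho\in\sigma(1)$; extending $-a_\sigma$ to some $m_\sigma\in M$ (any extension will do — the choice is ambiguous only by $\sigma^\perp$ when $\sigma$ is not full dimensional) we get $\langle m_\sigma,u_\rho\rangle=-a_\rho$ on $\sigma(1)$. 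From the standard description of sections of $\mathcal{O}(dD)$ one then reads off $\Gamma(U_\sigma,\mathcal{O}(dD))=\chi^{dm_\sigma}\cdot\mathbf{Z}[\sigma^\vee\cap M]$ for all $d\ge 0$, so $\operatorname{Sym^\bullet}\!\bigl(\mathcal{O}(D)|_{U_\sigma}\bigr)$ is the polynomial ring $\mathbf{Z}[\sigma^\vee\cap M][\chi^{m_\sigma}]$ with $\chi^{m_\sigma}$ in degree one, and thus $X|_{U_\sigma}\cong\mathbf{A}^1_{U_\sigma}$.

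The core computation is to identify this with $U_{\hat\sigma}$. Dualizing the stated generators of $\hat\sigma\subseteq(N\oplus\mathbf{Z})_\mathbf{Q}$, an element $(m,d)\in M\oplus\mathbf{Z}$ lies in $\hat\sigma^\vee$ iff $d\ge 0$ and $\langle m,u_\rho\rangle+d\,a_\rho\ge 0$ for all $\rho\in\sigma(1)$, which by $\langle m_\sigma,u_\rho\rangle=-a_\rho$ is the condition $d\ge 0$ and $m-d\,m_\sigma\in\sigma^\vee$. Hence the lattice automorphism $\Phi\colon M\oplus\mathbf{Z}\to M\oplus\mathbf{Z}$, $\Phi(m,d)=(m-d\,m_\sigma,d)$, carries $\hat\sigma^\vee\cap(M\oplus\mathbf{Z})$ onto $(\sigma^\vee\cap M)\times\mathbf{N}$, so $\mathbf{Z}[\hat\sigma^\vee\cap(M\oplus\mathbf{Z})]$ is a polynomial ring over $\mathbf{Z}[\sigma^\vee\cap M]$ in one variable, which we may take to be $\chi^{(m_\sigma,1)}$ (the $\Phi$‑preimage of $\chi^{(0,1)}$). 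Matching $\chi^{m_\sigma}\leftrightarrow\chi^{(m_\sigma,1)}$ then gives an isomorphism $X|_{U_\sigma}\cong U_{\hat\sigma}$. The same $\Phi$ shows $\Phi\bigl(\hat\sigma_0^\vee\cap(M\oplus\mathbf{Z})\bigr)=(\sigma^\vee\cap M)\times\mathbf{Z}$, and shows both $\hat\sigma$ and $\hat\sigma_0$ are strongly convex rational cones, their duals being full dimensional exactly when $\sigma^\vee$ is; it also makes visible that the ambiguity of $m_\sigma$ does not affect $\hat\sigma$, $\hat\sigma_0$, or $\hat\sigma^\vee$.

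Next I would check that the claimed collection $\Sigma_X=\{\hat\sigma,\hat\sigma_0:\sigma\in\Sigma\}$ is a fan and that the local isomorphisms glue. For intersections: the projection $N\oplus\mathbf{Z}\to N$ sends $\hat\sigma$ and $\hat\sigma_0$ into $\sigma$, so for $\sigma,\sigma'\in\Sigma$ with common face $\tau=\sigma\cap\sigma'$, any $x\in\hat\sigma\cap\hat\sigma'$ projects into $\tau$; writing $x$ as a nonnegative combination of the generators of $\hat\sigma$ and pairing with a functional cutting out $\tau$ forces the coefficients of the $(u_\rho,a_\rho)$ with $\rho\notin\tau(1)$ to vanish, so $x\in\hat\tau$ and $\hat\sigma\cap\hat\sigma'=\hat\tau$. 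For faces: a supporting functional of $\hat\sigma$ is an $(m_0,c)$ with $c\ge 0$ and $\langle m_0,u_\rho\rangle+c\,a_\rho\ge 0$ on $\sigma(1)$; when $c=0$ it cuts out $\hat\tau$ for $\tau=\sigma\cap m_0^\perp$, and when $c>0$ it cuts out $\hat\tau_0$ for $\tau=\sigma\cap(m_0-c\,m_\sigma)^\perp$ — in particular $(m_\sigma,1)$ exhibits $\hat\sigma_0$ as a facet of $\hat\sigma$ — so every face of $\hat\sigma$, and hence of $\hat\sigma_0\preceq\hat\sigma$, again lies in $\Sigma_X$. Finally, $U_{\hat\sigma}\cap U_{\hat\sigma'}=U_{\hat\tau}$ matches $X|_{U_\sigma}\cap X|_{U_{\sigma'}}=X|_{U_\tau}$, and on this overlap the two identifications differ by the unit $\chi^{(m_\sigma-m_\tau,0)}$, which is exactly $\chi^{m_\sigma-m_\tau}$ realizing the transition function of $\mathcal{O}(D)$; so the local isomorphisms patch to $X\cong\mathbf{X}(\Sigma_X)$.

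The hard part will be the fan verification in the last step: making sure the lifted cones $\hat\sigma_0$ genuinely occur as faces of the over cones and that nothing else appears, and being careful with cones $\sigma$ that are not full dimensional or not smooth, where $m_\sigma$ is defined only up to $\sigma^\perp$ — one must confirm that this ambiguity washes out of $\hat\sigma^\vee$, of the transition data, and of the identification of faces (as already noted, $\Phi$ makes this transparent). Everything else is the routine bookkeeping of matching monoid algebras with polynomial rings.
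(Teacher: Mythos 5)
Your argument is correct and follows essentially the same route as the paper's: both identify, chart by chart over $U_\sigma$, the regular functions on $X\times_Y U_\sigma$ with the monoid algebra of $\hat{\sigma}^\vee\cap(M\oplus\mathbf{Z})$ and then glue. The only cosmetic difference is that the paper fixes a global rational section $p$ of $\mathcal{O}(D)$ with divisor $D$ and reads the regularity conditions off the divisor of the monomial $y^m p^\ell$, whereas you use the local Cartier data $m_\sigma$ and the shear $\Phi(m,d)=(m-d\,m_\sigma,d)$; your verification of the fan axioms and of the gluing is more explicit than the paper's, which dismisses those points as straightforward.
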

\begin{proof}
Choose a rational section $p$ of $\mathcal{O}(D)$ whose divisor is $D.$  Since $Y$ 
is integral so is $X,$ and we can identify the rational functions on $X$ with $\mathbf{Q}[M \oplus \mathbf{Z}],$ where the $\mathbf{Z}$ summand corresponds to powers of $p.$

Now consider $X$ over $U_\sigma \subseteq Y$ for some $\sigma \in \Sigma.$
The regular functions on the affine scheme $X \times_Y U_\sigma$ are given
by a subring of  $\mathbf{Q}[M \oplus \mathbf{Z}].$  Indeed, we may immediately restrict
to $\mathbf{Z}[M \oplus \mathbf{Z}].$  The divisor of the monomial $y^m p^\ell$ 
on  $X \times_Y U_\sigma$ is
$$
\ell \ (Y \cap U_\sigma) +  \sum_{\rho \in \sigma(1)} (u_\rho(m) - \ell a_\rho) \ \ L \times_{Y} (D_\rho \cap U_\sigma) 
$$
where we have written $Y \cap U_\sigma$ for the copy of $U_\sigma$ in the zero section.
Thus it is regular 
on $X \times_Y U_\sigma$ if and only if $\ell \geq 0 $ and $u_\rho(m) - \ell a_\rho \geq 0$ for all $\rho \in \sigma(1).$
These conditions cut out a monoid in $M \oplus \mathbf{Z},$ and thus a subring of 
$\mathbf{Z}[M \oplus \mathbf{Z}].$ 

The elements $\hat{\sigma}$ in $(N \oplus \mathbf{Z})_{\mathbf{Q}}$ which evaluate positively on this monoid are exactly the elements in the cone generated by  $(0,1)$  and $ (u_\rho, a_\rho).$  This is the cone $\hat{\sigma}.$  The cone $\hat{\sigma}_0$ is a face of this cone, and it 
is straightforward to check that these cones form a fan, and the corresponding open sets 
$U_{\hat{\sigma}} = X \times_Y U_{\sigma}$ cover $X$.
\end{proof}

\begin{cor} {\bf (split bundle fan)}
\label{cor:split-bundle-fan}
Iterating this procedure, one can describe 
$$X = \operatorname{\underline{Spec}} \operatorname{Sym^\bullet} \mathcal{V}$$
for $\mathcal{V} = \bigoplus_{i=1}^c \mathcal{L}_i$ where 
$\mathcal{L}_i = \mathcal{O}(D^{(i)})$
for a Cartier divisor
$D^{(i)} = \sum_{\rho \in  \Sigma(1)} a^{(i)}_\rho D_\rho$
by a fan
$$
\Sigma_X \subset N \oplus \mathbf{Z}^c
$$
whose cones can be written in term of those of $\Sigma_Y$ by setting
$$
\hat{\sigma}_{(0, \dotsc,0)} = \text{ the cone generated by }  \{ (u_\rho,   a^{(1)}_\rho, \dotsc,  a^{(c)}_\rho) \ | \ \rho \in \sigma(1)   \}
$$
and then 
$$
\hat{\sigma}_{(b_1, \dotsc, b_c)}  = \hat{\sigma}_{(0, \dotsc,0)} + \sum_i \mathbf{Q}_{\geq 0} \cdot  b_i (0, e^\vee_i)
$$
where $b_i \in \{0,1\}$  and $e^\vee_i$ is the $i^\text{th}$ standard basis vector of $\mathbf{Q}^c.$ 
\end{cor}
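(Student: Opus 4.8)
The plan is to induct on $c$, with each step a single application of Proposition~\ref{proposition:line-bundle-fan}. The case $c=0$ is the tautology $X=Y$, $\Sigma_X=\Sigma$ (no vertical rays), and $c=1$ is Proposition~\ref{proposition:line-bundle-fan} itself. So I would assume the corollary for $\mathcal V'=\bigoplus_{i=1}^{c-1}\mathcal L_i$, set $X'=\operatorname{\underline{Spec}}\operatorname{Sym^\bullet}\mathcal V'$ with the asserted fan $\Sigma_{X'}\subset N\oplus\mathbf Z^{c-1}$ and structure morphism $\pi\colon X'\to Y$, and deduce the case $c$. The first move is to rewrite $X$ as a relative $\operatorname{\underline{Spec}}$ over $X'$: from $\operatorname{Sym^\bullet}(\mathcal V'\oplus\mathcal L_c)\cong\operatorname{Sym^\bullet}\mathcal V'\otimes_{\mathcal O_Y}\operatorname{Sym^\bullet}\mathcal L_c$ and the compatibility of relative $\operatorname{\underline{Spec}}$ with pullback one gets $X\cong\operatorname{\underline{Spec}}_{X'}\operatorname{Sym^\bullet}(\pi^*\mathcal L_c)$, which is exactly the shape to which Proposition~\ref{proposition:line-bundle-fan} applies --- provided one identifies the torus-invariant Cartier divisor representing $\pi^*\mathcal L_c$ and its coefficients on the rays of $\Sigma_{X'}$.

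Those rays are, by the inductive hypothesis, the lifts $\hat\rho$ of the rays $\rho\in\Sigma(1)$, with generator $(u_\rho,a^{(1)}_\rho,\dots,a^{(c-1)}_\rho)$, together with the vertical rays $(0,e^\vee_i)$, $1\le i\le c-1$. Since $\pi$ comes from the projection $N\oplus\mathbf Z^{c-1}\to N$, its map on characters is $m\mapsto(m,0)$, so $\pi^*\chi^{m}=\chi^{(m,0)}$. Combining this with Theorem~\ref{theorem:linear-equals-cartier} (which provides, on each $U_\sigma$, a linear form $m_\sigma$ with $\langle m_\sigma,u_\rho\rangle=a^{(c)}_\rho$ for $\rho\in\sigma(1)$) and the valuation formula $\operatorname{ord}_{D_\tau}\chi^m=\langle m,u_\tau\rangle$, a short local computation shows that $\pi^*D^{(c)}=\sum_{\rho\in\Sigma(1)}a^{(c)}_\rho\,\hat D_{\hat\rho}$, where $\hat D_{\hat\rho}$ is the torus-invariant divisor of $X'$ attached to the lift of $\rho$; this divisor is Cartier on $X'$ (locally it is $\operatorname{div}\chi^{(m_\sigma,0)}$) and it has coefficient $a^{(c)}_\rho$ on $\hat D_{\hat\rho}$ and coefficient $0$ on every vertical divisor. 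Hence $\pi^*\mathcal L_c=\mathcal O_{X'}\!\bigl(\sum_\rho a^{(c)}_\rho\,\hat D_{\hat\rho}\bigr)$.

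With this in hand the rest is bookkeeping. Feeding the divisor $\sum_\rho a^{(c)}_\rho\hat D_{\hat\rho}$ into Proposition~\ref{proposition:line-bundle-fan}: for a cone $\hat\sigma_{(b_1,\dots,b_{c-1})}$ of $\Sigma_{X'}$, generated by the $(u_\rho,a^{(1)}_\rho,\dots,a^{(c-1)}_\rho)$ with $\rho\in\sigma(1)$ together with the $(0,e^\vee_i)$ for which $b_i=1$, the proposition's lifted cone is generated by the $(u_\rho,a^{(1)}_\rho,\dots,a^{(c)}_\rho)$ and the $(0,e^\vee_i,0)=(0,e^\vee_i)\in N\oplus\mathbf Z^c$ --- i.e.\ it is precisely $\hat\sigma_{(b_1,\dots,b_{c-1},0)}$ --- while its over cone, obtained by adjoining the fresh vertical ray $(0,\dots,0,1)=(0,e^\vee_c)$, is $\hat\sigma_{(b_1,\dots,b_{c-1},1)}$. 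Letting $(b_1,\dots,b_{c-1})$ run over $\{0,1\}^{c-1}$ produces exactly the cones $\hat\sigma_{(b_1,\dots,b_c)}$, $(b_1,\dots,b_c)\in\{0,1\}^c$, listed in the statement, and the face relations and the affine cover transport verbatim from Proposition~\ref{proposition:line-bundle-fan}. The only step I expect to require real care is the divisor identification in the middle paragraph --- checking that $\pi^*D^{(c)}$ carries the stated coefficients and that the vertical divisors contribute zero --- after which the induction closes purely combinatorially.
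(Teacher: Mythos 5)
Your proof is correct and is exactly the argument the paper intends: the corollary is stated with no written proof beyond the phrase ``iterating this procedure,'' and your induction on $c$ via Proposition~\ref{proposition:line-bundle-fan} is that iteration carried out carefully, with the one genuinely necessary verification (that $\pi^*D^{(c)}$ has coefficient $a^{(c)}_\rho$ on each lifted ray and $0$ on the vertical rays, which follows from pairing $(m_\sigma,0)$ against the generators $(u_\rho,a^{(1)}_\rho,\dots,a^{(c-1)}_\rho)$ and $(0,e^\vee_i)$) done correctly.
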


\subsection{Global functions on toric varieties.}

\begin{prop} {\bf (regularity)}
\label{prop:regularity}
A rational function   $W = \sum_{\chi} c_\chi \chi$  expanded in characters on a toric variety $X$ is an element of $\mathcal{O}_X(X)$ if and only if for each $\chi$ with non-zero coefficient in the expansion  and all  $\rho \in \Sigma_X(1)$
$$\langle \chi, u_\rho \rangle \geq 0.$$
\end{prop}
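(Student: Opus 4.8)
\emph{Proof sketch.} The plan is to check regularity chart by chart, reduce to a monomial-by-monomial statement via the torus grading, and then pass from the support of the fan to its rays. Since $\mathbf{X}(\Sigma)$ is integral, hence separated, and is covered by the affine charts $U_\sigma$, a function on the big torus is global precisely when it is regular on each $U_\sigma$, so $\mathcal{O}_X(X)=\bigcap_{\sigma\in\Sigma}\mathcal{O}_X(U_\sigma)$ inside the common ring $\mathbf{Q}[M]$ of functions on the big torus. By the construction of the toric scheme, $\mathcal{O}_X(U_\sigma)=\mathbf{Z}[\sigma^\vee\cap M]$, the span of the monomials $\chi^m$ with $m\in\sigma^\vee\cap M$ (the same discussion works verbatim over any base ring).

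Next I would use the $M$-grading $\mathbf{Q}[M]=\bigoplus_{m\in M}\mathbf{Q}\,\chi^m$. Each $\mathcal{O}_X(U_\sigma)$ is spanned by a subset of the monomial basis, hence is a graded submodule, and therefore so is the intersection $\mathcal{O}_X(X)$. Because distinct characters are linearly independent, it follows that $W=\sum_\chi c_\chi\chi$ lies in $\mathcal{O}_X(X)$ exactly when each of its nonzero homogeneous components $c_m\chi^m$ does, that is, exactly when every $m$ appearing with $c_m\neq 0$ lies in $\bigcap_{\sigma\in\Sigma}\sigma^\vee$. Unwinding the definition of the dual cone, $\bigcap_{\sigma}\sigma^\vee=\{\,m\in M:\langle m,n\rangle\geq 0\text{ for all }n\in|\Sigma|\,\}$, where $|\Sigma|=\bigcup_{\sigma\in\Sigma}\sigma$ is the support of the fan.

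It remains to see that $\langle m,n\rangle\geq 0$ for all $n\in|\Sigma|$ is equivalent to $\langle m,u_\rho\rangle\geq 0$ for all $\rho\in\Sigma(1)$. One direction is immediate, since every $u_\rho$ lies in $|\Sigma|$. For the converse, fix $n\in|\Sigma|$; it lies in some cone $\sigma\in\Sigma$, and since a strongly convex rational polyhedral cone is the conical hull of its extreme rays --- which are exactly its one-dimensional faces $\rho\in\sigma(1)$, and $\sigma(1)\subseteq\Sigma(1)$ because faces of cones in a fan are again in the fan --- we may write $n=\sum_{\rho\in\sigma(1)}r_\rho u_\rho$ with each $r_\rho\geq 0$; then $\langle m,n\rangle=\sum_\rho r_\rho\langle m,u_\rho\rangle\geq 0$. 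Chaining the three reductions proves the proposition.

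The only step that is more than bookkeeping is the polyhedral fact used last: a strongly convex rational polyhedral cone is generated by its rays, equivalently its extreme rays coincide with its one-dimensional faces. This is where strong convexity genuinely enters, and it also disposes of the degenerate case $\sigma=\{0\}$, where $\Sigma(1)$ may be empty and, correctly, every character is then a global function (on the torus $\mathbf{X}(\Sigma)$). The separatedness input $\mathcal{O}_X(X)=\bigcap_\sigma\mathcal{O}_X(U_\sigma)$ used at the start is standard and may equally be regarded as part of the construction of $\mathbf{X}(\Sigma)$.
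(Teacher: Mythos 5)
Your argument is correct, and it is the standard one: the paper states this proposition without proof, treating it as a known fact about toric varieties, so there is nothing to compare against beyond noting that your chart-by-chart reduction, the $M$-grading step, and the generation of each strongly convex cone by its rays are exactly the ingredients one would expect. One cosmetic point: ``integral, hence separated'' is not a valid implication (and separatedness is not actually what you use); the identity $\mathcal{O}_X(X)=\bigcap_\sigma\mathcal{O}_X(U_\sigma)$ inside $\mathbf{Z}[M]$ follows from the sheaf gluing axiom together with the injectivity of restriction to the dense torus, which integrality does give you.
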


\begin{cor} {\bf (dual fans and regularity)}
The following are equivalent:
\begin{itemize}
\item $(\Sigma,\Sigma')$ are dual fans.
\item $W(\Sigma')$ is a regular function on $\mathbf{X}(\Sigma)\times {C(\Sigma')}.$
\item $W(\Sigma)$ is a regular function on $\mathbf{X}(\Sigma')\times {C(\Sigma)}.$
\end{itemize}
\end{cor}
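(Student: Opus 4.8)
The plan is to reduce everything to Proposition~\ref{prop:regularity} by computing the rays of the product $\mathbf{X}(\Sigma)\times C(\Sigma')$ and then recognizing the resulting inequalities as a restatement of the dual fan axiom. First I would note that $C(\Sigma')=\operatorname{Spec}\mathbf{Z}[c_{\rho'}]_{\rho'\in\Sigma'(1)}$ is affine space $\mathbf{A}^{\Sigma'(1)}$, hence toric with fan the faces of the first orthant in $\mathbf{Z}^{\Sigma'(1)}$ and with ray generators the standard basis vectors $e_{\rho'}$. Consequently $\mathbf{X}(\Sigma)\times C(\Sigma')$ is toric with character lattice $M\oplus\mathbf{Z}^{\Sigma'(1)}$ and with ray generators the vectors $(u_\rho,0)$ for $\rho\in\Sigma(1)$ together with the vectors $(0,e_{\rho'})$ for $\rho'\in\Sigma'(1)$. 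Under this identification the monomial $c_{\rho'}\chi^{u_{\rho'}}$ is precisely the character $\chi^{(u_{\rho'},\,e_{\rho'})}$, so $W(\Sigma')=\sum_{\rho'}\chi^{(u_{\rho'},\,e_{\rho'})}$ is already its own expansion into characters, a sum of distinct characters each with coefficient $1$.

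Next I would apply Proposition~\ref{prop:regularity} verbatim: $W(\Sigma')$ is regular exactly when $\langle(u_{\rho'},e_{\rho'}),v\rangle\ge 0$ for every ray generator $v$ of $\mathbf{X}(\Sigma)\times C(\Sigma')$. Pairing against $(0,e_{\rho''})$ gives the Kronecker delta $\delta_{\rho'\rho''}\ge 0$, which is automatic, so the only real content is that $\langle u_{\rho'},u_\rho\rangle\ge 0$ for all $\rho'\in\Sigma'(1)$ and all $\rho\in\Sigma(1)$. The final step is to check that this ray-level condition is equivalent to the dual fan axiom $\langle m,n\rangle\ge 0$ for all $m\in|\Sigma'|$ and $n\in|\Sigma|$ (the hypothesis that $M,N$ are dual being part of the ambient setup). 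One implication is trivial, since $u_{\rho'}\in\rho'\subseteq|\Sigma'|$ and $u_\rho\in\rho\subseteq|\Sigma|$; for the converse, any $m\in|\Sigma'|$ lies in a single strongly convex cone of $\Sigma'$ and is therefore a non-negative combination of that cone's ray generators, and likewise for $n\in|\Sigma|$, so bilinearity of the pairing propagates the ray inequalities to $\langle m,n\rangle\ge 0$. This establishes the equivalence of the first two bullets; the equivalence with the third follows by exchanging the roles of $\Sigma$ and $\Sigma'$, since the pairing is symmetric and the dual fan axiom reads identically after the exchange.

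I expect the only subtlety --- minor, but the place where care is needed --- to be the bookkeeping in the first paragraph: getting the product fan, its character lattice, and its ray generators right, and in particular keeping track of the fact that the coefficients $c_{\rho'}$ become honest monomials on the larger toric variety, so that $W(\Sigma')$ is genuinely a single Laurent polynomial on $\mathbf{X}(\Sigma)\times C(\Sigma')$ to which Proposition~\ref{prop:regularity} literally applies. Everything after that point is immediate from bilinearity and the fact that a strongly convex rational polyhedral cone is generated by its rays.
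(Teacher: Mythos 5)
Your proposal is correct and is essentially the argument the paper intends: the corollary is stated as an immediate consequence of Proposition~\ref{prop:regularity}, with the ray-level inequalities $\langle u_{\rho'},u_\rho\rangle\ge 0$ passing to the supports by bilinearity since each strongly convex cone is generated by its rays. Your explicit bookkeeping of the product fan and the identification of $c_{\rho'}\chi^{u_{\rho'}}$ with the character $\chi^{(u_{\rho'},e_{\rho'})}$ just makes precise what the paper leaves implicit.
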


\section{Comparison to existing constructions}

We now make our way through the mirror constructions, verifying Theorem 
\ref{thm:main-theorem}.  In each case the main steps we take are to
\begin{itemize}
\item recall the {\bf original mirror construction},
\item identify the {\bf auxiliary data} 
($\Sigma_X, \Gamma,  \mathbf{W}, \gamma \mapsto g$, 
and 
$\Sigma_{X'}, \Gamma', \mathbf{W}', \gamma' \mapsto g'$),
\item verify $\Sigma_X$ and $\Sigma_{X'}$ are {\bf dual fans},
\item define {\bf inclusions} $C(\Sigma') \to \Gamma$ and $C(\Sigma) \to \Gamma',$ and finally
\item we verify that the LGs given by the dual fans $\Sigma_X$ and $\Sigma_{X'}$
are  produced by {\bf base change} to 
$C(\Sigma')$ and $C(\Sigma)$
from the auxiliary ones.
\end{itemize}

\subsection{The quintic threefold \cite{Candelas-delaOssa-Green-Parks}}

Here both sides of the mirror are hypersurfaces, so $\Gamma$ and $\Gamma'$ are the affine spaces of global sections, and $\mathbf{W}$ and $\mathbf{W}'$ are the universal sections considered as functions.

\begin{defn} {\bf (Candelas-de la Ossa-Green-Parks family of quintics and its mirror)}
 Candelas-de la Ossa-Green-Parks \cite{Candelas-delaOssa-Green-Parks}  considers the family  depending on the parameter $\psi$
of subschemes of $\mathbf{P}^4$ 
defined by the  polynomial 
\begin{equation}
\label{equation:quintic}
g  = Y_0^5 + Y_1^5 + Y_2^5 + Y_3^5 + Y_4^5 - 5 \psi Y_0 Y_1 Y_2 Y_3 Y_4.
\end{equation}
The mirror family is formed by taking a $(\mathbf{Z}/5\mathbf{Z})^3$ quotient of these quintics.  
Explicitly, the group action is
\begin{equation}
\label{equation:z5z-action}
 (\zeta_1, \zeta_2, \zeta_3) \star (Y_0, Y_1, Y_2, Y_3)  = (   Y_0, \  \zeta_1 Y_1,  \  \zeta_2 Y_2,  \ \zeta_3  Y_3, \  (\zeta_1 \zeta_2 \zeta_3)^{4} Y_4 )
 \end{equation}
 where the $\zeta$'s are $5^\text{th}$ roots of unity.
\end{defn}

\begin{defn} {\bf (base fan)}
Writing $F(-)$ for the free group, 
the fan of $\mathbf{P}^4$ is 
\begin{equation*}
\Sigma_Y \subset
N = F(\{ u_{\rho_0}, u_{\rho_1}, u_{\rho_2}, u_{\rho_3}, u_{\rho_4} \}) / \langle 
u_{\rho_0} + u_{\rho_1}+ u_{\rho_2} + u_{\rho_3} + u_{\rho_4}
\rangle
\end{equation*}
with cones given by the $\mathbf{Q}_{\geq 0}$-spans of the subsets of size $\leq 4$ 
of the generators.
\end{defn}

\begin{prop} {\bf ($\Sigma_X$)}
With $\mathcal{V} = \mathcal{O}(5),$
the fan of the toric variety $X =\operatorname{\underline{Spec}} \operatorname{Sym^\bullet} \mathcal{V}$
is 
\begin{equation*}
\Sigma_X \subset \overline{N} = N \oplus \mathbf{Z}
\end{equation*}
with cones given by the $\mathbf{Q}_{\geq 0}$-spans of the subsets of 
$$
\{(0,1), (u_{\rho_0}, 1), (u_{\rho_1}, 1), (u_{\rho_2}, 1) , (u_{\rho_3},1), (u_{\rho_4},1) \}
$$ 
that exclude at least one element of the form $(u_{\rho_i},1).$
\end{prop}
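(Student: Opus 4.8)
The plan is to realize $X$ via the line bundle fan construction of Proposition \ref{proposition:line-bundle-fan} (the $c=1$ case of Corollary \ref{cor:split-bundle-fan}) applied to a carefully chosen torus-invariant representative of the class $\mathcal{O}(5)$, and then merely translate the combinatorial output. First I would choose the divisor $D = \sum_{i=0}^{4} D_{\rho_i}$ on $\mathbf{P}^4$. Since each $D_{\rho_i}$ is a hyperplane, $\mathcal{O}(D) \cong \mathcal{O}(5)$ (equivalently, $D = -K_{\mathbf{P}^4}$). The one preliminary point to check is that $D$ is Cartier: every maximal cone $\sigma \in \Sigma_Y$ is spanned by four of the five generators $u_{\rho_i}$, and these four form a $\mathbf{Z}$-basis of $N$ (using the relation $\sum_i u_{\rho_i}=0$), so the linear functional on $N$ taking the value $1$ on each of them supplies the datum $a_\sigma$ demanded by Theorem \ref{theorem:linear-equals-cartier}; hence $D$ is Cartier with $a_\rho = 1$ for every $\rho \in \Sigma_Y(1)$.

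Next I would invoke Proposition \ref{proposition:line-bundle-fan} with these coefficients $a_\rho = 1$. It produces, for each cone $\sigma \in \Sigma_Y$, the lifted cone $\hat\sigma_0$ generated by $\{(u_\rho,1) \mid \rho \in \sigma(1)\}$ together with the over cone $\hat\sigma = \hat\sigma_0 + \mathbf{Q}_{\geq 0}\cdot(0,1)$, and asserts that these are exactly the cones of the fan $\Sigma_X \subset N \oplus \mathbf{Z}$ of $X$. It then remains to match the two descriptions: the cones of $\Sigma_Y$ are precisely the $\mathbf{Q}_{\geq 0}$-spans of the subsets $\{u_{\rho_i} \mid i \in S\}$ with $S \subsetneq \{0,1,2,3,4\}$, so the lifted cones $\hat\sigma_0$ run through the $\mathbf{Q}_{\geq 0}$-spans of the subsets of $\{(u_{\rho_0},1),\dots,(u_{\rho_4},1)\}$ that omit at least one element, while the over cones $\hat\sigma$ run through those same subsets with $(0,1)$ adjoined. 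Their union is exactly the collection of $\mathbf{Q}_{\geq 0}$-spans of subsets of $\{(0,1),(u_{\rho_0},1),\dots,(u_{\rho_4},1)\}$ that exclude at least one element of the form $(u_{\rho_i},1)$, which is the asserted description; that this collection is a fan is already guaranteed by Proposition \ref{proposition:line-bundle-fan}.

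I do not expect a substantive obstacle here. The only thing requiring care is that the presentation of $\Sigma_X$ depends on the chosen invariant representative of $\mathcal{O}(5)$: a different choice, such as $5 D_{\rho_0}$, yields an isomorphic $X$ but a fan expressed in a sheared splitting of $\overline N$. Thus the statement should be read as singling out the symmetric splitting $\overline N = N \oplus \mathbf{Z}$ coming from $D = \sum_i D_{\rho_i}$, and the whole proof is essentially bookkeeping once that choice is fixed.
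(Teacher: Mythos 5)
Your proposal is correct and follows exactly the paper's route: the paper's (one-sentence) proof likewise chooses the representative $D = D_0 + \dotsb + D_4$ and applies Proposition \ref{proposition:line-bundle-fan}, with the cone-matching left implicit. Your additional verification that $D$ is Cartier with $a_\rho = 1$ and your explicit matching of lifted and over cones to the stated subsets are just the bookkeeping the paper omits.
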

\begin{proof}
Following the method
of Proposition \ref{proposition:line-bundle-fan},
 we choose the divisor $$D=D_0+D_1+D_2+D_3+D_4$$ where $D_i = \{[Y_0 : \dotsm : Y_4] \ | \ Y_i=0\}.$  Then by excluding  at least one of the lifted rays in forming our cones, we get all the cones described in the proposition.
\end{proof}

\begin{defn}{\bf (invariant characters)}
The characters $\overline{M}$ on $X$ are naturally thought of as those Laurent monomials in $Y_0, \dotsc, Y_4$  with degree divisible by $5.$
Write $\overline{M}' \subseteq \overline{M}$ for those monomials invariant under the action of $(\mathbf{Z}/5\mathbf{Z})^3$
given in Equation (\ref{equation:z5z-action}), and set $\overline{N}' = \operatorname{Hom}_\mathbf{Z}(\overline{M}', \mathbf{Z}).$
\end{defn}

\begin{prop}{\bf ($\Sigma_{X'}$)}
The fan $
\Sigma_{X'} \subset \overline{N}' 
$ of $X' = X/(\mathbf{Z}/5\mathbf{Z})^3$ 
is the quotient fan of $\Sigma_X$ under the map $\overline{N} \to \overline{N}'$
transpose to the inclusion $\overline{M}' \hookrightarrow \overline{M}.$
\end{prop}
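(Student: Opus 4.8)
The plan is to reduce the claim to a statement about the Cartier‑duality dictionary of Section~\ref{subsection:quotient-fans}. The toric variety $X'$ is by definition the quotient $X/(\mathbf{Z}/5\mathbf{Z})^3$, and the group $(\mathbf{Z}/5\mathbf{Z})^3$ acts on $X$ through characters, i.e.\ it is a subgroup of the big torus of $X$. Under Cartier duality, a subgroup of the torus $\operatorname{Spec}\mathbf{Z}[\overline M]$ corresponds to a quotient of $\overline M$ (or dually, a finite‑index inclusion of a subgroup into $\overline M$): concretely, the subgroup $(\mathbf{Z}/5\mathbf{Z})^3$ is the Cartier dual of the quotient $\overline M / \overline M'$, since $\overline M'$ is precisely the group of characters that are invariant (trivial) on that subgroup. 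So the first step is to verify that $\overline M' = \{\,m \in \overline M \mid \chi^m \text{ is fixed by } (\mathbf{Z}/5\mathbf{Z})^3\,\}$ really does coincide with the definition given in the ``invariant characters'' Definition, and that $\overline M/\overline M'$ is finite (hence its Cartier dual is its Pontryagin dual, namely $(\mathbf{Z}/5\mathbf{Z})^3$). Both are immediate from Equation~(\ref{equation:z5z-action}).

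**Applying the quotient‑fan description.** With that identification in hand, I would take $\tilde N = \overline N$, $N = \overline N'$, and $Q\colon \overline N \to \overline N'$ the transpose of the inclusion $\overline M' \hookrightarrow \overline M$, and then invoke the discussion of Section~\ref{subsection:quotient-fans} directly. There the transpose map $Q^t\colon \overline M' \to \overline M$ is our inclusion; since $\overline M/\overline M'$ is finite and $\overline M' \hookrightarrow \overline M$ is injective, the kernel $K$ of $Q^t$ is $0$ and the cokernel $C$ is the finite group $\overline M/\overline M'$, whose Cartier dual is $G = (\mathbf{Z}/5\mathbf{Z})^3$. The formula from that section then gives $\mathbf{X}(\Sigma') = \mathbf{X}(\Sigma_X)/G \times \operatorname{Spec}\mathbf{Z}[K] = X/(\mathbf{Z}/5\mathbf{Z})^3 \times \operatorname{Spec}\mathbf{Z} = X'$, where $\Sigma'$ denotes the image fan $\{Q(\sigma) \mid \sigma \in \Sigma_X\}$. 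Hence, provided this image collection is actually a fan, it must be the fan of $X'$, which is exactly the assertion.

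**The main obstacle.** The one genuine point to check — and the step I expect to require the most care — is the hypothesis needed to apply Section~\ref{subsection:quotient-fans} in the first place: that $\{Q(\sigma) \mid \sigma \in \Sigma_X\}$ is a fan, i.e.\ that images of cones of $\Sigma_X$ under $Q$ are strongly convex and meet along common faces. This is where the specific geometry of the quintic enters rather than just formal nonsense. I would argue it as follows: the cones of $\Sigma_X$ are spanned by subsets of $\{(0,1),(u_{\rho_0},1),\dots,(u_{\rho_4},1)\}$ omitting at least one $(u_{\rho_i},1)$, as in the preceding Proposition; one checks that the relations imposed by passing to $\overline N' = \operatorname{Hom}(\overline M',\mathbf{Z})$ are exactly those generated by the $(\mathbf{Z}/5\mathbf{Z})^3$‑orbit sums, which identify the images of the lifted rays in a way compatible with the combinatorics of the simplex, so that no two distinct maximal cones' images overlap in their interiors and strong convexity is preserved (equivalently, the quotient map on the torus is finite, so $X \to X'$ is finite and the quotient really is a toric variety with the image fan). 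Once this combinatorial/finiteness check is done, the identification of $X'$ with $\mathbf{X}(\Sigma')$ is forced by the Cartier‑duality formula and uniqueness of the toric scheme attached to a fan.
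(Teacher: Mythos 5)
Your proposal is correct and follows essentially the same route as the paper, which simply cites the quotient-fan discussion of Subsection \ref{subsection:quotient-fans}; you have filled in the Cartier-duality bookkeeping (kernel $K=0$, cokernel $\overline{M}/\overline{M}'\cong(\mathbf{Z}/5\mathbf{Z})^3$) that the paper leaves implicit. Your flagged check that the image collection $\{Q(\sigma)\}$ is actually a fan is a genuine hypothesis of that subsection which the paper's one-line proof does not address, so including it is a strict improvement rather than a deviation.
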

\begin{proof}
This is an example of a situation in which the quotient is presented by the quotient fan as described in Subsection \ref{subsection:quotient-fans}. 
\end{proof}

\begin{prop} {\bf ($\overline{N}' = \overline{M}$)}
The assignment 
$$
(n, k) \mapsto (Y_0 Y_1Y_2 Y_3 Y_4)^k
\prod_i (Y_i^5/Y_0 Y_1 Y_2 Y_3 Y_4)^{n_i} 
$$
for $n = n_{0} u_{\rho_0} + \cdots + n_{5} u_{\rho_5} \in N$
and $k \in \mathbf{Z}$
injectively maps $\overline{N}$ into $\overline{M}$
 and with image $\overline{M}'.$  The transpose isomorphism $\overline{N}' \to \overline{M}$ allows us to meaningfully write
 $$
 \Sigma_{X'} \subset \overline{M}.
 $$
\end{prop}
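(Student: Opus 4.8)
The plan is to translate everything into exponent vectors of Laurent monomials in $Y_0,\dots,Y_4$ and then do a small amount of linear algebra over $\mathbf{Z}/5\mathbf{Z}$. By the definition of the invariant characters, $\overline{M}$ is the lattice $\{a=(a_0,\dots,a_4)\in\mathbf{Z}^5 : a_0+\dots+a_4\equiv 0 \pmod 5\}$, while by construction $\overline{N}=N\oplus\mathbf{Z}$ with $N=\mathbf{Z}^5/\langle(1,1,1,1,1)\rangle$. The first step is well-definedness: since $\prod_{i=0}^{4}\bigl(Y_i^5/(Y_0Y_1Y_2Y_3Y_4)\bigr)=1$, replacing a representative $n=\sum n_i u_{\rho_i}$ by $\sum(n_i+1)u_{\rho_i}$ multiplies the target monomial by $1$, so the assignment descends to $\overline{N}$; additivity in $(n,k)$ is then immediate from reading off exponents. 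Writing $s=\sum_i n_i$, the exponent of $Y_j$ in the image of $(n,k)$ is $a_j = k-s+5n_j$. Hence the total degree is $5k$, so the image lies in $\overline{M}$, and moreover $a_0\equiv a_1\equiv\dots\equiv a_4\equiv k-s\pmod 5$.

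The second step is to identify $\overline{M}'$ with exactly the sublattice reached above. The action (\ref{equation:z5z-action}) sends $\prod_i Y_i^{a_i}$ to $\zeta_1^{a_1+4a_4}\zeta_2^{a_2+4a_4}\zeta_3^{a_3+4a_4}\prod_i Y_i^{a_i}$, so invariance under all fifth roots of unity is equivalent to $a_1\equiv a_2\equiv a_3\equiv a_4\pmod 5$ (using $4\equiv-1$); combined with the membership condition $a_0+\dots+a_4\equiv0\pmod5$ defining $\overline{M}$, this forces $a_0\equiv a_4\pmod5$ as well. Conversely any $a$ with all $a_j$ congruent mod $5$ is invariant and automatically has degree divisible by $5$. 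Therefore $\overline{M}'=\{a\in\mathbf{Z}^5 : a_0\equiv a_1\equiv a_2\equiv a_3\equiv a_4\pmod5\}$, which is precisely the image from step one.

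The third step is to verify that $\overline{N}\to\overline{M}'$ is an isomorphism. For injectivity: if $a_j=0$ for all $j$ then $5n_j=s-k$ is independent of $j$, so all $n_j$ are equal, whence $n=0$ in $N$ and then $k=0$. For surjectivity: given $a\in\overline{M}'$, let $r\in\{0,\dots,4\}$ be the common residue $a_j\bmod 5$, and set $n_j=(a_j-r)/5$ together with $k=(a_0+\dots+a_4)/5$, an integer since $a\in\overline{M}$; then $s=\sum_j n_j=k-r$, so $k-s=r$ and $k-s+5n_j=a_j$, i.e. $(n,k)\mapsto a$. Finally, applying $\operatorname{Hom}_\mathbf{Z}(-,\mathbf{Z})$ to the isomorphism $\overline{N}\xrightarrow{\ \sim\ }\overline{M}'$ just produced gives the transpose isomorphism $\overline{N}'=\operatorname{Hom}_\mathbf{Z}(\overline{M}',\mathbf{Z})\xrightarrow{\ \sim\ }\operatorname{Hom}_\mathbf{Z}(\overline{N},\mathbf{Z})=\overline{M}$, which is what lets us meaningfully write $\Sigma_{X'}\subset\overline{M}$.

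No step here is conceptually hard; the only place that genuinely needs care is the second one — correctly reducing the invariance conditions mod $5$ (the substitution $4\equiv-1$, and the observation that it is the degree constraint defining $\overline{M}$, not the group action, that pins down the congruence on $a_0$) — together with making sure the identification of $\overline{M}$ with degree-divisible-by-$5$ monomials is the one fixed by the earlier definitions rather than an ad hoc choice. Everything else is bookkeeping organized around the single formula $a_j=k-s+5n_j$.
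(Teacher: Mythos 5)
Your proposal is correct and follows essentially the same route as the paper: a direct translation into exponent vectors followed by an integer linear-algebra verification. The only difference is one of packaging --- the paper pins down $M'$ by listing four free generators and asserts the rest as ``a computational check,'' whereas you characterize $\overline{M}'$ by the single congruence $a_0\equiv\cdots\equiv a_4\pmod 5$ and carry out that check explicitly via the formula $a_j=k-s+5n_j$, which actually supplies the detail the paper omits.
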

\begin{proof}
To be sure, $M'$ is made up of monomials with degree $0,$ and is  freely generated by  those 
 with exponent vectors 
\begin{equation}
\label{equation:exponent-vectors}
\{ (-1,  \ 4, -1, -1, -1), 
( -1, -1, \ 4, -1,  -1), 
(-1, -1, -1, \ 4,  -1), 
(-1, -1, -1, -1, \ 4)
\}.
\end{equation}
A computational check shows  the assignment 
 \begin{equation} 
 \label{equation:sigma'-in-M} 
 u_{\rho_i} \mapsto Y_i^5/ Y_0 Y_1 Y_2 Y_3 Y_4
 \end{equation}
 gives a well defined homomorphism and 
 identifies $N$ and $M'.$  Finally, sending $(0,1) \mapsto Y_0 Y_1 Y_2 Y_3 Y_4$ and $$\overline{M}' =  
\bigcup_k  \ (Y_0 Y_1 Y_2 Y_3 Y_4)^k \ 
M'$$
completes the isomorphism $\overline{M}' = \overline{N}.$
\end{proof}

\begin{prop} {\bf ($\Gamma, \mathbf{W}, \gamma \mapsto g$)}
\label{prop:quintic-associated-LG}
The global sections $\Gamma(\mathbf{P}^4, \mathcal{V})$ considered as functions on $X$ 
has a basis of monomials
$\Xi = \{ \text{degree 5 monomials in $Y_0, \cdots, Y_4 $} \}.$
Thus, 
$$
\Gamma = \operatorname{Spec}  \  \mathbf{Z}[\gamma_m]_{m \in \Xi}
$$
and the auxiliary Landau-Ginzburg model of the quintic is 
$$
\mathbf{W} \colon {\bf X}(\Sigma_X)_\Gamma \to \mathbf{A}^1_\Gamma
$$
where 
$$
\mathbf{W} = \sum_{m \in \Xi} \gamma_m Y^m.
$$
The specialization
$$
\mathbf{A}^1_{\psi} \to \Gamma
\quad \text{ given by sending } \quad 
\gamma_{m} \mapsto 
\left\{
\begin{array}{cl}
-5\psi & \ \ \text{if $m = Y_0 Y_1 Y_2 Y_3 Y_4,$} \\
1 &  \ \ \text{if $m = Y_i^5$ for some $i$,} \\
0 &  \ \ \text{otherwise} \\
\end{array}
\right.
$$
sets the coefficients to those used in (\ref{equation:quintic}).
\end{prop}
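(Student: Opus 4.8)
Proposition~\ref{prop:quintic-associated-LG} is essentially a bookkeeping statement, and the plan is to read off $\Xi$ directly from the description of $\Sigma_X$ via the regularity criterion. First I would fix the dictionary between characters of $X$ and Laurent monomials. With $\mathcal{V} = \mathcal{O}(5) = \mathcal{O}(D)$ for $D = D_0 + \cdots + D_4$, the base variable $p$ of Proposition~\ref{proposition:line-bundle-fan} can be taken to be the regular section $Y_0 Y_1 Y_2 Y_3 Y_4$ (its divisor is $D$), so a character $(m,\ell) \in \overline{M} = M \oplus \mathbf{Z}$ of $X$ is the Laurent monomial $Y^m (Y_0 \cdots Y_4)^\ell$; since every $m \in M$ has degree $0$ this has degree $5\ell$, recovering the description in the Definition of invariant characters. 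Under $\pi \colon X \to \mathbf{P}^4$ the subsheaf $\mathcal{V} \subseteq \pi_*\mathcal{O}_X$ is spanned by the characters linear in $p$, i.e.\ those with $\ell = 1$, so $\Gamma(\mathbf{P}^4, \mathcal{V})$, considered as functions on $X$, is the span of the regular characters of the form $(m,1)$.

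Next I would apply regularity. By Proposition~\ref{prop:regularity} together with the computation of $\Sigma_X(1) = \{(0,1)\} \cup \{(u_{\rho_i},1) : 0 \le i \le 4\}$ from the preceding proposition, a character $(m,1)$ is a global function on $X$ exactly when $\langle m, u_{\rho_i} \rangle + 1 \ge 0$ for all $i$. Writing $b_i = \langle m, u_{\rho_i} \rangle + 1$ (the valuation of $Y^m$ along $D_{\rho_i}$ is $\langle m, u_{\rho_i} \rangle$ and $p$ contributes $a_{\rho_i} = 1$), we get $Y^m p = \prod_i Y_i^{b_i}$ with each $b_i \ge 0$, and since $\sum_i u_{\rho_i} = 0$ in $N$ the exponents satisfy $\sum_i b_i = 5$. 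Thus the regular characters linear in $p$ are precisely the degree-$5$ monomials in $Y_0, \ldots, Y_4$, and that these form a $\mathbf{Z}$-basis of $\Gamma(\mathbf{P}^4, \mathcal{O}(5))$ is the standard description of sections of $\mathcal{O}(d)$ on projective space. This identifies $\Xi$, whence $\Gamma = \operatorname{Spec}\mathbf{Z}[\gamma_m]_{m \in \Xi}$ and the auxiliary Landau--Ginzburg model $\mathbf{W} = \sum_{m\in\Xi}\gamma_m Y^m$ is the universal section viewed as a function on $\mathbf{X}(\Sigma_X)_\Gamma$, exactly as in the general construction of the introduction.

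For the specialization I would simply substitute the displayed values $\gamma_m = 1$ for $m = Y_i^5$, $\gamma_m = -5\psi$ for $m = Y_0Y_1Y_2Y_3Y_4$, and $\gamma_m = 0$ otherwise, obtaining $\mathbf{W} = Y_0^5 + Y_1^5 + Y_2^5 + Y_3^5 + Y_4^5 - 5\psi\, Y_0 Y_1 Y_2 Y_3 Y_4$, which is the polynomial $g$ of (\ref{equation:quintic}). There is no genuine obstacle here; the only point that needs care is the identification of ``global sections of $\mathcal{V}$ as functions on $X$'' with the $p$-linear regular characters, and the combinatorial identity $\sum_i \langle m, u_{\rho_i} \rangle = 0$ that forces the total degree of each monomial to be exactly $5$ — everything else is a direct verification.
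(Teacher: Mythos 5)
Your proposal is correct, and it simply spells out in detail the verification that the paper dismisses with ``These statements are immediate'': the identification of $\Gamma(\mathbf{P}^4,\mathcal{O}(5))$ with the $p$-linear regular characters on $X$ via Proposition \ref{proposition:line-bundle-fan}, the regularity criterion against the rays $(0,1)$, $(u_{\rho_i},1)$ of $\Sigma_X$, and the relation $\sum_i u_{\rho_i}=0$ forcing total degree $5$. This is the same approach the paper intends, just written out; the substitution check for the specialization is likewise as expected.
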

\begin{proof}
These statements are immediate.
\end{proof}

\begin{prop} {\bf ($\Gamma', \mathbf{W'}, \gamma' \mapsto g'$)}
\label{prop:quintic-quot-associated-LG}
Functions on $X'$ which pullback to elements of $\Gamma(Y, \mathcal{V})$ 
have a basis of monomials
$$\Xi' = \{ \text{degree 5 monomials in $Y_0, \cdots, Y_4 $ invariant under the action 
$(\mathbf{Z}/5\mathbf{Z})^3$}\}.$$ Thus  
$$
\Gamma' = \operatorname{Spec}\  \mathbf{Z}[\gamma'_m]_{m \in \Xi'}
$$
and the 
auxiliary Landau-Ginzburg model of the quintic-mirror is 
$$
\mathbf{W'} \colon X(\Sigma')_{\Gamma'} \to \mathbf{A}^1_{\Gamma'}
$$
where 
$$
\mathbf{W'}  = \sum_{m \in \Xi'} \gamma'_m Y^m.
$$
The specialization
$$
\mathbf{A}^1_{\psi} \to \Gamma'
\quad \text{ given by sending } \quad 
\gamma'_{m} \mapsto 
\left\{
\begin{array}{cl}
-5\psi & \ \ \text{if $m = Y_0 Y_1 Y_2 Y_3 Y_4,$} \\
1 &  \ \ \text{if $m = Y_i^5$ for some $i$,} \\
0 &  \ \ \text{otherwise} \\
\end{array}
\right.
$$
sets the coefficients to those used in (\ref{equation:quintic}).
\end{prop}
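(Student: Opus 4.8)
The plan is to reduce everything to the quotient presentation $X' = X/(\mathbf{Z}/5\mathbf{Z})^3$ together with the monomial description of $\Gamma(\mathbf{P}^4,\mathcal V)$ already established in Proposition~\ref{prop:quintic-associated-LG}, so that the proof becomes a short bookkeeping exercise with the weights in \eqref{equation:z5z-action}.

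First I would record that, by construction, the mirror family is the quotient $Z'=Z/(\mathbf{Z}/5\mathbf{Z})^3 \subseteq Y'=\mathbf{P}^4/(\mathbf{Z}/5\mathbf{Z})^3$, cut out by the section $g'$ of the bundle $\mathcal V'$ on $Y'$ obtained by descending $\mathcal V=\mathcal O(5)$ and $g$; since forming $\operatorname{\underline{Spec}}\operatorname{Sym^\bullet}$ commutes with the finite quotient, $X'=\operatorname{\underline{Spec}}\operatorname{Sym^\bullet}\mathcal V'$ is precisely $X/(\mathbf{Z}/5\mathbf{Z})^3$, and the relevant morphism along which functions are pulled back is the quotient map $X\to X'$. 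Because global sections commute with invariants under a finite group, $\mathcal O_{X'}(X')=\mathcal O_X(X)^{(\mathbf{Z}/5\mathbf{Z})^3}$ (equivalently, this is visible from the quotient-fan description of $\mathbf{X}(\Sigma_{X'})$ recalled above, with character lattice $\overline M'\subseteq\overline M$); so a function on $X'$ pulls back to an element of $\Gamma(\mathbf{P}^4,\mathcal V)\subseteq\mathcal O_X(X)$ exactly when it is a $(\mathbf{Z}/5\mathbf{Z})^3$-invariant element of $\Gamma(\mathbf{P}^4,\mathcal V)$.

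Next I would invoke Proposition~\ref{prop:quintic-associated-LG}, which identifies $\Gamma(\mathbf{P}^4,\mathcal V)$, as functions on $X$, with the free $\mathbf{Z}$-module on the degree-$5$ monomials in $Y_0,\dots,Y_4$. Each such monomial is an eigenvector for the action \eqref{equation:z5z-action}, so the $(\mathbf{Z}/5\mathbf{Z})^3$-invariant submodule has as a $\mathbf{Z}$-basis exactly the invariant degree-$5$ monomials; this is the set $\Xi'$, and it is the intersection of $\Xi$ with $\overline M'$. With $\Xi'$ identified, the formulas $\Gamma'=\operatorname{Spec}\mathbf{Z}[\gamma'_m]_{m\in\Xi'}$ and $\mathbf W'=\sum_{m\in\Xi'}\gamma'_m Y^m$ are immediate from the definition of the auxiliary Landau-Ginzburg model and its base.

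Finally, for the specialization I would check---a one-line computation against the weights in \eqref{equation:z5z-action}, and precisely the point of the Greene-Plesser choice of group---that $Y_0^5,\dots,Y_4^5$ and $Y_0Y_1Y_2Y_3Y_4$ are all $(\mathbf{Z}/5\mathbf{Z})^3$-invariant, hence lie in $\Xi'$. Therefore $g$ of \eqref{equation:quintic} is already invariant, descends to $g'$, and its expansion in the basis $\{Y^m\}_{m\in\Xi'}$ has coefficient $-5\psi$ on $Y_0Y_1Y_2Y_3Y_4$, coefficient $1$ on each $Y_i^5$, and $0$ elsewhere; reading these off gives the stated map $\gamma'_m\mapsto g'_m$. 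The only step needing any care is the identification $\mathcal O_{X'}(X')=\mathcal O_X(X)^{(\mathbf{Z}/5\mathbf{Z})^3}$ and the consequent characterization of $\Xi'$; granting that, the rest follows from Proposition~\ref{prop:quintic-associated-LG} and the definitions, exactly paralleling the quintic side.
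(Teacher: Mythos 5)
Your proposal is correct and follows the only natural route: the paper's own proof of this proposition is simply ``These statements are immediate,'' and what you have written is the routine verification behind that claim --- identifying functions on $X'$ pulling back into $\Gamma(\mathbf{P}^4,\mathcal V)$ with the $(\mathbf{Z}/5\mathbf{Z})^3$-invariant part of the monomial basis from Proposition \ref{prop:quintic-associated-LG}, and checking that $Y_i^5$ and $Y_0Y_1Y_2Y_3Y_4$ are invariant so the specialization lands where it should. No gaps; your care about $\mathcal O_{X'}(X')=\mathcal O_X(X)^{(\mathbf{Z}/5\mathbf{Z})^3}$ and the eigenvector decomposition is exactly the content the paper leaves unstated.
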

\begin{proof}
These statements are immediate.
\end{proof}

\begin{thm} {\bf (quintic main)}
$\Sigma_X$ and $\Sigma_{X'}$ are dual fans, 
\begin{itemize}
\item $C(\Sigma_X), C(\Sigma_{X'})$ and $\Gamma'$ are isomorphic,
\item there is a closed immersion $\Gamma' \hookrightarrow \Gamma,$ and 
\item a closed immersion $\mathbf{A}^1_\psi \hookrightarrow C(\Sigma_X),$
\end{itemize}
such that 
the dual toric Landau-Ginzburg models of $\Sigma_X$ and $\Sigma_{X'}$ are obtained from 
the associated toric Landau-Ginzburg models of Proposition \ref{prop:quintic-associated-LG} via base change, and the specializations of
Proposition \ref{prop:quintic-associated-LG} factor though the map
 $\mathbf{A}^1_\psi \hookrightarrow C(\Sigma_X).$
\end{thm}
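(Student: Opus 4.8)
The plan is to assemble this theorem from the preceding propositions, pushing essentially all of the content into two places: pinning down the six rays on each side together with the set $\Xi'$, and verifying a single regularity condition. After that the maps are forced. First I would record the rays. By the proposition computing $\Sigma_X$, the set $\Sigma_X(1)$ consists of the six primitive vectors $(0,1)$ and $(u_{\rho_i},1)$, $0 \le i \le 4$. By the proposition computing $\Sigma_{X'}$, $\Sigma_{X'}(1)$ is the image of $\Sigma_X(1)$ under the quotient map $\overline{N}\to\overline{N}'$, and by the proposition identifying $\overline{N}'$ with $\overline{M}$ this image, viewed inside $\overline{M}$, is the set of six pairwise-distinct primitive monomials $p:=Y_0Y_1Y_2Y_3Y_4$ (the image of $(0,1)$, a section with divisor $D$) and $Y_i^5$ (the image of $(u_{\rho_i},1)$). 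A short computation of $(\mathbf{Z}/5\mathbf{Z})^3$-weights shows a degree-$5$ monomial $Y_0^{a_0}\cdots Y_4^{a_4}$ is invariant exactly when $a_1\equiv a_2\equiv a_3\equiv a_4 \pmod 5$, whose only nonnegative solutions with $\sum a_i=5$ are precisely those six monomials; hence $\Xi'$ equals the set of ray generators of $\Sigma_{X'}$. Consequently $\Sigma_X(1)$, $\Sigma_{X'}(1)$ and $\Xi'$ are in canonical bijection, so $C(\Sigma_X)$, $C(\Sigma_{X'})$ and $\Gamma'$ are all $\mathbf{A}^6$ and canonically identified.

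For the dual-fan claim I would apply the corollary \textbf{(dual fans and regularity)}: it suffices that $W(\Sigma_{X'})$ be regular on $\mathbf{X}(\Sigma_X)\times C(\Sigma_{X'})$. By the previous step $W(\Sigma_{X'}) = c_0\,p + \sum_i c_i Y_i^5$, and each degree-$5$ monomial is a global section of $\mathcal{V}=\mathcal{O}(5)$, hence a global function on $X=\operatorname{\underline{Spec}}\operatorname{Sym^\bullet}\mathcal{V}=\mathbf{X}(\Sigma_X)$; equivalently one checks via Proposition~\ref{prop:regularity} that $\langle Y^a,u_\rho\rangle\ge 0$ for the six rays $u_\rho$, the values being $1$ when $u_\rho=(0,1)$ and $a_i\ge 0$ when $u_\rho=(u_{\rho_i},1)$. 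Either way $W(\Sigma_{X'})$ is regular, so $(\Sigma_X,\Sigma_{X'})$ is a pair of dual fans (the pairings $\langle u_{\rho'},u_\rho\rangle$ that result are all $0$, $1$, or $5$).

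It remains to produce the maps. I would take $C(\Sigma_{X'})\hookrightarrow\Gamma$ to be the coordinate-subspace inclusion sending $\gamma_m$ to the coordinate of $C(\Sigma_{X'})$ indexed by the ray $m$ for $m\in\Xi'$ and to $0$ for $m\in\Xi\setminus\Xi'$; by the ray identification, base change of $\mathbf{W}=\sum_{m\in\Xi}\gamma_mY^m$ along it recovers $W(\Sigma_{X'})$ term by term. I would take $C(\Sigma_X)\to\Gamma'$ to be the coordinate isomorphism from the bijection $\Sigma_X(1)\leftrightarrow\Xi'$ of the first paragraph; since by the proposition identifying $\overline{N}'$ with $\overline{M}$ the character $\chi^{u_\rho}$ on $\mathbf{X}(\Sigma_{X'})$ is the monomial $p$ (for $u_\rho=(0,1)$) or $Y_i^5$ (for $u_\rho=(u_{\rho_i},1)$), base change of $\mathbf{W}'=\sum_{m\in\Xi'}\gamma'_mY^m$ along this isomorphism recovers $W(\Sigma_X)$. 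Composing the isomorphism $\Gamma'\cong C(\Sigma_{X'})$ with $C(\Sigma_{X'})\hookrightarrow\Gamma$ gives the asserted closed immersion $\Gamma'\hookrightarrow\Gamma$. Finally, the closed immersion $\mathbf{A}^1_\psi\hookrightarrow C(\Sigma_X)$ is the affine-linear section landing at the point with $c_{(0,1)}=-5\psi$ and $c_{(u_{\rho_i},1)}=1$; transported along $C(\Sigma_X)\cong\Gamma'$ it is exactly the specialization of Proposition~\ref{prop:quintic-quot-associated-LG}, and transported on along $\Gamma'\hookrightarrow\Gamma$ it is exactly the specialization of Proposition~\ref{prop:quintic-associated-LG}, so both specializations factor through $\mathbf{A}^1_\psi\hookrightarrow C(\Sigma_X)$ as claimed.

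I expect the only delicate point to be the chain of dualities $\overline{N}\cong M_{X'}$ and $\overline{N}'\cong M_X$: an error there — confusing $Y_i$ with $Y_i^5$, or forgetting the twist by the section $p$ — would silently spoil the term-by-term matching of $\mathbf{W}$ and $\mathbf{W}'$ with $W(\Sigma_{X'})$ and $W(\Sigma_X)$, and hence the base-change and specialization statements. Since the proposition identifying $\overline{N}'$ with $\overline{M}$ already installs precisely these identifications, the step reduces to invoking it carefully; past that, there is no genuine obstacle beyond the bookkeeping above.
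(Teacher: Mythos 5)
Your proposal is correct and follows essentially the same route as the paper: identify the six rays on each side, match them with $\Xi'$, deduce duality from regularity via Proposition~\ref{prop:regularity}, and then write down the coordinate inclusions and the $\psi$-specialization explicitly. The only (harmless) divergence is in showing $C(\Sigma_X)\to\Gamma'$ is an isomorphism: you enumerate the $(\mathbf{Z}/5\mathbf{Z})^3$-invariant degree-$5$ monomials directly, whereas the paper deduces it from the exact sequence $0\to M'\to M\to(\mathbf{Z}/5\mathbf{Z})^3\to 0$ and the matrix of exponent vectors in Equation~(\ref{equation:exponent-vectors}); these amount to the same count.
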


\begin{proof}
The map  in Equation (\ref{equation:sigma'-in-M}) produces the map $C(\Sigma_X) \hookrightarrow \Gamma'$
by sending 
\begin{itemize}
\item $c_{(u_{\rho_i}, 1)} \mapsto \gamma'_{Y_i^5}$ and
\item $c_{(0,1)} \mapsto \gamma'_{Y_0 Y_1 Y_2 Y_3 Y_4}.$
\end{itemize}
Most of these statements follow from explicitly writing the maps involved.
The closed immersion 
$\Gamma' \hookrightarrow \Gamma$  sends $\gamma_m \mapsto \gamma'_m$ if $m$ is invariant under the $(\mathbf{Z}/5\mathbf{Z})^3$ action, and  $0$ otherwise.
The isomorphism $C(\Sigma_{X'}) \to C(\Sigma_X)$ comes from the fact that the $1$-cones are in bijection under the quotient map $\Sigma_X \to \Sigma_{X'}$.

The expression
$$
c_{(\rho_0, 1)}  Y_0^5 +
c_{(\rho_1, 1)}  Y_1^5 +
c_{(\rho_2, 1)}  Y_2^5 +
c_{(\rho_3, 1)}  Y_3^5 +
c_{(\rho_4, 1)}  Y_4^5 +
c_{(0,1)} Y_0 Y_1 Y_2 Y_3 Y_4
$$
for $W(\Sigma_X)$
produces most of the rest.
In light of the identification $C(\Sigma_{X'}) \to C(\Sigma_X)$ this is also an expression for 
$W(\Sigma'),$ and this
immediately yields the base-change statements.  The regularity of these functions guarantees the fans are dual by the observation made in Proposition \ref{prop:regularity}.

The remaining question whether $C(\Sigma_X) \hookrightarrow \Gamma'$
is an isomorphism
is settled by noticing the matrix with rows from Equation 
(\ref{equation:exponent-vectors}) is the map $M' \to M$ in the exact sequence
$$
0 \to M' \to M \to (\mathbf{Z}/5\mathbf{Z})^3 \to 0.
$$
\end{proof}

\subsection{Berglund-H\"ubsch-Krawitz \cite{Berglund-Hubsch}  \cite{krawitz:2010}. }

In this case, both sides of the mirror are Landau-Ginzburg models so $\mathbf{W}$ and $\mathbf{W}'$ are 
formed by simply inserting variables for the coefficients of $W$ and $W'.$  $\Gamma$ and $\Gamma'$ 
are the affine spaces with these coefficient variables as coordinates.

\begin{defn} {\bf (polynomials and phase symmetries of a  matrix)}
The duality of Berglund-H\"ubsch \cite{Berglund-Hubsch}
is  based on invertible  matrices with nonnegative integer entries.
Given such an $(n+1) \times (n+1)$-matrix $P = (p_{ij})_{ij},$ 
one can 
define the surjective group homomorphism 
$$
\mathcal{F}(P) \colon (\mathbf{C}^\times)^{n+1} \to (\mathbf{C}^\times)^{n+1}
$$
given by the assignment 
$y_j \mapsto x_0^{p_{0j}} \dotsm x_n^{p_{nj}}.$  Abstractly,  $\mathcal{F}(-) = \operatorname{Spec} \mathbf{C}[-].$
From this we have a polynomial 
$$
W_P =  (\sum_{j=0}^n y_j) \circ \mathcal{F}(P) =  \sum_{j =0}^n \prod_{i=0}^n x_i^{p_{ij}},
$$
and a group of {\bf phase symmetries}
$$
S_P = \ker \mathcal{F}(P).
$$

The transpose matrix $P^t$ defines what is referred to as the {\bf transpose polynomial} $W_{P^t}$
and the {\bf transpose phase symmetries} $S_{P^t}.$
\end{defn}

\begin{defn} {\bf (quantum symmetries and BH mirror criterion)}
The Berglund-H\"ubsch \cite{Berglund-Hubsch} {\bf mirror criterion} is formulated in terms of 
choices of groups of {\bf quantum symmetries} $Q_P \leq S_P$ and $Q_{P^{t}} \leq S_{P^t}.$
These can be any subgroups and constitute a dual pair if the cokernels $G_P$ in 
$$
1 \to Q_P \to S_P \to G_P \to 1
$$ 
and  $G_{P^t}$ in 
$$
1 \to Q_{P^t} \to S_{P^t} \to G_{P^t} \to 1
$$
satisfy
$Q_P \cong G_{P^t}$
and 
$Q_{P^t} \cong G_P.$
Typically such a pair is presented as 
$$(W_P, Q_P), (W_{P^t}, Q_{P^t}).$$
\end{defn}

\begin{defn} {\bf (Krawitz's dual group)}
Krawitz  \cite{krawitz:2010}  discovered a way to systematically 
produce a group $Q_{P}^t \leq S_{P^t}$ dual to any given group of quantum symmetries $Q_P.$
A reformulation {\bf Krawitz's dual} by Clarke \cite{clarke-2013} first considers the diagram of character groups
\begin{equation}
\label{equation:bh-diagram}
\begin{tikzcd}
\mathbf{Z}^{n+1}  & M \arrow[swap]{l}{A} \\
\mathbf{Z}^{n+1} . \arrow{u}{P}  \arrow[swap]{ur}{B^t} 
\end{tikzcd}
\end{equation}
associated to the homomorphisms
$$
\begin{tikzcd}
(\mathbf{C}^\times)^{n+1}  \arrow{r}{\mathcal{F}(A)} \arrow[swap]{d}{\mathcal{F}(P)} & (\mathbf{C}^\times)^{n+1}/ Q_{P} \arrow{dl}{\mathcal{F}(B^t) } \\
(\mathbf{C}^\times)^{n+1} & .
\end{tikzcd}
$$
$Q_{P}^t$ is then defined to be the 
 the kernel of 
 $$\mathcal{F}(B) \colon (\mathbf{C}^{\times})^{n+1} \to U'$$
$U'= \operatorname{Spec} \mathbf{C}[N]$ for $N =  \operatorname{Hom}_{\mathbf{Z}}(M, \mathbf{Z}).$
\end{defn}

\begin{defn} {\bf (BHK mirror LG models)}
Berglund-Hubsch-Krawitz {\bf mirror pairs} arise as Landau-Ginzburg models associated to $(P, Q_P)$
and $(P^t, Q^t_P).$ 
The Landau-Ginzburg model associated  $P$ and a group of quantum symmetries $Q_P,$ 
is
$$W_P\colon \mathbf{C}^{n+1}/Q_P \to \mathbf{C}.$$ 
In the same way $P^t$ and $Q_{P}^t$ defines a Landau-Ginzburg model.  Together, these two constitute a BHK mirror pair.
\end{defn}

\begin{prop} {\bf ($\Sigma, \Sigma'$)}
The fan $\Sigma$ of $\mathbf{C}^{n+1}/Q_{P}$ is the image of the positive orthant fan 
under the map $A^t \colon \mathbf{Z}^{n+1} \to N,$ and $W_P$ is given by the image $\Sigma'$ of 
the positive orthant fan under the map $B \colon \mathbf{Z}^{n+1} \to M$ of Equation (\ref{equation:bh-diagram}).
\end{prop}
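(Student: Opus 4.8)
The plan is to identify both fans explicitly using the quotient-fan machinery of Subsection~\ref{subsection:quotient-fans}, and then to check the character expansion of the relevant potential function matches the description of $W(\Sigma')$ in the introduction. For the first fan $\Sigma$: the quotient $\mathbf{C}^{n+1}/Q_P$ is a toric variety because $Q_P \leq S_P = \ker \mathcal{F}(P)$ is a diagonalizable subgroup of the big torus $(\mathbf{C}^\times)^{n+1}$ acting on $\mathbf{C}^{n+1}$. By Cartier duality, $Q_P = \operatorname{Spec}\mathbf{C}[\operatorname{coker}(A^t)^{\vee}\text{-ish}]$; more precisely the inclusion $Q_P \hookrightarrow (\mathbf{C}^\times)^{n+1}$ is dual to a surjection of character lattices $\mathbf{Z}^{n+1} \twoheadrightarrow (\text{characters of }Q_P)$, and the sublattice $M$ of $Q_P$-invariant characters is exactly the image of $A \colon M \to \mathbf{Z}^{n+1}$ read the other way --- i.e. $M = \ker(\mathbf{Z}^{n+1} \to \widehat{Q_P})$. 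Dualizing, $N = \operatorname{Hom}(M,\mathbf{Z})$ receives the transpose map $A^t \colon \mathbf{Z}^{n+1} \to N$, and the standard description of a toric quotient (Subsection~\ref{subsection:quotient-fans}, applied with $\tilde N = \mathbf{Z}^{n+1}$, $\tilde\Sigma$ the positive-orthant fan, $G = Q_P$) says exactly that $\mathbf{X}(\Sigma) = \mathbf{C}^{n+1}/Q_P$ where $\Sigma = \{\,A^t(\sigma) : \sigma \in \tilde\Sigma\,\}$. The only thing to verify here is that the images $A^t(\sigma)$ genuinely form a fan (strong convexity and the face-intersection axioms); strong convexity amounts to $A^t$ having no kernel meeting the interior of any orthant face, which follows since $A$ came from the invertible matrix $P$ via $B^t = A \circ (\text{stuff})$ and $P$ has trivial kernel --- I would spell out that $A$ is injective, hence $A^t$ is surjective with the right rank, and the quotient-fan hypothesis of Subsection~\ref{subsection:quotient-fans} holds.

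For the second assertion, I would compute $W_P$ in the coordinates of $\mathbf{X}(\Sigma)$. By definition $W_P = \sum_{j=0}^n \prod_{i=0}^n x_i^{p_{ij}}$; the $j$-th monomial $\prod_i x_i^{p_{ij}}$ has exponent vector the $j$-th column of $P$, which is $P e_j \in \mathbf{Z}^{n+1}$. Pushing this exponent through the identification of the torus of $\mathbf{C}^{n+1}/Q_P$ with $\operatorname{Spec}\mathbf{C}[M]$ --- i.e. through the diagram (\ref{equation:bh-diagram}), where $P = A \circ B^t$ as maps out of the bottom-left $\mathbf{Z}^{n+1}$ wait, reading the diagram: $P$ and $B^t$ emanate from the lower $\mathbf{Z}^{n+1}$, with $A \circ B^t = P$ --- so the exponent $P e_j$, pulled back from $\mathbf{Z}^{n+1}$ via $A$, equals the image under $A$ of $B^t e_j$, meaning the $j$-th monomial of $W_P$ is the character $\chi^{u}$ on $\mathbf{X}(\Sigma)$ with $u = B^t e_j = B(e_j) \in M$ (here using that $B^t$ is literally the map whose transpose is $B$, per the definition of Krawitz's dual). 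Thus $W_P = \sum_{j=0}^n \chi^{B e_j}$, and $\{B e_j\}_{j=0}^n$ are precisely the primitive generators of the rays of $\Sigma' := \{\,B(\sigma) : \sigma \in \tilde\Sigma\,\}$ --- provided each $B e_j$ is primitive and the $B(\sigma)$ form a fan. So $W_P = \sum_{\rho' \in \Sigma'(1)} \chi^{u_{\rho'}}$, which is $W(\Sigma')$ with all coefficients $c_{\rho'}$ specialized to $1$; this is the sense in which "$W_P$ is given by $\Sigma'$."

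The main obstacle, and the step I would dwell on, is the primitivity and fan-well-definedness for $\Sigma'$ (and to a lesser extent $\Sigma$). For $\Sigma$ this is the quotient-fan condition of Subsection~\ref{subsection:quotient-fans}; for $\Sigma'$ one must check that $B \colon \mathbf{Z}^{n+1} \to M$ sends the positive orthant fan to a genuine fan and sends each $e_j$ to a primitive vector $u_{\rho'_j}$ generating a ray. I expect $B$ need not be surjective in general, so $\Sigma'$ lives as a fan in the finite-rank free group $M$ but its support may be a proper sub-cone; that is fine for the statement. Primitivity of $B e_j$ should follow from the construction of Krawitz's dual group: $Q_P^t = \ker \mathcal{F}(B)$ is chosen so that $B$ has no "hidden divisibility" on the standard basis --- concretely, $\mathcal{F}(B)$ being the structure map to $U' = \operatorname{Spec}\mathbf{C}[N]$ with $B$ integral forces each $e_j \mapsto B e_j$ to be part of a lattice map with $B e_j$ primitive whenever the corresponding coordinate line survives to a torus-invariant divisor in the quotient $\mathbf{C}^{n+1}/Q_P^t$. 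I would either prove this from the Smith normal form of $P$ together with the exact sequences defining $Q_P$ and $Q_P^t$, or --- if a cleaner route exists --- invoke that $P$ invertible over $\mathbf{Q}$ makes $A$ and $B$ both injective, reducing primitivity to a statement about saturation of the sublattices $\operatorname{im}(A), \operatorname{im}(B)$, which is built into the definitions of $M$ and $N$ as full character/cocharacter lattices. The compatibility $0 \le \langle m, n\rangle$ making $(\Sigma,\Sigma')$ dual fans is then automatic from the nonnegativity of all entries of $P$ (hence of $A$ and $B$), but I would note that this is really the content of the next result rather than this proposition, and here I only need that $\Sigma'$ is a fan and that $W_P = W(\Sigma')|_{c \equiv 1}$.
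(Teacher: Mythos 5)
Your proposal is correct and takes essentially the same route as the paper, whose entire proof is a one-sentence appeal to the quotient-fan construction of Subsection~\ref{subsection:quotient-fans}; you simply supply the details it leaves implicit, namely that $M$ is the lattice of $Q_P$-invariant characters and that the monomials of $W_P$ descend through the factorization $P = A\circ B^t$ to the characters $B^t e_j \in M$. One small slip: since $Q_P$ is finite, $A\colon M\to\mathbf{Z}^{n+1}$ is a finite-index inclusion and $A^t$ is therefore \emph{injective} with finite cokernel $\operatorname{Hom}(Q_P,\mathbf{C}^\times)$ rather than surjective, but injectivity of $A^t\otimes\mathbf{Q}$ is exactly what strong convexity of the image cones requires, so your argument stands (and the primitivity caveat you flag for the ray generators is a genuine subtlety that the paper's proof does not address either).
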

\begin{proof}
This is an example of the situation in which the geometric quotient is presented by the quotient fan as described in Subsection \ref{subsection:quotient-fans}. 
\end{proof}

\begin{defn} {\bf ($\mathbf{W}, \mathbf{W}', \Gamma, \Gamma'$ and $\gamma \mapsto g, \gamma' \mapsto g' $)}  For the auxiliary Landau-Ginzburg models we have
\begin{itemize}
\item $\mathbf{W}  = \sum_{j =0}^n \gamma_j \prod_{i=0}^n x_i^{p_{ij}},$ and
\item $\mathbf{W}'  = \sum_{i =0}^n \gamma'_i \prod_{j=0}^n {x'}_j^{p_{ij}}$
\end{itemize}
where 
\begin{itemize}
\item $\Gamma = \operatorname{Spec} \mathbf{Z}[\gamma_j]_j,$ and
\item $\Gamma' = \operatorname{Spec} \mathbf{Z}[\gamma'_i]_i.$
\end{itemize}
The specializations to recover the BHK mirrors set all the $\gamma$ and $\gamma'$ variables to $1$.
\end{defn}

\begin{thm} {\bf (BHK main)}
The fans $(\Sigma, \Sigma')$ are dual with 
$$
p_{ij} = u_{\rho'_j}(u_{\rho_i}),
$$ 
there are isomorphisms
$C(\Sigma') = \Gamma,$
$C(\Sigma) = \Gamma',$
$W(\Sigma') = \mathbf{W},$ $W(\Sigma) = \mathbf{W}'$
and $W_P$ and $W_{P^t}$ are obtained from these via base change.
\end{thm}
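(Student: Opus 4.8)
The plan is to reduce the whole statement to bookkeeping with the character lattices in diagram~(\ref{equation:bh-diagram}); the one nontrivial input is the commutativity of that triangle, i.e.\ the identity $P=A\circ B^{t}$ on character groups, together with its transpose $P^{t}=B\circ A^{t}$. First I would pin down the rays. Because $\det P\neq 0$, the groups $S_{P}$, $Q_{P}$, $S_{P^{t}}$, $Q_{P}^{t}$ are finite, so $\mathbf{C}^{n+1}/Q_{P}$ and $\mathbf{C}^{n+1}/Q_{P}^{t}$ are toric quotients and the preceding proposition applies: $\Sigma=\Sigma_{X}$ is the image of the positive-orthant fan under $A^{t}\colon\mathbf{Z}^{n+1}\to N$, with rays $\rho_{0},\dots,\rho_{n}$ and $u_{\rho_{i}}=A^{t}(e_{i})$. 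Since $\det P\neq 0$ forces $\det A\neq 0$ (so $A^{t}$ is injective) and $\det B^{t}\neq 0$, these $n+1$ rays are distinct and $|\Sigma|=\operatorname{Cone}(u_{\rho_{0}},\dots,u_{\rho_{n}})$ --- hence each of its subcones, which are the cones of $\Sigma$ --- is strongly convex. Applying the same discussion to $(P^{t},Q_{P}^{t})$, and using that $Q_{P}^{t}=\ker\mathcal{F}(B)$ where $\mathcal{F}(B)$, having finite kernel and being surjective (both because $\det B=\det B^{t}\neq 0$), identifies $(\mathbf{C}^{\times})^{n+1}/Q_{P}^{t}$ with $\operatorname{Spec}\mathbf{C}[N]$, one gets $X'=\mathbf{C}^{n+1}/Q_{P}^{t}=\mathbf{X}(\Sigma')$ for $\Sigma'=\Sigma_{X'}\subset M$ the image of the positive-orthant fan under $B^{t}\colon\mathbf{Z}^{n+1}\to M$, with rays $\rho'_{0},\dots,\rho'_{n}$ and $u_{\rho'_{j}}=B^{t}(e_{j})$ (again distinct and spanning strongly convex cones, by injectivity of $B^{t}$); this is also the fan carrying the monomials of $W_{P}$, since its $j$-th monomial $\prod_{i}x_{i}^{p_{ij}}$ is the character $\chi^{A(B^{t}(e_{j}))}=\chi^{P(e_{j})}$ on $\mathbf{X}(\Sigma)=\mathbf{C}^{n+1}/Q_{P}$.

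With the rays in hand, the pairing is immediate: by the definition of the transpose $A^{t}$ and then $P=A\circ B^{t}$, $u_{\rho'_{j}}(u_{\rho_{i}})=\bigl(A^{t}(e_{i})\bigr)\bigl(B^{t}(e_{j})\bigr)=e_{i}\bigl(A(B^{t}(e_{j}))\bigr)=e_{i}\bigl(P(e_{j})\bigr)=p_{ij}$. Since every $p_{ij}$ is a nonnegative integer, bilinearity gives $\langle m,n\rangle\ge 0$ for all $m\in|\Sigma'|$ and $n\in|\Sigma|$; as $M$ and $N$ are dual finite-rank free abelian groups and all the cones are strongly convex rational polyhedral, $(\Sigma,\Sigma')$ is a pair of dual fans with $p_{ij}=u_{\rho'_{j}}(u_{\rho_{i}})$, as asserted.

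It remains to match the coefficient spaces and the functions. Both $C(\Sigma')=\operatorname{Spec}\mathbf{Z}[c_{\rho'}]_{\rho'\in\Sigma'(1)}$ and $\Gamma=\operatorname{Spec}\mathbf{Z}[\gamma_{j}]_{j=0}^{n}$ are affine spaces on $n+1$ coordinates, and $c_{\rho'_{j}}\mapsto\gamma_{j}$ is the claimed isomorphism $C(\Sigma')=\Gamma$; symmetrically $c_{\rho_{i}}\mapsto\gamma'_{i}$ gives $C(\Sigma)=\Gamma'$. Under the first isomorphism $W(\Sigma')=\sum_{j}c_{\rho'_{j}}\chi^{u_{\rho'_{j}}}$ becomes $\sum_{j}\gamma_{j}\chi^{u_{\rho'_{j}}}$, and since $\chi^{u_{\rho'_{j}}}$ pulls back along the quotient $\mathcal{F}(A)$ to the monomial $x^{A(B^{t}(e_{j}))}=\prod_{i}x_{i}^{p_{ij}}$, this is exactly $\mathbf{W}$; symmetrically, pulling $\chi^{u_{\rho_{i}}}$ back along $\mathcal{F}(B)$ gives $\prod_{j}{x'}_{j}^{(P^{t}(e_{i}))_{j}}=\prod_{j}{x'}_{j}^{p_{ij}}$, so $W(\Sigma)=\mathbf{W}'$. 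Finally, the specializations recovering the BHK pair set every $\gamma_{j}$ and every $\gamma'_{i}$ equal to $1$, which is precisely base change of $\mathbf{W}=W(\Sigma')$ and $\mathbf{W}'=W(\Sigma)$ along the point of $\Gamma$, respectively $\Gamma'$, at which every coordinate equals $1$; hence $W_{P}$ and $W_{P^{t}}$ arise from $W(\Sigma')$ and $W(\Sigma)$ by base change. I expect the main obstacle to be keeping the many lattice identifications coherent --- in particular checking that the pulled-back characters come out as $\prod_{i}x_{i}^{p_{ij}}$ with the indices in the stated positions rather than a transpose of it; once $P=A\circ B^{t}$ and $P^{t}=B\circ A^{t}$ are available, the rest is formal.
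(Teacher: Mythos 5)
Your proposal is correct and follows exactly the route the paper intends: the paper's own proof of this theorem is the single sentence ``These statements are immediate from the definition and considerations above,'' and your computation via $P=A\circ B^{t}$, the ray identifications $u_{\rho_i}=A^{t}(e_i)$, $u_{\rho'_j}=B^{t}(e_j)$, and the pairing $\langle B^{t}(e_j),A^{t}(e_i)\rangle=p_{ij}$ is precisely the omitted verification. The only point you (like the paper) leave implicit is that $A^{t}(e_i)$ and $B^{t}(e_j)$ are in fact the \emph{primitive} generators of their rays, which is the standard no-quasi-reflection hypothesis in the Berglund--H\"ubsch--Krawitz setting.
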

\begin{proof}
These statements are immediate from the definition and considerations above.
\end{proof}

\newpage
\subsection{Batyrev-Borisov  \cite{Batyrev-Borisov-Cones}}

Some work is required to explicitly extract the mirror complete intersections in this construction.   $\Gamma$ and $\Gamma'$ are the affine spaces of global sections, and $\mathbf{W}$ and $\mathbf{W}'$ are the universal sections considered as functions.

\subsubsection{Gorenstein cones and splittings.}

Batyrev-Borisov mirror symmetry  \cite{Batyrev-Borisov-Cones} is centered on 
 {\bf complete splittings} of {\bf reflexive Gorenstein cones}.
 
\begin{defn} 
A full dimensional cone $K$ in a the reification $\overline{M}_\mathbf{R}$ of a finite rank free abelian group $\overline{M}$
is called {\bf Gorenstein} if there is a homomorphism $\ell^\vee \colon K \cap \overline{M} \to \mathbf{N}$
such that $K\cap \overline{M}$ is generated as a monoid by $(\ell^\vee)^{-1}({\{1\}}).$
\end{defn}

\begin{defn}
A Gorenstein cone $K$ is called {\bf reflexive} if the dual cone $K^\vee$
is also Gorenstein.  The number $\ell^\vee (\ell) = r$ is called the {\bf index} of $K$.
\end{defn}
\begin{defn} 
A {\bf complete splitting} of an index $r$ reflexive Gorenstein cone $K \subseteq \overline{M}$
is a set of non-zero elements $E = \{ e_1, \dotsc, e_r \}  \subseteq K \cap \overline{M}$ such that 
$$
\ell = e_1 + \cdots + e_r.
$$
\end{defn}

\subsubsection{Dual splittings, support  and partitions}

If a Gorenstein cone has a  splitting, then  so does its dual.  The proof of this uses the notions of 
both the {\bf support} of a Gorenstein cone and  the {\bf support partition} given by a complete splitting.

\begin{defn}
A {\bf  dual complete splitting} $E^\vee$ to a given complete splitting $E$ is a complete splitting  
$E^\vee = \{
e^\vee_1, \dotsc, e^\vee_r
\}$ of $K^\vee.$ 
\end{defn}

\begin{rem} {\bf (non-uniqueness of splittings)}
A splitting  need not be unique.  Indeed, 
Batyrev-Nill  give a simple explicit example where it is not \cite[Example 5.1]{batyrev-nill}. 
However, if $r=1$ it's clear that there is only one ``splitting,''  $e_1 = \ell,$
so uniqueness is guaranteed.  
\end{rem}

\begin{defn} 
Associated to a Gorenstein cone $K$ is a convex polyhedral set 
$$
\tilde{\Delta} = \{ k \in K \ | \ \ell^\vee(k) = 1 \in \mathbf{R} \}.
$$
called the {\bf support} of $K.$  We will denote the support of $K^\vee$
by $\tilde{\nabla}.$  
\end{defn}
\begin{defn} {\bf (support partition)}
 A complete splitting $E^\vee$ determines sets
$$\tilde{\Delta}_i = \{ k \in K \ | e^\vee_i(k) =1  \text{ and } e^\vee_j(k) =0
\text{ for all }  j \neq i \} \subseteq \tilde{\Delta}.$$
\end{defn}
\begin{prop} {\bf (support partitions partition)}
\label{proposition:part-part}
The sets $\tilde{\Delta}_i$ partition the integral elements of $\tilde{\Delta}$, 
each  $\tilde{\Delta}_i$ is nonempty and is the convex hull of vertices of $\tilde{\Delta}$
it contains.
\end{prop}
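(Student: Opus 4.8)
The plan is to pass from the Gorenstein cone $K$ to its base polytope $\tilde{\Delta}$, regard each member $e_i^\vee$ of the complete splitting of $K^\vee$ as a non-negative linear functional on $\tilde{\Delta}$, and then read off all three assertions from elementary polytope combinatorics. I would first record the basic structure. Writing $\overline{N}=\operatorname{Hom}_{\mathbf{Z}}(\overline{M},\mathbf{Z})$, note that by reflexivity $K^\vee$ is full dimensional, so $K$ is strongly convex, and the degree functional $\ell^\vee$ of $K$ — which lies in $K^\vee\cap\overline{N}$ since $\ell^\vee\geq 0$ on $K$ — in fact lies in the interior of $K^\vee$: were $\ell^\vee$ to vanish at a nonzero point of $K$ it would vanish at a nonzero lattice point of $K$, as $K$ is rational, contradicting that every nonzero element of $K\cap\overline{M}$ has positive degree. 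Hence the slice $\tilde{\Delta}=\{k\in K\mid\ell^\vee(k)=1\}$ is a bounded polytope; its vertices are exactly the points where the rays of $K$ meet $\{\ell^\vee=1\}$; and its lattice points $(\ell^\vee)^{-1}(1)\cap\overline{M}$ generate the monoid $K\cap\overline{M}$ and therefore span $\overline{M}_{\mathbf{R}}$. Recall also that $\sum_i e_i^\vee=\ell^\vee$ — the content of $E^\vee$ being a complete splitting of $K^\vee$ — and that each $e_i^\vee$ is a nonzero element of $K^\vee\cap\overline{N}$.

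Next I would prove the partition assertion. If $k$ is a lattice point of $\tilde{\Delta}$, the numbers $e_i^\vee(k)$ are non-negative integers (as $e_i^\vee\in K^\vee\cap\overline{N}$ and $k\in K\cap\overline{M}$) summing to $\ell^\vee(k)=1$; hence exactly one of them equals $1$ and the rest vanish, so $k$ lies in $\tilde{\Delta}_i$ for precisely one $i$. The inclusion $\tilde{\Delta}_i\subseteq\tilde{\Delta}$ is automatic, since the defining conditions of $\tilde{\Delta}_i$ force $\ell^\vee(k)=\sum_j e_j^\vee(k)=1$; and $\tilde{\Delta}_i\cap\tilde{\Delta}_j=\emptyset$ for $i\neq j$ because the condition $e_i^\vee(k)=1$ of the one contradicts the condition $e_i^\vee(k)=0$ of the other. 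Thus the $\tilde{\Delta}_i$ partition $\tilde{\Delta}\cap\overline{M}$.

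Then I would deduce nonemptiness and the convex-hull description. If some $\tilde{\Delta}_{i_0}$ were empty, the partition would place every lattice point $k$ of $\tilde{\Delta}$ into some $\tilde{\Delta}_i$ with $i\neq i_0$, whose defining conditions include $e_{i_0}^\vee(k)=0$; then $e_{i_0}^\vee$ would vanish on a spanning subset of $\overline{M}_{\mathbf{R}}$, forcing $e_{i_0}^\vee=0$ and contradicting that $e_{i_0}^\vee$ is a nonzero member of the splitting. So each $\tilde{\Delta}_i$ is nonempty, and indeed contains a lattice point. For the last clause, observe that on $\tilde{\Delta}$ the identity $e_i^\vee+\sum_{j\neq i}e_j^\vee=\ell^\vee\equiv 1$ gives $\tilde{\Delta}_i=\{k\in\tilde{\Delta}\mid(\sum_{j\neq i}e_j^\vee)(k)=0\}$; since $\sum_{j\neq i}e_j^\vee\in K^\vee$ is non-negative on $\tilde{\Delta}$, this realizes the nonempty set $\tilde{\Delta}_i$ as the face of the polytope $\tilde{\Delta}$ on which the linear functional $\sum_{j\neq i}e_j^\vee$ attains its minimum, and a nonempty face of a polytope is the convex hull of the vertices of the polytope lying in it.

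I do not anticipate a serious obstacle. The one point demanding care is the bookkeeping relating the cones $K$ and $K^\vee$, their degree functionals, and the polytope $\tilde{\Delta}$ — in particular establishing, via reflexivity, that $\tilde{\Delta}$ is a genuine bounded polytope with vertices indexed by the rays of $K$ and with lattice points spanning $\overline{M}_{\mathbf{R}}$, since the nonemptiness argument rests squarely on that spanning property. Everything past that point is standard polytope combinatorics.
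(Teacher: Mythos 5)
Your proof is correct, and the partition step is identical to the paper's (non-negative integers $e_i^\vee(k)$ summing to $\ell^\vee(k)=1$). Where you genuinely diverge is in the nonemptiness argument: the paper writes $\ell = p_1 + \cdots + p_r$ with $p_j \in \tilde{\Delta}\cap\overline{M}$ (using that $K\cap\overline{M}$ is generated in degree one) and derives a contradiction from $e_i^\vee(\ell)=0$, whereas you observe that emptiness of $\tilde{\Delta}_{i_0}$ would force $e_{i_0}^\vee$ to vanish on the lattice points of $\tilde{\Delta}$, which span $\overline{M}_{\mathbf{R}}$ since $K$ is full dimensional, hence $e_{i_0}^\vee=0$. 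Your route is cleaner and needs less structure (it does not use $e_i^\vee(\ell)>0$), but the paper's version is doing extra work on purpose: its argument simultaneously proves that each $\tilde{\Delta}_i$ contains exactly one summand in \emph{any} expression $\ell=p_1+\cdots+p_r$, which is recorded as the ``summand partition'' corollary and reused in the proof that dual splittings exist; your argument does not yield that by-product, so if you adopted it you would still need the paper's decomposition of $\ell$ elsewhere. For the final clause, your realization of $\tilde{\Delta}_i$ as the face of $\tilde{\Delta}$ on which the non-negative functional $\sum_{j\neq i}e_j^\vee$ attains its minimum $0$ (legitimate only after nonemptiness is known, and you correctly order the steps) is a tidier packaging of the paper's extreme-subset argument, and immediately gives that $\tilde{\Delta}_i$ is the convex hull of the vertices of $\tilde{\Delta}$ it contains. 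The only point to keep an eye on is your preliminary claim that $\ell^\vee$ is interior to $K^\vee$, which tacitly uses that a nonzero face of the rational cone $K$ contains a nonzero lattice point; that is fine in this setting but worth saying.
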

\begin{proof}
The partition assertion follows from the fact that any integral point $p$ in $\tilde{\Delta}$ has 
$\ell^\vee(p) = \sum_i e^\vee(p) = 1$ and $e_i^\vee(p) \in \mathbf{N}.$  This means that
$e_i^\vee(p) = 1$ for exactly one $i$ and zero for all others, i.e. $p \in \tilde{\Delta}_i.$

Nonemptyness can be seen by considering $\ell \in K$ and the fact that
$
\ell = p_1 + \cdots + p_r
$  
for not necessarily distinct $p_j \in \tilde{\Delta}.$  If $\tilde{\Delta}_i = \varnothing$
then $e_i^\vee(p_j) = 0$ for all $j,$ and consequently $\ell(e_i^\vee) = 0.$  This is a contradiction.
In fact, we have shown that each $\tilde{\Delta}_i$ contains  exactly one $p_j$ in any expression
of $\ell$ as a sum of element from $\tilde{\Delta},$ since any other number would lead to $\ell(e_i^\vee) \neq 1.$

The final claim, that each vertex of  $\tilde{\Delta}_i$ is a vertex of  $\tilde{\Delta},$ is an immediate
consequence of the fact that a point in  $\tilde{\Delta}_i$ cannot  be written as a convex combination 
of points in $\tilde{\Delta}$ if there is a non-zero term whose point $p$ has $e^\vee_i(p) = 0.$
\end{proof}
\begin{cor} {\bf (summand partition)}
\label{corollary:sum-partiton}
For an index $r$ reflexive Gorenstein cone, each $\tilde{\Delta}_i$ contains  exactly one $p_j$ in any expression
of 
$$\ell = p_1 + \cdots + p_r$$
as a sum of non-zero lattice points in $K.$
\end{cor}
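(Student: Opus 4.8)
The plan is to reduce the corollary to Proposition~\ref{proposition:part-part}: its proof already contains the needed conclusion for expressions of $\ell$ as a sum of integral points of $\tilde\Delta$, so the only new content is to check that a decomposition of $\ell$ into \emph{nonzero lattice points of $K$} is automatically a decomposition into points of $\tilde\Delta$.

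First I would record the basic positivity property of the Gorenstein functional: for $p \in K \cap \overline M$ one has $\ell^\vee(p) \ge 1$ unless $p = 0$. This is immediate from the definition of a Gorenstein cone, since $K \cap \overline M$ is generated as a monoid by $(\ell^\vee)^{-1}(\{1\})$, so a nonzero $p$ is a nonempty sum of degree-one generators and hence has $\ell^\vee(p)$ equal to the (positive) number of summands. In particular $K$ is strongly convex, a line through the origin being ruled out by the existence of lattice points of negative degree.

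Next, suppose $\ell = p_1 + \cdots + p_r$ with each $p_j \in K \cap \overline M$ nonzero. Applying $\ell^\vee$ and using $\ell^\vee(\ell) = r$ (the definition of the index) gives $\sum_{j=1}^r \ell^\vee(p_j) = r$. By the previous paragraph each term is a positive integer, and there are exactly $r$ of them, so $\ell^\vee(p_j) = 1$ for every $j$; that is, $p_j \in \tilde\Delta$ for all $j$. Now I would invoke Proposition~\ref{proposition:part-part}: its proof shows that in any expression of $\ell$ as a sum of $r$ integral points of $\tilde\Delta$ — precisely the situation we have arrived at — each $\tilde\Delta_i$ contains exactly one of the summands, which is the corollary.

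I do not expect a genuine obstacle here: the whole argument is the degree count of the middle step, which collapses the statement onto the already-established proposition. The only point that deserves care is the strict positivity of $\ell^\vee$ on nonzero lattice points of $K$, since that is the one place where the defining generation property of a Gorenstein cone is actually used.
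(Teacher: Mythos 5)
Your proof is correct and matches the paper's (largely implicit) argument: the paper proves the key claim inside Proposition \ref{proposition:part-part} for summands drawn from $\tilde\Delta$, and the corollary is meant to follow by exactly your degree count, using $\ell^\vee(\ell)=r$ and $\ell^\vee\ge 1$ on nonzero lattice points of $K$ (via the Gorenstein generation property) to force every summand into $\tilde\Delta$. No gaps; your write-up just makes explicit the reduction step the paper leaves unstated.
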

\begin{prop} {\bf (dual splittings exist)}
If $K$ admits a complete splitting, then so does $K^\vee.$
\end{prop}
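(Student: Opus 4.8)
The plan is to prove something slightly stronger than asked: that the grading functional $\ell^\vee$ of $K$, regarded as an element of $K^\vee$, already decomposes into exactly $r$ degree-one lattice points, which is precisely the data of a complete splitting of $K^\vee$. In fact the given splitting $E$ of $K$ is irrelevant to \emph{existence}; it enters only to label the resulting pieces, so that $E^\vee$ becomes a dual splitting to $E$ in the sense intended.

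First I would record that the hypotheses are self-dual. Since $K$ is a full dimensional rational polyhedral cone, biduality gives $(K^\vee)^\vee = K$, so $K^\vee$ is again a reflexive Gorenstein cone, its dual being the Gorenstein cone $K$; its index is $\ell(\ell^\vee) = \ell^\vee(\ell) = r$ and its grading homomorphism is $\ell$. Consequently the support of $K^\vee$ is $\tilde{\nabla} = \{ k \in K^\vee \mid \ell(k) = 1\}$, and the integral points of $\tilde{\nabla}$ are exactly the nonzero lattice points of $K^\vee$ of $\ell$-degree one.

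Next I would observe that the grading functional $\ell^\vee \colon \overline{M} \to \mathbf{Z}$ of $K$, viewed as an element of the dual lattice $\overline{N} = \operatorname{Hom}_{\mathbf{Z}}(\overline{M}, \mathbf{Z})$, lies in $K^\vee \cap \overline{N}$ — it is nonnegative on $K$ by the definition of a grading — and has $\ell$-degree $\ell(\ell^\vee) = r$. Since $K^\vee$ is Gorenstein, $K^\vee \cap \overline{N}$ is generated as a monoid by its degree-one elements, so I may write $\ell^\vee = f_1 + \cdots + f_s$ with each $f_j$ a nonzero integral point of $\tilde{\nabla}$. Taking $\ell$-degrees forces $s = \sum_j \ell(f_j) = \ell(\ell^\vee) = r$. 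Hence $E^\vee := \{ f_1, \dots, f_r\}$ consists of $r$ nonzero lattice points of $K^\vee$ with $f_1 + \cdots + f_r = \ell^\vee$, i.e. it is a complete splitting of $K^\vee$.

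Finally, to exhibit $E^\vee$ as a dual splitting to the given $E = \{e_1, \dots, e_r\}$, I would apply Corollary~\ref{corollary:sum-partiton} with the roles of $K$ and $K^\vee$ exchanged — both Proposition~\ref{proposition:part-part} and its corollary are symmetric under this exchange — so that the support partition of $\tilde{\nabla}$ determined by $E$ contains exactly one of the summands $f_j$ in each part; relabeling the $f_j$ accordingly arranges that $f_j$ lies in the part of $\tilde{\nabla}$ cut out by $e_j$, for every $j$. I do not expect a serious obstacle anywhere: the only delicate points are the degree count that pins the number of summands of $\ell^\vee$ to exactly $r$, and the routine verification that the support-partition statements transport from $K$ to $K^\vee$ so that the corollary may be applied there.
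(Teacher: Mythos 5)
Your argument is essentially the paper's: both decompose $\ell^\vee$ into nonzero lattice points of $\tilde{\nabla}$ using the Gorenstein property of $K^\vee$, pin the number of summands to $r$ by the degree count $\ell(\ell^\vee)=r$, and invoke the support-partition statements with the roles of $K$ and $K^\vee$ exchanged to organize the summands relative to $E$. Your write-up just makes the degree count and the self-duality of the hypotheses more explicit than the paper does.
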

\begin{proof}
The proof is essentially a restatement of the middle of the proof of Proposition \ref{proposition:part-part}.
If we fix a splitting $E,$ the integral points of  $\tilde{\nabla}$ generate $K^\vee \cap \overline{N}$  
and are partitioned by the $ \tilde{\nabla}_i$'s.
So  $\ell^\vee \in \tilde{\nabla}_1 + \cdots +  \tilde{\nabla}_r$
and  any choice $\ell^\vee = e_1^\vee + \dotsm + e_r^\vee$ gives a complete splitting of $K^\vee.$
\end{proof}

\subsubsection{Dual splittings, complete intersections, and corresponding LG's}

\begin{defn} {\bf (the base toric variety $Y$)}
\label{definition:base-toric}
Given dual complete splittings $E$ and $E^\vee,$ define
$$
M = \{ m \in \overline{M} \ | \ e^\vee_j(n) = 0 \ \  \forall e^\vee_j \in E^\vee \}. 
$$
Within $M_\mathbf{R}$ we have the convex polyhedral set 
$$
\Delta_i = \{ m \in M_\mathbf{R} \ | \ 
m + e_i  \in \tilde{\Delta}_i
\}
$$
and the Minkowski sum
$$
\Delta = \sum_i \Delta_i.
$$
Denote by $\Sigma_Y \subseteq N = \operatorname{Hom}_\mathbf{Z}(M, \mathbf{Z})$ the normal fan of $\Delta,$ and write $Y$ for the 
toric variety of this fan.
\end{defn}

\begin{prop}{\bf (polar=convex) \cite[Proposition 3.18]{batyrev-nill}}
\label{prop:batyrev-nill}
The polar polytope $\Delta^*$ of $\Delta$ is the convex hull of the $\nabla_j$'s:
$$
\Delta^* = \operatorname{conv}(\{ \nabla_j \}_j).
$$
\end{prop}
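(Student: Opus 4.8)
The plan is to prove the two inclusions $\operatorname{conv}(\{\nabla_j\}_j)\subseteq\Delta^*$ and $\Delta^*\subseteq\operatorname{conv}(\{\nabla_j\}_j)$ separately, using the support-partition combinatorics of Proposition \ref{proposition:part-part} and Corollary \ref{corollary:sum-partiton} to pass between the polytopes $\Delta,\nabla$ and their Gorenstein cones $K,K^\vee$. Throughout I work in the symmetric set-up: $\nabla_j\subseteq N_\mathbf{R}$ is defined exactly as the $\Delta_i$ of Definition \ref{definition:base-toric} were, with the roles of $E$ and $E^\vee$ interchanged, so $\tilde{\Delta}_i=e_i+\Delta_i$ and $\tilde{\nabla}_j=e^\vee_j+\nabla_j$. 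The first step is to normalize coordinates. Each $e_i$ is a nonzero lattice point of $K$, hence $\ell^\vee(e_i)\geq 1$; since $\sum_i\ell^\vee(e_i)=\ell^\vee(\ell)=r$ this forces $\ell^\vee(e_i)=1$, so $e_i\in\tilde{\Delta}$, and by Proposition \ref{proposition:part-part} it lies in a unique $\tilde{\Delta}_{\tau(i)}$, while by Corollary \ref{corollary:sum-partiton} the map $\tau$ is a bijection. Relabelling $E^\vee$ so $\tau=\mathrm{id}$, I get $\langle e^\vee_j,e_i\rangle=\delta_{ij}$, hence orthogonal splittings $\overline{M}_\mathbf{R}=M_\mathbf{R}\oplus\operatorname{span}_\mathbf{R}E$ and $\overline{N}_\mathbf{R}=N_\mathbf{R}\oplus\operatorname{span}_\mathbf{R}E^\vee$ with $M_\mathbf{R}\perp\operatorname{span}_\mathbf{R}E^\vee$, $N_\mathbf{R}\perp\operatorname{span}_\mathbf{R}E$, and $0\in\Delta_i$ for every $i$. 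I also record, from Proposition \ref{proposition:part-part} together with the fact that the support of a Gorenstein cone is a lattice polytope, that $\tilde{\Delta}=\operatorname{conv}(\bigcup_i\tilde{\Delta}_i)$ and $\tilde{\nabla}=\operatorname{conv}(\bigcup_j\tilde{\nabla}_j)$; in particular $K=\mathbf{R}_{\geq 0}\tilde{\Delta}$ is generated by the rays through the points $e_i+m$ with $m\in\Delta_i$, and dually for $K^\vee$.

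\textbf{The easy inclusion.} Fix $n_j\in\nabla_j$, so $e^\vee_j+n_j\in\tilde{\nabla}_j\subseteq K^\vee$, and let $m=\sum_i m_i\in\Delta$ with $m_i\in\Delta_i$, so $e_i+m_i\in K$. Pairing and using the orthogonality relations, $0\leq\langle e^\vee_j+n_j,\,e_i+m_i\rangle=\delta_{ij}+\langle n_j,m_i\rangle$; summing over $i$ gives $\langle n_j,m\rangle\geq-1$, i.e.\ $n_j\in\Delta^*$. Since $\Delta^*$ is convex, $\operatorname{conv}(\{\nabla_j\}_j)\subseteq\Delta^*$.

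\textbf{The reverse inclusion.} This is the substantive part, and I would reduce it to the claim that every vertex of $\Delta^*$ lies in some $\nabla_j$. Let $v$ be a vertex of $\Delta^*$; since $0\in\operatorname{int}\Delta$ it is the inner normal of a facet $F$ of $\Delta$ with $\langle v,\cdot\rangle\geq-1$ on $\Delta$ and $=-1$ on $F$, and $F=\sum_i F_i$ where $F_i=\{m\in\Delta_i:\langle v,m\rangle=-h_i\}$, $h_i:=-\min_{\Delta_i}\langle v,\cdot\rangle\geq 0$, $\sum_i h_i=1$. Put $y:=v+\sum_i h_i e^\vee_i\in\overline{N}_\mathbf{R}$ and $\mathcal{F}:=\operatorname{conv}(\bigcup_i(e_i+F_i))$. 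Using the orthogonality relations again, $\langle y,\,e_i+m_i\rangle=h_i+\langle v,m_i\rangle\geq 0$ with equality exactly when $m_i\in F_i$, so $y\in K^\vee$ and $\{x\in K:\langle y,x\rangle=0\}=\mathbf{R}_{\geq 0}\mathcal{F}$. The Cayley-polytope dimension formula gives $\dim\mathcal{F}=(r-1)+\dim F=(r-1)+(\dim M-1)=\dim\tilde{\Delta}-1$, so $\mathbf{R}_{\geq 0}\mathcal{F}$ is a facet of $K$ and hence $\mathbf{R}_{\geq 0}y$ is an extremal ray of $K^\vee$. That ray meets $\tilde{\nabla}$ in a vertex $w=cy$ (some $c>0$), which by Proposition \ref{proposition:part-part} applied to $K^\vee$ lies in a unique $\tilde{\nabla}_j$, so $w=e^\vee_j+v'$ with $v'$ a vertex of $\nabla_j$. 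Comparing $\operatorname{span}_\mathbf{R}E^\vee$-components, $c\sum_i h_i e^\vee_i=e^\vee_j$, so $c h_i=\delta_{ij}$ for all $i$; with $\sum_i h_i=1$ this forces $c=1$, $h_j=1$, and $h_i=0$ for $i\neq j$; comparing $N_\mathbf{R}$-components gives $v=v'$. Thus $v\in\nabla_j$, and since $\operatorname{conv}(\{\nabla_j\}_j)$ is convex and contains all vertices of $\Delta^*$, it contains $\Delta^*$. Combining with the easy inclusion proves the proposition.

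\textbf{Main obstacle.} I expect the delicate point to be getting the dictionary in the last paragraph exactly right: verifying that $\mathcal{F}$ really is a facet of $\tilde{\Delta}$ (the Cayley dimension count, which rests on $F=\sum_i F_i$ being a facet of $\Delta$, i.e.\ $\dim F=\dim M-1$), and that the generator of the dual extremal ray lands at an actual vertex of $\tilde{\nabla}$ so that Proposition \ref{proposition:part-part} for $K^\vee$ applies rather than to a multiple. Once that correspondence is in place, the decisive fact — that an extremal ray of $K^\vee$ involves only one $e^\vee_j$ in its $\operatorname{span}_\mathbf{R}E^\vee$-component — is exactly the content of Proposition \ref{proposition:part-part} for $K^\vee$, and the remainder is bookkeeping. (Alternatively the statement may simply be quoted from \cite[Proposition 3.18]{batyrev-nill}, but the argument above stays inside the support-partition formalism already developed here.)
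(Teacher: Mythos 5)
Your proof is correct, but the comparison here is a little unusual: the paper offers no proof of this proposition at all — it is imported verbatim from Batyrev--Nill \cite[Proposition 3.18]{batyrev-nill} — so your argument is genuinely additional content rather than a variant of the paper's. What you have done is rederive the statement entirely inside the support-partition formalism the paper sets up (Proposition \ref{proposition:part-part} and Corollary \ref{corollary:sum-partiton}), and the derivation checks out: the normalization $\langle e^\vee_j,e_i\rangle=\delta_{ij}$ after relabelling is a correct consequence of $\ell^\vee(e_i)=1$ plus the summand partition; the easy inclusion is a clean pairing computation; and the vertex-of-$\Delta^*$ $\leftrightarrow$ facet-of-$\Delta$ $\leftrightarrow$ facet-of-$K$ $\leftrightarrow$ extremal-ray-of-$K^\vee$ chain, together with the $E^\vee$-component comparison forcing $c=1$ and $h_i=\delta_{ij}$, is exactly the right mechanism. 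Two points that you assert and that deserve a line each, since both are where reflexivity actually enters: first, $0\in\operatorname{int}\Delta$ (needed for $\Delta^*$ to be a polytope and for the vertex--facet correspondence) follows because any $v\in N_{\mathbf{R}}$ with $\langle v,\cdot\rangle\geq 0$ on $\Delta$ lies in $K^\vee$ and pairs to zero with $\ell$, hence is $0$ by strong convexity; second, the extremal ray $\mathbf{R}_{\geq 0}y$ meets $\tilde{\nabla}$ in a \emph{lattice} point because the primitive generator $u$ of an extremal ray of the Gorenstein cone $K^\vee$ cannot be a sum of two or more level-one lattice points unless it itself has $\ell(u)=1$ — this is precisely what lets you invoke Proposition \ref{proposition:part-part} for $K^\vee$. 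With those two remarks added, your argument is a complete, self-contained proof of what the paper only cites, which is arguably an improvement for a reader who does not have Batyrev--Nill at hand.
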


\begin{defn} 
\label{definition:divisor-tau}
The proposition above and the integrality of the vertices of the $\nabla_j$'s allow us to define for each $e_i \in E$ a {\bf divisor}
$$
D_i =  \sum_{u_\rho \in \nabla_i}  D_\rho
$$ 
on $Y.$ 
\end{defn}

\begin{thm} {\bf (Cartier-ity)}
$D_i$ is Cartier with global section polytope $\Delta_i.$
\end{thm}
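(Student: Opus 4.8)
The plan is to verify the two assertions — that $D_i$ is Cartier and that its polytope of global sections is $\Delta_i$ — by working cone by cone in the normal fan $\Sigma_Y$ of $\Delta$, using the combinatorics set up in Proposition \ref{proposition:part-part} and Corollary \ref{corollary:sum-partiton}, together with the identification of $\Delta^*$ with $\operatorname{conv}(\{\nabla_j\}_j)$ from Proposition \ref{prop:batyrev-nill}. First I would recall the standard dictionary: for the normal fan of a full-dimensional lattice polytope $\Delta = \Delta_1 + \dots + \Delta_r$, a maximal cone $\sigma_F$ corresponds to a vertex $v_F$ of $\Delta$, and $\Delta$ being a Minkowski sum means $v_F = v_{F,1} + \dots + v_{F,r}$ with $v_{F,i}$ a vertex of $\Delta_i$; this decomposition is unique because $\Sigma_Y$ refines the normal fan of each $\Delta_i$. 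I would then show that the rays $\rho$ with $u_\rho \in \nabla_i$ appearing in $D_i = \sum_{u_\rho\in\nabla_i} D_\rho$ are exactly the rays on which the support function of $\Delta_i$ (shifted so that $\Delta_i$ passes through the relevant lattice point associated to $e_i$) takes value $-1$, while all other rays of $\Sigma_Y$ give value $0$ — in other words, $\mathcal{O}(D_i)$ is the line bundle whose global-section polytope is $\Delta_i$, translated appropriately. The key input here is that the vertices of $\tilde\Delta_i$ are vertices of $\tilde\Delta$ (the last clause of Proposition \ref{proposition:part-part}) and dually that the vertices of $\nabla_i$ are vertices of $\tilde\nabla$, so the rays indexing $D_i$ are genuinely among $\Sigma_Y(1)$ and the incidence "$u_\rho \in \nabla_i$" is detected by the linear functionals cutting out $\sigma_F$.

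The Cartier condition is then checked via Theorem \ref{theorem:linear-equals-cartier}: for each maximal cone $\sigma = \sigma_F$ I must produce a lattice point $m_{i,\sigma} \in M$ with $\langle m_{i,\sigma}, u_\rho\rangle = a_\rho^{(i)}$ for every ray $\rho \subseteq \sigma$, where $a^{(i)}_\rho = -1$ if $u_\rho \in \nabla_i$ and $0$ otherwise (signs depending on the orientation convention in Definition \ref{definition:base-toric}). The natural candidate is $m_{i,\sigma} = -v_{F,i}$ (the $i$-th Minkowski summand of the vertex $v_F$ dual to $\sigma$), or more precisely the vertex of $\tilde\Delta_i$ sitting over $\sigma$ pushed into $M$ via the splitting; that this is integral uses that the $\nabla_j$ have integral vertices (Proposition \ref{prop:batyrev-nill}) and hence, by reflexivity and the Gorenstein property, the dual polytopes $\tilde\Delta_i$ have their vertices at lattice points of $\overline M$ lying in the lattice translate $M + e_i$. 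Once the local data $\{m_{i,\sigma}\}$ is exhibited and shown to patch (which is automatic from the Minkowski decomposition of vertices being compatible across adjacent cones), Theorem \ref{theorem:linear-equals-cartier} gives Cartier-ity, and the global sections of $\mathcal{O}(D_i)$ are computed in the usual way as $\{m \in M \mid \langle m, u_\rho\rangle \geq a^{(i)}_\rho \ \forall \rho\} = \{m \mid m + e_i \in \tilde\Delta_i\} = \Delta_i$, which is exactly the definition of $\Delta_i$ in Definition \ref{definition:base-toric}.

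The main obstacle I anticipate is bookkeeping the two lattices cleanly: $\overline M$ carries the Gorenstein cone $K$ and its support $\tilde\Delta$, while $M \subset \overline M$ is the sublattice annihilated by $E^\vee$, and the $\Delta_i$ live in $M_\mathbf{R}$ as the fibers "$\tilde\Delta_i - e_i$". The content of the theorem is precisely that translating the partition pieces $\tilde\Delta_i$ of the support down into $M$ produces the global-section polytopes of honest Cartier divisors on $Y$; the geometry is forced, but one has to be careful that the translation by $e_i$ is by a lattice vector of $\overline M$ (so $\tilde\Delta_i$ and $\Delta_i + e_i$ really are lattice-equivalent) and that the resulting $\Delta_i$ is full-dimensional enough for the normal-fan comparison — but this is guaranteed since $\Sigma_Y$ was defined as the normal fan of $\Delta = \sum_i \Delta_i$, so $\Sigma_Y$ automatically refines each normal fan $\Sigma_{\Delta_i}$, which is all the compatibility one needs. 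Everything else is the routine toric translation between support functions, Cartier data on maximal cones, and section polytopes.
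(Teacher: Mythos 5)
Your proposal is correct in substance and hinges on the same key computation as the paper, but it organizes the argument in the opposite order and verifies Cartier-ity by a different mechanism. The paper first computes the section polytope: it rewrites the defining inequalities $u_\rho(m)+a^{(i)}_\rho\geq 0$ by lifting each $u_\rho$ (a vertex of $\Delta^*=\operatorname{conv}(\{\nabla_j\}_j)$, via Proposition \ref{prop:batyrev-nill}) to a vertex $v$ of $\tilde\nabla$ under $\overline N\to N$, so that $v(m+e_i)=u_\rho(m)+\langle e_i,v\rangle$ with $\langle e_i,v\rangle=1$ or $0$ according to whether $v\in\tilde\nabla_i$; this identifies the polytope with $\Delta_i$ directly, and Cartier-ity is then outsourced to integrality of the vertices of $\Delta_i$ plus Theorem 6.1.7 of Cox--Little--Schenck. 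You instead produce explicit Cartier data $m_{i,\sigma_F}=-v_{F,i}$ on each maximal cone from the Minkowski decomposition $v_F=\sum_i v_{F,i}$ of the vertices of $\Delta$, invoking the paper's own Theorem \ref{theorem:linear-equals-cartier}, and only afterwards read off the section polytope. Both routes reduce to the same pairing identity --- that the support function of $\Delta_i$ evaluated at $u_\rho$ is $-1$ if $u_\rho\in\nabla_i$ and $0$ otherwise --- and this is the one place where your write-up is vaguer than the paper's: you assert it is ``detected by the linear functionals cutting out $\sigma_F$,'' whereas the paper proves it by the explicit lift to $\overline M$ and the partition property of Proposition \ref{proposition:part-part}. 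If you supply that computation (it is exactly the statement that $v(m+e_i)\geq 0$ unpacks to $u_\rho(m)\geq 0$ or $u_\rho(m)\geq -1$ according to the part of the partition containing $v$), your cone-by-cone argument is complete and has the modest advantage of being self-contained, not relying on the external citation for Cartier-ity; your detour through reflexivity to get integrality of the vertices of $\tilde\Delta_i$ is also unnecessary, since Proposition \ref{proposition:part-part} already exhibits $\tilde\Delta_i$ as the convex hull of lattice vertices of $\tilde\Delta$.
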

\begin{proof}
The theorem follows from establishing 
the polytope associated to 
$D_i$ is $\Delta_i.$  From this, $D_i$ is Cartier by the integrality of the vertices of $\Delta_i$ and 
Theorem 6.1.7 from Cox-Little-Schenck \cite{Cox-Little-Schenck}.

Recall that $m \in M_\mathbf{R}$ is in the polytope associated to 
$D_i$ if and only if 
$$D_i + \sum_{\rho \in \Sigma_Y(1)} u_{\rho}(m) D_\rho \geq 0.$$
Working $\rho$ by $\rho$ this is 
$$
u_\rho(m) + \left\{
\begin{array}{ll}
 1 & \text{if $u_\rho \in \nabla_i$}, \\
 0 &  \text{otherwise}\\
\end{array}
\right.
 \ \geq 0.
$$

Proposition \ref{prop:batyrev-nill} means the $u_\rho$'s equal the vertices of the convex hull of the $\nabla_i$'s 
and this provides enough to complete the proof.  Indeed, the transpose 
$$
\overline{N} \to N
$$
of the inclusion $M \hookrightarrow \overline{M}$ sends $\tilde{\nabla}$ to the convex hull of the $u_\rho$'s
So $u_\rho(m) = v(m)$ for an appropriate vertex of $\tilde{\nabla},$
and
$$
v(m + e_i) \geq 0
$$
becomes 
$$
u_\rho(m) \geq 0
$$
for those $u_\rho$ which do not come from $\nabla_i,$ and 
$$
u_\rho(m) + 1 \geq 0
$$
for the others.
\end{proof}

\begin{defn} {\bf ($\mathcal{V}, 
X, \Sigma$)}
With $\mathcal{V} = \bigoplus_i \mathcal{O}(D_i),$ we set
\begin{itemize}
\item $X =\operatorname{\underline{Spec}} \operatorname{Sym^\bullet} \mathcal{V},$ and
\item $\Sigma = \Sigma_X.$
\end{itemize}
\end{defn}

A complete description of the fan $\Sigma$ depends on knowing more information about the vertices  $\Delta.$  However, we can determine what we need  with what we know so far.

\begin{prop} {\bf ($N_X = \overline{N}, \Sigma(1)$ and $\Gamma(Y, \mathcal{V})$)}
\label{proposition:sigma-fan} 
The one-parameter subgroups of $X$ are naturally identified with $\overline{N}$.  Under this identification  
the primitive generators of the $1$-cones in  $\Sigma$ are exactly $\operatorname{vert}(\tilde{\nabla}) \cup \{e_i^\vee \}_i.$
Thus $|\mathbf{\Sigma}| = K^\vee,$ and 
$\Gamma(Y, \mathcal{V}),$ considered as functions on $X,$ have a basis made up of elements $\tilde{\Delta} \cap \overline{M}.$
\end{prop}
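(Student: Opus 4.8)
The plan is to feed $\mathcal{V} = \bigoplus_{i} \mathcal{O}(D_i)$ into Corollary~\ref{cor:split-bundle-fan}. This immediately exhibits $X$ as toric with one-parameter subgroup lattice $N_X = N \oplus \mathbf{Z}^r$ (equivalently character lattice $M_X = M \oplus \mathbf{Z}^r$, the $\mathbf{Z}^r$-summand recording powers of rational sections $p_i$ of $\mathcal{O}(D_i)$ with $\operatorname{div} p_i = D_i$), and it lists every cone of $\Sigma = \Sigma_X$: for $\sigma \in \Sigma_Y$ the lifted cone $\hat{\sigma}_{(0,\dots,0)}$ is generated by the vectors $(u_\rho,\, a^{(1)}_\rho,\dots,a^{(r)}_\rho)$, where $a^{(i)}_\rho$ is the multiplicity of $D_\rho$ in $D_i$ — so $a^{(i)}_\rho = 1$ if $u_\rho \in \nabla_i$ and $0$ otherwise, by Definition~\ref{definition:divisor-tau} — and one adds the vertical rays $(0, e^\vee_i)$, $i = 1,\dots,r$. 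Thus all the combinatorial content is already present; the remaining work is to recognize $N \oplus \mathbf{Z}^r$ as $\overline{N}$ and to identify the rays.

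For the lattice, I would set $\phi \colon M \oplus \mathbf{Z}^r \to \overline{M}$, $(m, k_1,\dots,k_r) \mapsto m + \sum_i k_i e_i$. Relabelling $E^\vee$, Corollary~\ref{corollary:sum-partiton} lets me assume $e^\vee_j(e_i) = \delta_{ij}$ (each $e_i$ lands in a distinct $\tilde{\Delta}_j$); then $\phi$ is an isomorphism, with injectivity and surjectivity obtained by applying the functionals $e^\vee_j$ and using $\bar{m} - \sum_i e^\vee_i(\bar{m})\,e_i \in \bigcap_j \ker e^\vee_j = M$. The transpose isomorphism $N \oplus \mathbf{Z}^r \cong \overline{N}$ carries the vertical ray $(0, e^\vee_i)$ to the element $e^\vee_i \in E^\vee$ (the functional on $\overline{M}$ vanishing on $M$ with $e_j \mapsto \delta_{ij}$), which supplies the $\{e^\vee_i\}_i$ in the asserted ray list. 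For the lifted rays, I would unwind these coordinates: $\ell \in \overline{M}$ (for which $K^\vee \cap \overline{N}$ is generated by $(\ell)^{-1}(1)$) equals $\sum_i e_i$, so $\langle (n,b),\ell\rangle = \sum_i b_i$ and $\tilde{\nabla} = \{(n,b) \in K^\vee : \sum_i b_i = 1\}$, while $\tilde{\nabla}_i$ is cut out by pinning the $\mathbf{Z}^r$-coordinate to $e^\vee_i$, i.e.\ $\tilde{\nabla}_i$ is the copy of $\nabla_i$ (its image in $N_{\mathbf{R}}$) lying at that coordinate. By Proposition~\ref{proposition:part-part} applied to $K^\vee$, $\tilde{\nabla}$ is the convex hull of the $\tilde{\nabla}_i$, each of which is a face (maximize the pairing against $e_i$), so the vertices of $\tilde{\nabla}$ are the vertices of the $\nabla_i$ sitting at the appropriate coordinate; by Proposition~\ref{prop:batyrev-nill} these are exactly the ray generators $u_\rho$ of $\Sigma_Y$ (the vertices of $\Delta^* = \operatorname{conv}(\bigcup_i \nabla_i)$), placed at the unique coordinate $e^\vee_i$ with $u_\rho \in \nabla_i$ — precisely the lifted-ray generators $(u_\rho, a^{(\bullet)}_\rho)$. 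This gives $\Sigma(1) = \operatorname{vert}(\tilde{\nabla}) \cup \{e^\vee_i\}_i$.

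For $|\Sigma| = K^\vee$ I would argue from the fan itself rather than the ray list. Since $\Sigma_Y$ is the normal fan of the full-dimensional polytope $\Delta$ it is complete, so the maximal cones of $\Sigma$ are the $\hat{\sigma}_{(1,\dots,1)} = \hat{\sigma}_{(0,\dots,0)} + \sum_i \mathbf{Q}_{\geq 0}(0,e^\vee_i)$ for $\sigma$ maximal in $\Sigma_Y$; since the $a^{(i)}_\rho$ fit together into a piecewise-linear function $\psi_i$ on $|\Sigma_Y|$ (Theorem~\ref{theorem:linear-equals-cartier}), this gives $|\Sigma| = \{(n,b) : b_i \geq \psi_i(n)\ \forall i\}$. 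By the ``Cartier-ity'' theorem $\psi_i$ is the support function of $\Delta_i$; and since $K \cap \overline{M}$ is generated by $\bigsqcup_i(\tilde{\Delta}_i \cap \overline{M})$ with $\tilde{\Delta}_i = \{m + e_i : m \in \Delta_i \cap M\}$, one gets $K^\vee = \{(n,b) : \langle n,m\rangle + b_i \geq 0\ \forall i,\ \forall m \in \Delta_i\}$; the two descriptions coincide. The same bookkeeping gives the final claim: $\Gamma(Y,\mathcal{V}) = \bigoplus_i \Gamma(Y,\mathcal{O}(D_i))$, each summand has the monomial basis indexed by $\Delta_i \cap M$ by ``Cartier-ity'', and carrying the section $\chi^m$ of $\mathcal{O}(D_i)$ over to the monomial function $\chi^m p_i$ on $X$ — with exponent $\phi(m, e^\vee_i) = m + e_i$ — turns this basis into $\tilde{\Delta}_i \cap \overline{M}$; summing over $i$ and invoking Proposition~\ref{proposition:part-part} gives the basis indexed by $\tilde{\Delta} \cap \overline{M}$.

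I expect the genuine obstacle to be the combinatorial step in the second paragraph, and inside it the assertion that each ray generator $u_\rho$ of $\Sigma_Y$ lies in \emph{exactly} one $\nabla_i$, so that $(u_\rho, a^{(\bullet)}_\rho)$ really sits in the degree-one slice $\tilde{\nabla}$ and is a vertex there (rather than having $\sum_i a^{(i)}_\rho > 1$), together with the companion fact that every vertex of $\nabla_i$ other than $0$ is a vertex of $\Delta^*$, so that $\tilde{\nabla}$ contributes no vertices beyond the lifted rays and the $e^\vee_i$. Both are the properties that make $\{\nabla_i\}$ a (dual) nef-partition; I expect them to follow from $\sum_i D_i$ being the anticanonical divisor $\sum_\rho D_\rho$ of $Y$ — extractable from Proposition~\ref{prop:batyrev-nill} together with the ``Cartier-ity'' identification of the polytopes $\Delta_i$ — while everything else is routine unwinding of Corollary~\ref{cor:split-bundle-fan} through $\phi$.
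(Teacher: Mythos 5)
Your proposal takes essentially the same route as the paper's proof: apply Corollary \ref{cor:split-bundle-fan}, identify $\overline{M}$ with $M \oplus F(E)$ by matching the rational sections $\tau_i$ (your $p_i$) with the $e_i$ (using Corollary \ref{corollary:sum-partiton} to normalize $e_j^\vee(e_i)=\delta_{ij}$), and then identify the vertical rays with $E^\vee$ and the lifted rays with $\operatorname{vert}(\tilde{\nabla})$ — and you in fact supply details (the support-function argument for $|\Sigma|=K^\vee$ and the monomial basis of $\Gamma(Y,\mathcal{V})$) that the paper's proof leaves implicit. The one point you flag as unresolved — that each $u_\rho$ lies in exactly one $\nabla_i$, and that every nonzero vertex of each $\nabla_i$ is a vertex of $\Delta^*$ — is precisely the step the paper's own proof also passes over (``for the unique $\nabla_i$ containing $u_\rho$''); it is a genuine property of dual nef-partitions imported from Batyrev--Borisov/Batyrev--Nill rather than something extractable from Propositions \ref{proposition:part-part} and \ref{prop:batyrev-nill} alone, so you have correctly isolated the only nontrivial external input.
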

\begin{proof}
To produce $Y$ via the construction according to 
Definition \ref{definition:base-toric}, we begin with a pair of dual nef-partitions $E$ and $E^\vee.$
First there is a polytope $\Delta \subseteq M = (E^\vee)^\perp,$ 
defined in terms of $E.$
The fan of $Y$ is the inward normal fan of $\Delta$ and lives in the group $N$ which is $\mathbf{Z}$-dual to $M.$

Writing $F(-)$ for the free group, and following the 
 construction of the fan $\Sigma$ of a split bundle $\bigoplus_i \mathcal{O}(-D_i)$ over a toric variety of Corollary \ref{cor:split-bundle-fan} the fan lives in 
$$
\overline{N} = N \oplus F(\{\tau_i^\vee\}_i) \quad \quad ( = N \oplus \mathbf{Z}^{ \{ \tau_i \}_i })
$$
where $\tau_i^\vee$ is  $\mathbf{Z}$ dual to  the rational section $\tau_i$ of  $\mathcal{O}(D_i)$ with divisor $D_i.$
The $1$-cones are either the 
``vertical'' $1$-cones generated by the 
$$
\tau_i^\vee.
$$
or those the lifted from the $1$-cones of $\Sigma_Y.$
The generators of the lifts can be written abstractly as
$$
\widehat{u}_\rho = u_\rho + \sum_i \nu_{i,\rho} \tau^\vee_{i},
$$
where the coefficients come from the expansion
$$
D_i = \sum_\rho \nu_{i, \rho} D_\rho.
$$
For us this is 
$$
\widehat{u}_\rho = u_\rho + \tau^\vee_{i},
$$
for the unique $\nabla_i$ containing $u_\rho.$

Identification of these points with corresponding points in $K^\vee$
begins with identifying the ambient spaces. Writing $F(-)$ for the free group, the dual nef-partitions provide a
splitting
$$
\overline{M} =  M \oplus F(E).
$$
$M$ is already equal to the dual of $N,$ and we can further identify
the characters on the total space with $\overline{M}$ by the assignments
$$
e_i = \tau_i.
$$
To be sure, this is legitimate because the divisor of $\tau_i$ is supported on toric divisors
and it (up to constant rescaling) a character.

This identification immediately matches $\widehat{u}_\rho \in \nabla_i$ with 
a vertex of $\tilde{\nabla}_i,$  and the elements of $E^\vee$ corresponds with
the $\tau^\vee_i$'s.
\end{proof}

\begin{defn} {\bf ($\Gamma, \Gamma', \mathbf{W}, \mathbf{W}', \Sigma', Z, Z^\vee, \gamma \mapsto g, \gamma' \mapsto g'$)} In light of the theorem above, we set
\begin{itemize}
\item $\Gamma = \operatorname{Spec} \mathbf{Z}[\gamma^m]_{m \in \tilde{\Delta} \cap \overline{M}},$ 
\item $\mathbf{W} = \sum_{m \in \tilde{\Delta} \cap \overline{M}} \  \gamma_m x^m,$ and
\item $Z \subseteq Y \times \Gamma$ is the zero scheme of the universal section of  $\mathcal{V}.$
\end{itemize}
In addition we denote by $\Sigma',$ $\Gamma',$ $\mathbf{W}'$ and  $Z^\vee$ the analogous objects obtained by reversing the roles of $K$ and $K^\vee$ above.
Finally, the specializations are the identity maps $\Gamma \to \Gamma$
and $\Gamma' \to \Gamma'.$
\end{defn}

\begin{thm}{\bf (BB main)}  The fans $\Sigma$ and $\Sigma'$ are dual, and the toric Landau-Ginzburg models they define are obtained from the auxiliary Landau-Ginzburg models of $Z$ and $Z^\vee$ via base change along the inclusions 
$$
C(\Sigma') \hookrightarrow \Gamma \text{ \quad and  \quad } 
C(\Sigma) \hookrightarrow \Gamma'
$$
which set to $0$ any coefficient corresponding to a non-ray generating character.
\end{thm}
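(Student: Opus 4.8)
\emph{Overview.} The plan is to read the statement off the explicit description of the two fans, since the substantive work has already been done: Proposition~\ref{proposition:sigma-fan} identifies the rays of $\Sigma$ and shows $|\Sigma|=K^\vee$, while $\Gamma$, $\mathbf{W}$, $Z$ are in hand. What remains is bookkeeping together with one small monoid computation, which is the only non-formal point.

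\emph{Duality.} First I would record that, by Proposition~\ref{proposition:sigma-fan}, $\overline{N}$ is the $\mathbf{Z}$-dual of $\overline{M}$ and $\Sigma\subseteq\overline{N}$ has support $|\Sigma|=K^\vee$; applying the same proposition to the dual splitting of $K^\vee$ (which exists by the proposition ``dual splittings exist'') gives $\Sigma'\subseteq\overline{M}$ with $|\Sigma'|=K$. Since $K^\vee$ is the dual cone of $K$ in $\overline{N}_{\mathbf{R}}$, this yields $\langle m,n\rangle\ge 0$ for all $m\in K=|\Sigma'|$ and $n\in K^\vee=|\Sigma|$, which together with the lattice duality is exactly the condition that $(\Sigma,\Sigma')$ be a dual pair. (Alternatively, $\mathbf{W}=\sum_{m\in\tilde{\Delta}\cap\overline{M}}\gamma_m x^m$ is a regular function on $\mathbf{X}(\Sigma)\times\Gamma$ by construction, hence so is its restriction $W(\Sigma')$, and the corollary on dual fans and regularity applies.)

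\emph{Coordinate inclusions and base change.} Next I would construct $C(\Sigma')\hookrightarrow\Gamma$. By Proposition~\ref{proposition:sigma-fan} applied to $K^\vee$, the primitive ray generators of $\Sigma'$ are exactly $\operatorname{vert}(\tilde{\Delta})\cup\{e_i\}_i$; I would check that these all lie in the coordinate index set $\tilde{\Delta}\cap\overline{M}$ of $\Gamma$. The vertices of $\tilde{\Delta}$ are integral by the Gorenstein hypothesis, and $\ell^\vee(e_i)=1$ for each $i$ because $\sum_i\ell^\vee(e_i)=\ell^\vee(\ell)=r$ while each $\ell^\vee(e_i)\ge 1$ --- a nonzero lattice point of $K$ has $\ell^\vee\ge 1$, since $K\cap\overline{M}$ is generated as a monoid by $(\ell^\vee)^{-1}(1)$. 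Hence $\gamma_m\mapsto c_{\rho'}$ if $m=u_{\rho'}$ for some $\rho'\in\Sigma'(1)$, and $\gamma_m\mapsto 0$ otherwise, defines a ring surjection $\mathbf{Z}[\gamma_m]_{m\in\tilde{\Delta}\cap\overline{M}}\twoheadrightarrow\mathbf{Z}[c_{\rho'}]_{\rho'\in\Sigma'(1)}$, i.e.\ a closed immersion realizing $C(\Sigma')$ as the claimed coordinate subspace; reversing the roles of $K$ and $K^\vee$ gives $C(\Sigma)\hookrightarrow\Gamma'$. Finally, base change of the product $\mathbf{W}\colon\mathbf{X}(\Sigma)\times\Gamma\to\mathbf{A}^1_\Gamma$ along $C(\Sigma')\hookrightarrow\Gamma$ is $\mathbf{X}(\Sigma)\times C(\Sigma')$, and the induced substitution turns $\mathbf{W}$ into $\sum_{\rho'\in\Sigma'(1)}c_{\rho'}x^{u_{\rho'}}=W(\Sigma')$, which is precisely the toric Landau-Ginzburg model attached to the dual pair $(\Sigma,\Sigma')$ in Theorem~\ref{thm:main-theorem}; the claim for $W(\Sigma)$ from $\mathbf{W}'$ and $Z^\vee$ is symmetric. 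The only place this argument does more than unwind definitions is the $\ell^\vee$-computation ensuring the ``vertical'' generators $e_i$ (resp.\ $e_i^\vee$) really appear among the character indices of $\Gamma$ (resp.\ $\Gamma'$); I expect that to be the sole (minor) obstacle.
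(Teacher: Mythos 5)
Your proposal is correct and follows essentially the same route as the paper: the paper's entire proof is the one-line observation that the primitive generators of the $1$-cones of $\Sigma'$ lie in $\tilde{\Delta}$, which is exactly the pivot of your argument. You simply supply the details the paper leaves implicit --- duality via $|\Sigma|=K^\vee$, $|\Sigma'|=K$ (or the regularity corollary), and the check $\ell^\vee(e_i)=1$ forcing $e_i\in\tilde{\Delta}$ --- all of which are sound.
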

\begin{proof}
These statements are immediate from the observation that the primitive generators of the $1$-cones of $\Sigma'$ lie in $\tilde{\Delta}.$
\end{proof}

\subsection{Givental \cite{Givental-1998}}
In this case, one side of the mirror is a  complete intersection and the other is a Landau-Ginzburg model, so $\Gamma$ is the affine spaces of global sections, and $\mathbf{W}$ is the universal section considered as a function.  
On the other side, $\mathbf{W}'$ is 
formed by simply inserting variables for the coefficients of $W',$  and $\Gamma'$ 
is the affine space with these coefficient variables as coordinates. 

Most of the work is expressing $X'$ as a toric variety, and expanding $W'$ in characters.

\begin{defn}
{\bf (Givental's mirror)}
As input data, we have 
\begin{itemize}
\item a smooth projective toric variety $Y$ with $n$ torus invariant divisors,
\item $\ell$ nonnegative (i.e. basepoint free \cite[pg. 23]{Givental-1998}) line bundles $(\mathcal{L}_a)_a$ such that $\omega_a^\vee \otimes \bigotimes_a \mathcal{L}^\vee_a$ is nonnegative, and
\item a choice of integral symplectic basis $(p_1, \dotsc, p_k)$ of $H^2(Y, \mathbf{Z}).$ 
\end{itemize}
From this we have 
 integers $\ell_{ia}$ defined by 
$$
\operatorname{ch}(\mathcal{L}_a) = \sum_i \ell_{i a} p_i
$$
and integers $m_{i \rho}$ defined by 
$$
\operatorname{ch}(\mathcal{O}(D_\rho)) = \sum_i m_{i \rho} p_i
$$
for the toric divisor $D_\rho$ corresponding to $\rho \in \Sigma_Y(1).$

{\bf Givental's mirror Landau-Ginzburg model} is given by the scheme
$$
E \subseteq    \mathbf{C}^{n}_w \times  \mathbf{C}^{\ell}_v \times (\mathbf{C}^\times)^k_q 
$$
which is the closure of the zero locus in 
$(\mathbf{C}^\times)^{n}_u \times  (\mathbf{C}^\times)^{\ell}_v \times (\mathbf{C}^\times)^k_q $ 
of the equations
\begin{equation}
\label{equation:Eq-equations}
 \prod_\rho w_\rho^{m_{i \rho}} = q_i \prod_a v_a^{\ell_{i a}} \quad  i = 1, \dotsc, k
\end{equation}
equipped with the function
\begin{equation}
\label{equation:givental-W'}
W' \colon E \to \mathbf{C}
\end{equation}
given by $W' =  \sum_a v_a -\sum_\rho w_\rho.$
\end{defn}

\begin{defn} {\bf ($\mathcal{V}, 
X, N, \Sigma, \Gamma, \mathbf{W}, \gamma \mapsto g$)}
With $\mathcal{V} = \bigoplus_a \mathcal{L}_a,$ we set
\begin{itemize}
\item $X =\operatorname{\underline{Spec}} \operatorname{Sym^\bullet} \mathcal{V},$ 
\item $\overline{N} = N_Y \oplus \mathbf{Z}^{\{ e_a\}_a},$ 
\item $\Sigma = \Sigma_X \subset \overline{N},$ 
\item $\Gamma = \operatorname{Spec} \mathbf{Z}[\gamma_\frak{m}]_{\frak{m} \in \Xi}$ where 
$
\Xi = \{ 
\mu + e_a \ | \ \mu \in \Delta_a \cap M_Y
\}$ for the global section polytopes $\Delta_a$ of the $\mathcal{L}_a$'s,
\item $\mathbf{W} = \sum_{\frak{m} \in \Xi} \gamma_{\frak{m}} x^\frak{m},$ and
\item the specialization $\operatorname{Spec} \mathbf{C} \to \Gamma$ depends on the equations of 
the complete intersection chosen.
\end{itemize}
\end{defn}

\begin{lem} {\bf ($F$-split)}
\label{lemma:split}
Writing $F(-)$ for the free group,
$H^2(Y, \mathbf{Z})$ is the cokernel of the map from characters to divisors on $Y$
$$
0 \to M_Y \to F(\{D_{\rho} \ | \ \rho \in \Sigma_Y(1) \})
\to H^2(Y, \mathbf{Z}) \to 0.
$$
Furthermore, one can find $k$ divisors $D_{\rho_i}$
such $\bigoplus_i \mathbf{Z} \, D_{\rho_i}$ maps isomorphically to  $H^2(Y, \mathbf{Z}).$
\end{lem}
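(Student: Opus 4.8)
\emph{Proof proposal.} The plan is to obtain the four-term sequence from standard toric divisor theory and then to read off the splitting from a single smooth maximal cone of $\Sigma_Y$.

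First I would recall that the displayed map sends a character $m\in M_Y$ to its principal divisor $\sum_{\rho\in\Sigma_Y(1)}\langle m,u_\rho\rangle D_\rho$. Because $Y$ is projective, hence complete, the support of $\Sigma_Y$ is all of $N_{Y,\mathbf{R}}$, so the rays positively span $N_{Y,\mathbf{R}}$ and no nonzero $m$ is orthogonal to every $u_\rho$; thus the map is injective. Its cokernel is the class group $\operatorname{Cl}(Y)$: the invariant divisors $D_\rho$ generate $\operatorname{Cl}(Y)$ since their union is the complement of the big torus, and the relations among them are exactly the principal ones (Theorem~4.1.3 of Cox--Little--Schenck). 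Smoothness of $Y$ identifies $\operatorname{Cl}(Y)$ with $\operatorname{Pic}(Y)$, and smoothness together with completeness identifies $\operatorname{Pic}(Y)$ with $H^2(Y,\mathbf{Z})$ via the first Chern class, using $H^1(\mathcal{O}_Y)=H^2(\mathcal{O}_Y)=0$ for a complete toric variety; this is the same identification used to define the integers $\ell_{ia}$ and $m_{i\rho}$. Transporting the sequence along these isomorphisms gives the statement, with $H^2(Y,\mathbf{Z})$ free of rank $k=|\Sigma_Y(1)|-\dim Y=:n-d$.

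For the splitting I would use a smooth maximal cone. Since $\Sigma_Y$ is complete it contains $d$-dimensional cones, and since $Y$ is smooth any such cone $\sigma$ is smooth, so its $d$ primitive ray generators $\{u_\rho:\rho\in\sigma(1)\}$ form a $\mathbf{Z}$-basis of $N_Y$. Let $\pi_\sigma\colon\mathbf{Z}^{\Sigma_Y(1)}\to\mathbf{Z}^{\sigma(1)}$ be the coordinate projection. Precomposing $\pi_\sigma$ with $m\mapsto\sum_\rho\langle m,u_\rho\rangle D_\rho$ yields $m\mapsto(\langle m,u_\rho\rangle)_{\rho\in\sigma(1)}$, which is the transpose of the basis isomorphism $\bigoplus_{\rho\in\sigma(1)}\mathbf{Z}\,u_\rho\xrightarrow{\ \sim\ }N_Y$ and hence an isomorphism $M_Y\xrightarrow{\ \sim\ }\mathbf{Z}^{\sigma(1)}$. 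I would then invoke the elementary fact that if $\iota\colon M_Y\hookrightarrow A=B\oplus C$ is injective and $\operatorname{pr}_B\circ\iota$ is an isomorphism then $A=\iota(M_Y)\oplus C$; applied with $A=\mathbf{Z}^{\Sigma_Y(1)}$, $B=\mathbf{Z}^{\sigma(1)}$ and $C=\mathbf{Z}^{\Sigma_Y(1)\setminus\sigma(1)}$, this shows $\mathbf{Z}^{\Sigma_Y(1)}=M_Y\oplus\mathbf{Z}^{\Sigma_Y(1)\setminus\sigma(1)}$. Consequently the composite $\mathbf{Z}^{\Sigma_Y(1)\setminus\sigma(1)}\hookrightarrow\mathbf{Z}^{\Sigma_Y(1)}\to\mathbf{Z}^{\Sigma_Y(1)}/M_Y=H^2(Y,\mathbf{Z})$ is an isomorphism, i.e. the $k=n-d$ divisors $D_\rho$ with $\rho\notin\sigma(1)$ map isomorphically onto $H^2(Y,\mathbf{Z})$; relabelling them $D_{\rho_1},\dots,D_{\rho_k}$ finishes the argument.

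I do not expect a genuine obstacle: the content is standard toric geometry (Chapter~4 of Cox--Little--Schenck), and the only steps needing care are the identification $\operatorname{Pic}(Y)\cong H^2(Y,\mathbf{Z})$ (smoothness, completeness, and vanishing of $H^1,H^2$ of the structure sheaf) and the elementary splitting lemma for a subgroup mapping isomorphically onto a direct summand. If one prefers to stay purely combinatorial, one can instead prove the split exact sequence $0\to M_Y\to\mathbf{Z}^{\Sigma_Y(1)}\to\operatorname{Pic}(Y)\to 0$ directly by the argument above and simply take this $\operatorname{Pic}(Y)$ as the definition of $H^2(Y,\mathbf{Z})$.
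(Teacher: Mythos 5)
Your proposal is correct and follows essentially the same route as the paper: the exact sequence and the identification $\operatorname{Pic}(Y)\cong H^2(Y,\mathbf{Z})$ are quoted from Cox--Little--Schenck, and the splitting comes from a smooth maximal cone, which is exactly the paper's choice of a torus fixed point with the $k$ divisors not passing through it. You simply spell out in more detail the elementary direct-sum argument that the paper leaves implicit.
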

\begin{proof}
Both statements follow from the facts that $Y$ is smooth and complete.
For the first, see, for instance, Theorems 4.1.3, 4.2.1, and 12.3.2 of Cox-Little-Schenck \cite{Cox-Little-Schenck}.
For the second, since $Y$ is smooth one can choose a torus fixed point, and
the $k$ divisors
which do not pass through it map to a basis for $\operatorname{H}^2(Y, \mathbf{Z}).$
\end{proof}

\begin{lem} {\bf (torus of $E$)}
\label{lemma:long-one}
Since there is a natural injection $\Sigma_Y(1) \to \Sigma_X(1),$
we can combine the inclusion $\iota \colon M_X \to F(\{D_{\rho} \ | \ \rho \in \Sigma_X(1) \})$ and the graph of the map 
$$s \colon H^2(Y, \mathbf{Z}) \to  F(\{D_{\rho} \ | \ \rho \in \Sigma_Y(1) \})$$ formed from an isomorphism 
$
H^2(Y, \mathbf{Z}) \to  \bigoplus_i \mathbf{Z} \, D_{\rho_i}
$
to yield an inclusion 
$$
\left[
\begin{array}{cc}
\iota  & s \\
0 & \mathbf{1}
\end{array}
\right]  \colon 
M_X \oplus H^2(Y, \mathbf{Z})  \hookrightarrow F(\{D_{\rho} \ | \ \rho \in \Sigma_Y(1) \}) \oplus H^2(Y, \mathbf{Z}).
$$
Applying the functor
 $\operatorname{Spec} \mathbf{C}[\operatorname{Hom}_\mathbf{Z}(-, \mathbf{Z})]$
 identifies 
 \begin{itemize}
 \item $\operatorname{Spec} \mathbf{C}[\operatorname{Hom}_{\mathbf{Z}}(H^2(Y, \mathbf{Z}), \mathbf{Z})]$ with $(\mathbf{C}^\times)^k_q,$ 
 \item  $\operatorname{Spec}(\mathbf{C}[N_X]) \times (\mathbf{C}^\times)^k_q$ with $E \cap (\mathbf{C}^\times)^{n}_u \times  (\mathbf{C}^\times)^{\ell}_v \times (\mathbf{C}^\times)^k_q$, and 
\item the inclusion with
 $$
 E \cap (\mathbf{C}^\times)^{n}_u \times  (\mathbf{C}^\times)^{\ell}_v \times (\mathbf{C}^\times)^k_q \ \ 
\hookrightarrow \ \ (\mathbf{C}^\times)^{n}_u \times  (\mathbf{C}^\times)^{\ell}_v \times (\mathbf{C}^\times)^k_q.
 $$
\end{itemize}
\end{lem}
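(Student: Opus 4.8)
The plan is to treat the whole assertion as bookkeeping with the Cartier-duality functor $\mathcal{T}(-)=\operatorname{Spec}\mathbf{C}[\operatorname{Hom}_{\mathbf{Z}}(-,\mathbf{Z})]$ reviewed in Subsection~\ref{subsection:quotient-fans}; the only genuine content is to match the kernel of a surjection of character lattices with Givental's equations (\ref{equation:Eq-equations}). First I would record the formal properties of $\mathcal{T}$: it is covariant, it sends $\mathbf{Z}^{r}$ to $(\mathbf{C}^{\times})^{r}$ and direct sums of lattices to products of tori, and it sends an injection of lattices with \emph{free} cokernel to a closed immersion of tori --- indeed, dualizing $0\to\Lambda_{1}\to\Lambda_{2}\to\Lambda_{3}\to 0$ with $\Lambda_{3}$ free gives a surjection $\operatorname{Hom}(\Lambda_{2},\mathbf{Z})\twoheadrightarrow\operatorname{Hom}(\Lambda_{1},\mathbf{Z})$, hence a surjection of group algebras.

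Next I would compute the three tori. Applying $\operatorname{Hom}_{\mathbf{Z}}(-,\mathbf{Z})$ to $M_{X}\oplus H^{2}(Y,\mathbf{Z})$ gives $N_{X}\oplus\operatorname{Hom}_{\mathbf{Z}}(H^{2}(Y,\mathbf{Z}),\mathbf{Z})$, so $\mathcal{T}$ turns it into $\operatorname{Spec}(\mathbf{C}[N_{X}])\times(\mathbf{C}^{\times})^{k}_{q}$ with the $q$-coordinates those dual to the symplectic basis $(p_{i})$ --- this is the first bullet and the source in the second; similarly $F(\{D_{\rho}\mid\rho\in\Sigma_{X}(1)\})\oplus H^{2}(Y,\mathbf{Z})$ dualizes to a free lattice whose torus, using the splitting $\Sigma_{X}(1)=\Sigma_{Y}(1)\sqcup\{\text{vertical rays}\}$ of Corollary~\ref{cor:split-bundle-fan}, is $(\mathbf{C}^{\times})^{n}_{u}\times(\mathbf{C}^{\times})^{\ell}_{v}\times(\mathbf{C}^{\times})^{k}_{q}$ --- the coordinate at $\rho\in\Sigma_{Y}(1)$ being $w_{\rho}$ and the one at the $a$-th vertical ray being the section variable $v_{a}$. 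Then I would check that $\Phi=\left[\begin{smallmatrix}\iota & s\\ 0 & \mathbf{1}\end{smallmatrix}\right]$ has free cokernel so $\mathcal{T}(\Phi)$ is a closed immersion: since the bottom-right block is the identity $\operatorname{coker}\Phi\cong\operatorname{coker}\iota$, and eliminating the $\mathbf{Z}^{\ell}$-summand of $M_{X}$ via the vertical coordinates this equals $\operatorname{coker}(M_{Y}\to F(\{D_{\rho}\mid\rho\in\Sigma_{Y}(1)\}))=H^{2}(Y,\mathbf{Z})$, which is free by Lemma~\ref{lemma:split}. So $\mathcal{T}(\Phi)$ is a closed immersion from $\operatorname{Spec}(\mathbf{C}[N_{X}])\times(\mathbf{C}^{\times})^{k}_{q}$, and it remains only to identify its image with the torus of $E$.

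For that last point I would compute the defining equations of the image. It is cut out in $(\mathbf{C}^{\times})^{n}_{u}\times(\mathbf{C}^{\times})^{\ell}_{v}\times(\mathbf{C}^{\times})^{k}_{q}$ by the binomials $\chi^{\kappa}-1$ for $\kappa$ running over a basis of $K:=\ker\Phi^{\ast}$; since $\Phi^{\ast}=\left[\begin{smallmatrix}\iota^{\ast} & 0\\ s^{\ast} & \mathbf{1}\end{smallmatrix}\right]$ one has $K\cong\ker\iota^{\ast}$ via $\phi\mapsto(\phi,-s^{\ast}\phi)$, and $\ker\iota^{\ast}$ is the lattice of linear relations $\sum_{\rho\in\Sigma_{X}(1)}\phi_{\rho}\widehat{u}_{\rho}=0$ among the rays of $\Sigma_{X}$. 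Writing $\widehat{u}_{\rho}=(u_{\rho},(a_{a,\rho})_{a})$ and the vertical rays as $(0,e_{a})$ (Corollary~\ref{cor:split-bundle-fan}, with $\mathcal{L}_{a}=\mathcal{O}(\sum_{\rho}a_{a,\rho}D_{\rho})$), such a relation has $\Sigma_{Y}(1)$-part a relation $\sum_{\rho}\phi_{\rho}u_{\rho}=0$ in $N_{Y}$ and vertical part $-\sum_{\rho}\phi_{\rho}a_{a,\rho}$; the relations among the $u_{\rho}$ are spanned, by the dual of the exact sequence of Lemma~\ref{lemma:split}, by the rows $(m_{i\rho})_{\rho}$, which satisfy $\sum_{\rho}m_{i\rho}u_{\rho}=0$. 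Combining this with $\sum_{\rho}m_{i\rho}a_{a,\rho}=\ell_{ia}$ and with $s^{\ast}\kappa_{i}=p_{i}^{\ast}$ (using $\kappa_{i}|_{\Sigma_{Y}(1)}=p_{i}^{\ast}\circ\beta$ for the quotient $\beta\colon F(\{D_{\rho}\mid\rho\in\Sigma_{Y}(1)\})\to H^{2}(Y,\mathbf{Z})$ and $\beta\circ s=\operatorname{id}$), the $i$-th basis element of $K$ reads $m_{i\rho}$ on the $w$-coordinates, $-\ell_{ia}$ on the $v$-coordinates, and $-p_{i}^{\ast}$ on the $q$-coordinates, so $\chi^{\kappa_{i}}=1$ is exactly $\prod_{\rho}w_{\rho}^{m_{i\rho}}=q_{i}\prod_{a}v_{a}^{\ell_{ia}}$, the $i$-th equation of (\ref{equation:Eq-equations}). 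Hence the image of $\mathcal{T}(\Phi)$ is $E\cap\bigl((\mathbf{C}^{\times})^{n}_{u}\times(\mathbf{C}^{\times})^{\ell}_{v}\times(\mathbf{C}^{\times})^{k}_{q}\bigr)$.

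The step I expect to be the main obstacle is precisely this last computation: keeping the signs straight and verifying that the weights $m_{i\rho}$, $\ell_{ia}$ land on the nose on the $w$-, $v$-, and $q$-coordinates so that the binomial ideal defining the image is exactly the one generated by (\ref{equation:Eq-equations}); once the splitting $s$ of Lemma~\ref{lemma:split} and the fan description of Corollary~\ref{cor:split-bundle-fan} are in hand, everything else is formal manipulation of Cartier duality.
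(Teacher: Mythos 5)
Your proposal is correct and is essentially the paper's own argument: both hinge on the identities $\sum_\rho m_{i\rho}u_\rho=0$ and $\ell_{ia}=\sum_\rho m_{i\rho}\alpha_{\rho a}$ to show that the lattice map $\left[\begin{smallmatrix}\iota & s\\ 0 & \mathbf{1}\end{smallmatrix}\right]$ dualizes to an embedding of tori whose image is cut out by Givental's equations (\ref{equation:Eq-equations}). The only difference is presentational --- the paper packages the computation as the short exact sequence $0\to M_X\oplus H^2\to F(\{D_\rho\})\oplus H^2\to H^2\to 0$ and reads off $E\cap T$ as the kernel of the resulting quotient torus map $q_i'=(\prod_\rho w_\rho^{m_{i\rho}})/(q_i\prod_a v_a^{\ell_{ia}})$, whereas you work with the dual description via the relation lattice $\ker\Phi^{\ast}$ and the binomials $\chi^{\kappa_i}-1$, which is the same computation.
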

\begin{proof}
Note $H^2(X, \mathbf{Z}) = H^2(Y, \mathbf{Z}).$  
If  $H^2(X, \mathbf{Z})$ is identified with $\mathbf{Z}^k$  with the $(p_i)_i$ basis, then matrix 
$$[(m_{i \rho})_{i \rho} | -(\ell_{i a})_{i a}]$$
is the map from the group of torus invariant divisors if $X$ to $H^2(X, \mathbf{Z}).$  
To verify this, we check that it is the kernel of the 
 transpose $U^t \colon \mathbf{Z}^n \oplus \mathbf{Z}^k \to N_X$
 of the first non-zero map in the exact sequence
$$
0 \to M_X \to \mathbf{Z}^n \oplus \mathbf{Z}^\ell \to \mathbf{Z}^k \to 0
$$
The map $U^t$ sends
the standard bases vectors to the primitive generators of rays in $\Sigma_X(1).$  
Following Corollary \ref{cor:split-bundle-fan}, we have  explicitly 
$$
U = \left[
\begin{array}{cc}
(u_\rho)_{\rho} & (\alpha_{\rho a})_{\rho a} \\
0 & I
\end{array}
\right].
$$
where 
we write $(u_\rho)_{\rho}$ for the matrix with rows $u_\rho$ for $\rho \in \Sigma_Y(1)$
and  $(\alpha_{\rho a})_{\rho a}$ is obtained from the expansions 
$$
D_a = \sum_\rho \alpha_{ \rho a } D_\rho
$$
of 
 toric divisors $D_a$ such that  $\mathcal{L}_a = \mathcal{O}_Y(D_a).$
 The $\alpha$'s satisfy
 \begin{equation}
\label{equation:L-equation}
\ell_{ia} = \sum_\rho m_{i \rho}\alpha_{\rho a}.
\end{equation}
Finally, $[(m_{i \rho})_{i \rho} | -(\ell_{i a})_{i }] U = 0$ by virtue of equation (\ref{equation:L-equation}) and  relations for the rays of $Y$ 
$$\sum_{\rho}m_{i \rho} u_{\rho}= 0.$$

The identification of 
$
\operatorname{Spec} \mathbf{C}[N_X] \times (\mathbf{C}^\times)^k_q
$ with
$E \cap (\mathbf{C}^\times)^{n}_u \times  (\mathbf{C}^\times)^{\ell}_v \times (\mathbf{C}^\times)^k_q$
comes from plugging 
the exact sequence
$$
0 \to M_X \oplus H^2(X, \mathbf{Z})  \to F(\{D_{\rho} \ | \ \rho \in \Sigma_Y(1) \}) \oplus H^2(X, \mathbf{Z}) \to H^2(X, \mathbf{Z}) \to 0
$$
into 
the functor $\operatorname{Spec} \mathbf{C}[\operatorname{Hom}_\mathbf{Z}(-, \mathbf{Z})].$ The first non-zero map is 
\begin{equation}
\label{equation:box-matrix}
\left[
\begin{array}{cc}
\iota  & s \\
0 & \mathbf{1}
\end{array}
\right]
\end{equation}
and the second is $[(m_{i \rho})_{i \rho} | -(\ell_{i a})_{i } | -\mathbf{1}].$ This yields an exact sequence of groups
$$
1 \to \operatorname{Spec} \mathbf{C}[N_X] \times (\mathbf{C}^\times)^k_q \to 
(\mathbf{C}^\times)^{n}_u \times  (\mathbf{C}^\times)^{\ell}_v \times (\mathbf{C}^\times)^k_q 
\to 
(\mathbf{C}^\times)^k_{q'} \to 1
$$
where the map to $(\mathbf{C}^\times)^k_{q'}$ is given by 
$$
q'_i = 
( \prod_\rho w_\rho^{m_{i \rho}}) /( q_i \prod_a v_a^{\ell_{i a}}).
$$
\end{proof}

\begin{prop} {\bf ($\Sigma_{X'}$)}
Denote $H = \operatorname{Spec} \mathbf{C}[\operatorname{Hom}_\mathbf{Z}(H^2(Y, \mathbf{Z}), \mathbf{Z})].$
 Then $M_X= M_Y \oplus F(\{ e_a \}_a),$ and
$$E=\operatorname{Spec} \mathbf{C}[K^\vee \cap N_X] \times H$$
where $K^\vee$ is the cone dual to the global function cone of $X:$
$$
 K = \{ t_1 (\mu_1 + e_1) + \cdots + t_\ell (\mu_\ell + e_\ell) \in (M_X)_\mathbf{Q} \ | \ t_a \geq 0 \text{ and } \mu_a \in \Delta_a \text{ for all }a \}.
$$
\end{prop}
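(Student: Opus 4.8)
The plan is to recognize $E$ as the Zariski closure of a torus inside a non-complete toric ambient space and then read off which affine toric variety that closure is. The first assertion, $M_X = M_Y \oplus F(\{e_a\}_a)$, is immediate from Corollary~\ref{cor:split-bundle-fan}: the fan $\Sigma_X$ lives in $\overline{N} = N_Y \oplus \mathbf{Z}^{\{e_a\}_a}$, so $M_X = \operatorname{Hom}_{\mathbf{Z}}(\overline{N},\mathbf{Z}) = M_Y \oplus F(\{e_a\}_a)$ with the $e_a$ the dual basis --- which is exactly how they enter the definition of the cone $K$.

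For $E$ itself, I would first invoke Lemma~\ref{lemma:long-one}, which already identifies the dense torus orbit of $E$ with $\operatorname{Spec}\mathbf{C}[N_X] \times H$, embedded monomially in $\mathbf{C}^n_w \times \mathbf{C}^\ell_v \times (\mathbf{C}^\times)^k_q$. The next step is to make that embedding explicit: combining the matrix $U$ of that lemma with Corollary~\ref{cor:split-bundle-fan}, the coordinate $w_\rho$ pulls back to the character $\chi^{\widehat{u}_\rho}$ (up to a Laurent monomial in the $q$'s coming from the graph map $s$), $v_a$ pulls back to $\chi^{e_a^\vee}$, and $q_i$ pulls back to the $i$-th coordinate of $H$, where $\{\widehat{u}_\rho\}_{\rho \in \Sigma_Y(1)}$ and $\{e_a^\vee\}_a$ are the ray generators of $\Sigma_X$ of Corollary~\ref{cor:split-bundle-fan}. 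Since $E$ is the closure of this torus, $\mathcal{O}_E(E)$ is the subalgebra of $\mathbf{C}[N_X \oplus \operatorname{Hom}_{\mathbf{Z}}(H^2(Y,\mathbf{Z}),\mathbf{Z})]$ generated by these pullbacks; because the $q_i^{\pm1}$ already generate $\mathbf{C}[\operatorname{Hom}_{\mathbf{Z}}(H^2(Y,\mathbf{Z}),\mathbf{Z})]$ on the nose, the $q$-twists on the $w_\rho$ contribute nothing, and this subalgebra equals $\mathbf{C}[P] \otimes_{\mathbf{C}} \mathbf{C}[\operatorname{Hom}_{\mathbf{Z}}(H^2(Y,\mathbf{Z}),\mathbf{Z})]$, where $P \subseteq N_X$ is the submonoid generated by $\{\widehat{u}_\rho\}_\rho \cup \{e_a^\vee\}_a$. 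Thus $E = \operatorname{Spec}\mathbf{C}[P] \times H$, and everything reduces to showing $P = K^\vee \cap N_X$.

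To pin down $P$ I would use Proposition~\ref{prop:regularity}: a character $m \in M_X$ lies in $\mathcal{O}_X(X)$ exactly when $\langle m, u_\rho\rangle \geq 0$ for every ray generator of $\Sigma_X$, so the global function cone of $X$ --- which is the cone $K$ of the statement, by the toric description of $\Gamma(Y,\operatorname{Sym}^\bullet \mathcal{V})$ --- is the dual of $C := \operatorname{Cone}(\{\widehat{u}_\rho\}_\rho \cup \{e_a^\vee\}_a)$, and hence $K^\vee = C$. Since $P$ generates $C$ as a cone, $P \subseteq C \cap N_X = K^\vee \cap N_X$ automatically, and the remaining content is that $P$ is \emph{saturated}. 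This is the point at which smoothness of $Y$ is used: $X$ is then the total space of a vector bundle over a smooth variety, hence smooth, so $\Sigma_X$ is a smooth fan --- equivalently, by Corollary~\ref{cor:split-bundle-fan}, each over-cone $\widehat{\sigma}$ is spanned by part of a $\mathbf{Z}$-basis of $\overline{N}$ precisely when the cone $\sigma$ is smooth --- and, as in Proposition~\ref{proposition:sigma-fan}, $C = |\Sigma_X|$, the convexity of $|\Sigma_X|$ coming from basepoint-freeness of the $\mathcal{L}_a$. Then any lattice point of $C = |\Sigma_X|$ lies in some smooth cone of $\Sigma_X$ and is a non-negative integer combination of its ray generators, hence lies in $P$; this gives $P = K^\vee \cap N_X$ and finishes the argument.

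I expect the saturation step to be the main obstacle: one must know that the ray generators of $\Sigma_X$ generate not merely the cone $K^\vee$ but the monoid $K^\vee \cap N_X$, which genuinely fails when $Y$ is singular --- then $E$ would only be the possibly non-normal $\operatorname{Spec}\mathbf{C}[P] \times H$, a partial normalization of $\operatorname{Spec}\mathbf{C}[K^\vee \cap N_X] \times H$. A secondary point to watch is that the closure really splits off $H$ as a direct factor rather than merely fibering over it; this is handled by noting that the $q$-coordinates are units, so the twist by the section $s$ in Lemma~\ref{lemma:long-one} adds nothing new to the subalgebra generated by the ambient coordinate functions.
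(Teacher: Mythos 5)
Your proposal is correct and follows essentially the same route as the paper's proof: identify the global functions on $E$ with the monomials on the dense torus that extend to the ambient $\mathbf{C}^{n}_w \times \mathbf{C}^{\ell}_v \times (\mathbf{C}^\times)^k_q$, split off the $H$ factor using that the $q_i$ are units, and identify the remaining monoid with $K^\vee \cap N_X$. The only substantive difference is that the paper simply asserts that an element of $N_X$ lies in the image of $\mathbf{N}^n \oplus \mathbf{N}^\ell$ if and only if it pairs non-negatively with the global function cone, whereas you correctly isolate this saturation claim as the delicate point and supply the missing justification via smoothness of $Y$ (hence of $\Sigma_X$) together with convexity of $|\Sigma_X|$.
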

\begin{proof}
Functions which are global on $E$ are exactly those on 
$E \cap (\mathbf{C}^\times)^{n}_u \times  (\mathbf{C}^\times)^{\ell}_v \times (\mathbf{C}^\times)^k_q $ 
which are restrictions from
$\mathbf{C}^{n}_u \times  \mathbf{C}^{\ell}_v \times (\mathbf{C}^\times)^k_q.$
For characters, these are elements of  
 $$
 N_X 
 \oplus 
\mathbf{Z}^k
  $$
  which come from $\mathbf{N}^n \oplus \mathbf{N}^\ell \oplus \mathbf{Z}^k$
  under the transpose of 
  $$
  \left[
\begin{array}{cc}
\iota  & s \\
0 & \mathbf{1}
\end{array}
\right]
  $$ from
  (\ref{equation:box-matrix}).

An element $(\frak{n}, z) \in N_X \oplus \mathbf{Z}^k$ defines a global characters 
 if and only if both $(\frak{n}, 0)$ and $(0, z)$ are global characters. 
   The $\mathbf{1}$ in the lower righthand corner guarantees $0 \oplus \mathbf{Z}^k$ is made up of global characters. So the question reduces to knowing for which elements $\frak{n}$ of $N_X$
   the character $(\frak{n}, 0)$ is global.

The $\frak{n}$'s we are interested in are exactly those which are global on $X',$
and this translates into the condition $\frak{n}$ is in the cone dual the global functions on $X.$ 
An element $n \in N_X$ comes from $\mathbf{N}^n \oplus \mathbf{N}^\ell$
  if and only if $\frak{n}$ is non-negative on any element of $M_X$ which maps into 
  $\mathbf{N}^n \oplus \mathbf{N}^\ell.$  These are exactly the elements of 
   $M_X$ which define a global function on $X.$
  \end{proof}

\begin{cor} {\bf ($\Sigma_{X'}(1) \subset \Gamma(Y, \mathcal{V})$)}
Take $X' = \operatorname{Spec} \mathbf{C}[K^\vee \cap N_X].$  Then
$\Sigma_{X'}$ is the cone $K$ and all its facets.  
Furthermore, the primitive integral generators of elements of $\Sigma_{X'}(1)$
are all of the form $v + e_a$ for $v \in \Delta_a.$
\end{cor}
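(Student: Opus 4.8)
The plan is to recognise $X'$ as the affine toric variety attached to the single strongly convex rational polyhedral cone $K$, taking $N_X$ as its lattice of characters and $M_X$ as its lattice of one-parameter subgroups, and then to extract the edges of $K$ directly from the Minkowski-sum description $K = \sum_a \mathbf{Q}_{\ge 0}\cdot(\Delta_a + e_a)$ supplied by the preceding proposition.

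First I would check the two hypotheses needed to apply the standard construction of the toric scheme of a fan (over $\mathbf{C}$). The cone $K$ is rational polyhedral because each $\Delta_a$ is the global section polytope of the Cartier divisor $D_a$ on the projective toric variety $Y$, hence a nonempty lattice polytope (nonempty by basepoint-freeness); so $\Delta_a + e_a$ is a lattice polytope in $M_X$, the cone it generates is rational polyhedral, and a finite Minkowski sum of rational polyhedral cones is again one. The cone $K$ is strongly convex because the functional $\ell^\vee \in N_X$ that vanishes on $M_Y$ and sends each $e_a$ to $1$ takes the value $\sum_a t_a$ on the point $\sum_a t_a(\mu_a + e_a) \in K$; thus $\ell^\vee \ge 0$ on $K$ and vanishes only at $0$, so $\{0\}$ is a face of $K$. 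Using $(K^\vee)^\vee = K$, the preceding proposition's identification $X' = \operatorname{Spec}\mathbf{C}[K^\vee \cap N_X]$ then exhibits $X'$ as precisely the affine toric variety whose fan $\Sigma_{X'}$ consists of $K$ together with all of its faces; in particular $\Sigma_{X'}$ is the cone $K$ and all its facets.

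It then remains to identify $\Sigma_{X'}(1)$, i.e. the edges of $K$. Let $x$ span an edge of $K$ and write $x = \sum_a t_a(\mu_a + e_a)$ with $t_a \ge 0$ and $\mu_a \in \Delta_a$. If two of the $t_a$ were positive, say $t_a, t_b$ with $a \ne b$, then $t_a(\mu_a + e_a)$ and $x - t_a(\mu_a + e_a)$ would be two nonzero elements of $K$ summing to $x$, neither proportional to $x$ (the first has $e_b$-coordinate $0$ while $x$ has $e_b$-coordinate $t_b > 0$), contradicting extremality of the ray spanned by $x$. Hence exactly one $t_a$ is positive and $x = t_a(\mu_a + e_a)$. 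The same extremality argument, applied to a putative expression $\mu_a = \tfrac12(\mu' + \mu'')$ with $\mu', \mu'' \in \Delta_a$ distinct (and using that a scalar multiple of $\mu' + e_a$ equal to $\mu_a + e_a$ forces the scalar $1$, hence $\mu' = \mu_a$), forces $\mu_a$ to be a vertex of $\Delta_a$. Such a vertex lies in $M_Y$ by integrality of the vertices of global section polytopes of Cartier divisors, and $\mu_a + e_a \in M_X$ is primitive since its $e_a$-coordinate is $1$; so the primitive integral generator of every element of $\Sigma_{X'}(1)$ is of the form $v + e_a$ with $v \in \Delta_a$.

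The substantive point is the last step: one must rule out an edge of the Minkowski sum $K$ being ``smeared'' across several of the summand cones, or within a single $\Delta_a$ across several vertices. This is exactly what the $e_a$-coordinate bookkeeping above accomplishes, since it makes all the candidate generators $v + e_a$ (for $v$ a vertex of $\Delta_a$, $a = 1, \dots, \ell$) pairwise non-proportional. Everything else is a formal consequence of the preceding proposition together with the standard dictionary between strongly convex rational polyhedral cones and affine toric varieties.
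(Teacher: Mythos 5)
Your proof is correct and follows the same route the paper gestures at: identify $X'$ as the affine toric variety of the strongly convex cone $K$ in $M_X$, and use the integrality of the vertices of the $\Delta_a$'s to pin down the primitive ray generators. The paper's own proof is only a two-line remark, so your main contribution is supplying the details it omits — in particular the $e_a$-coordinate argument showing that an edge of the Minkowski sum $K$ must lie over a single vertex of a single $\Delta_a$ — and these details are all sound.
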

\begin{proof}
The only statement that isn't immediate is the one about the generators.  However, 
we know that $\Delta_a$ has integral vertices, so this guarantees result.
\end{proof}

\begin{prop} {\bf ($W'$ on $X' \times H$)}
Enumerate $\Sigma_X(1)$ so that the first $k$ rays correspond to the divisors in Lemma \ref{lemma:split}.
Then as a function on $E = X' \times H$ we have  
\begin{equation}
\label{equation:w'-specialized}
W' = \sum_a x'_a - \sum_{j=1}^k \prod_{i} q_i^{-s_{j i}} x'_{\rho_j} - \sum_{j=k+1}^n x'_{\rho_j}
\end{equation}
where 
\begin{itemize}
\item $x_a' = v_a,$  
\item $x'_{\rho_{j}} =   \prod_i q_i^{s_{j i}}  w_{\rho_{j}} $ for $j \leq k,$ and
\item $x'_{\rho_j} = w_{\rho_j}$ for $j > k.$
\end{itemize}
are characters on $X'$ via
$\mathbf{Z}^n \oplus \mathbf{Z}^\ell \to N_X.$ The integers $s_{ji}$ 
come from the map $s$ of Lemma \ref{lemma:long-one} in the $(p_i)_i$ basis.  Furthermore, 
the characters $x'_a$ and $x'_\rho$ are exactly the primitive generators 
for the rays in $\Sigma_X(1).$
\end{prop}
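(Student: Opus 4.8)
The plan is to extract \eqref{equation:w'-specialized} directly from the description of the torus of $E$ in Lemma~\ref{lemma:long-one}, which already carries out all of the relevant linear algebra; what remains is to translate the matrix $\begin{bmatrix}\iota & s\\ 0 & \mathbf{1}\end{bmatrix}$ into a statement about coordinate functions. First I would record that the inclusion of the open torus of $E$ into the ambient $(\mathbf{C}^\times)^n_u\times(\mathbf{C}^\times)^\ell_v\times(\mathbf{C}^\times)^k_q$ is $\operatorname{Spec}\mathbf{C}[\operatorname{Hom}_\mathbf{Z}(-,\mathbf{Z})]$ applied to $\begin{bmatrix}\iota & s\\ 0 & \mathbf{1}\end{bmatrix}$, so the pullback of characters is the transpose
$$\begin{bmatrix}\iota^t & 0\\ s^t & \mathbf{1}\end{bmatrix}\colon\ \mathbf{Z}^{\Sigma_X(1)}\oplus\operatorname{Hom}_\mathbf{Z}(H^2(Y,\mathbf{Z}),\mathbf{Z})\ \longrightarrow\ N_X\oplus\operatorname{Hom}_\mathbf{Z}(H^2(Y,\mathbf{Z}),\mathbf{Z}),$$
the target being the character lattice of $\operatorname{Spec}\mathbf{C}[N_X]\times H$. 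Here $w_\rho$ (resp.\ $v_a$, resp.\ $q_i$) is the basis vector on the left dual to the lifted ray $\widehat{u}_\rho$ (resp.\ to the vertical ray $e_a^\vee$, resp.\ to $p_i$), and $\iota^t$ is precisely the map $\mathbf{Z}^n\oplus\mathbf{Z}^\ell\to N_X$ of Lemma~\ref{lemma:long-one}, which carries each divisor basis vector to the primitive generator of the corresponding ray of $\Sigma_X$.

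With this dictionary the rest is short. Because the lower-left block of $\begin{bmatrix}\iota & s\\ 0 & \mathbf{1}\end{bmatrix}$ vanishes, the $N_X$-component of the pullback of $w_\rho$ is $\iota^t$ of a divisor basis vector, hence the primitive generator $\widehat{u}_\rho$ with no correction, and similarly the $N_X$-component of the pullback of $v_a$ is $e_a^\vee$. For the $H$-component, $s^t$ is supported on the $k$ divisor classes singled out in Lemma~\ref{lemma:split}: in the enumeration where $\rho_1,\dots,\rho_k$ are those rays, $s^t$ kills every vertical divisor and every lifted divisor other than $\widehat{u}_{\rho_1},\dots,\widehat{u}_{\rho_k}$, and on the $j$th of these it contributes the monomial $\prod_i q_i^{-s_{ji}}$, where the integers $s_{ji}$ are those of Lemma~\ref{lemma:long-one}. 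Hence, on the open torus of $E$, one has $v_a=x'_a$, $w_{\rho_j}=\prod_i q_i^{-s_{ji}}x'_{\rho_j}$ for $j\le k$, and $w_{\rho_j}=x'_{\rho_j}$ for $j>k$, where $x'_a$ and $x'_{\rho_j}$ denote the characters $\chi^{e_a^\vee}$ and $\chi^{\widehat{u}_{\rho_j}}$ on $X'$. Substituting into $W'=\sum_a v_a-\sum_\rho w_\rho$ produces the right-hand side of \eqref{equation:w'-specialized}. Since the primitive ray generators of $\Sigma_X$ lie in $|\Sigma_X|=K^\vee$, these $x'$ are honest regular characters on $X'$, so the right-hand side is a regular function on $X'\times H=E$; as $E$ is by construction the closure of the open torus in question, the identity of regular functions holds on all of $E$.

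The last assertion is then immediate from Corollary~\ref{cor:split-bundle-fan}: the rays of $\Sigma_X$ are exactly the $\ell$ vertical rays $e_a^\vee$ and the lifted rays $\widehat{u}_\rho$, one for each $\rho\in\Sigma_Y(1)$, and these primitive generators are precisely the $x'_a$ and $x'_{\rho_j}$ above --- which also explains the claim that the $x'$ are ``characters on $X'$ via $\mathbf{Z}^n\oplus\mathbf{Z}^\ell\to N_X$''. I expect the only genuine subtlety to be the lattice-and-transpose bookkeeping: in particular, noting that the zero in the lower-left corner of $\begin{bmatrix}\iota & s\\ 0 & \mathbf{1}\end{bmatrix}$ is what keeps the $N_X$-component of each $w_\rho$ equal to $\widehat{u}_\rho$ on the nose, so the entire twist relating $w_\rho$ to the $X'$-character $x'_\rho$ is pushed into the $q$-variables and governed by the single matrix $s$; and being careful that the enumeration is fixed so that $s$ really is supported on $\rho_1,\dots,\rho_k$.
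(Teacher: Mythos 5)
Your argument is correct and follows the same route as the paper's (much terser) proof: pull characters back along the transpose of the matrix in (\ref{equation:box-matrix}) and observe that the standard basis vectors map under $\mathbf{Z}^n\oplus\mathbf{Z}^\ell\to N_X$ to the primitive ray generators of $\Sigma_X(1)$, with the block-triangular structure forcing the entire $q$-twist onto the $k$ rays singled out by $s$. The density/regularity argument extending the identity from the open torus to all of $E$, which you spell out, is left implicit in the paper.
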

\begin{proof}
The way functions pull back under the inclusion $X' \times H \hookrightarrow \mathbf{C}^n \times \mathbf{C}^\ell (\mathbf{C}^\times)^k$ is described by the transpose of the map in (\ref{equation:box-matrix}).  Both 
the characters $x'_a$ and $x'_\rho$ and the primitive generators 
for the rays in $\Sigma_X(1)$ are the images of the standard basis vectors under 
$\mathbf{Z}^n \oplus \mathbf{Z}^\ell \to N_X.$
\end{proof}

\begin{defn} {\bf ($\Gamma', \mathbf{W}', \gamma' \mapsto g'$)}
The expansion of $W'$ in (\ref{equation:w'-specialized}) and the identification of 
its terms with the primitive generators 
for the rays in $\Sigma_X(1)$ gives us the definitions
\begin{itemize}
\item $\Xi' = \{ \frak{n} \in N_X \ | \ \frak{n}\text{ is a primitive generator of } \Sigma_X(1)\},$
\item $\Gamma' = \mathbf{Z}[\gamma'_\frak{n}]_{\frak{n} \in \Xi'},$ 
\item $\mathbf{W}' = \sum_{n \in \Xi'} \gamma'_{\frak{n}} {x'}^\frak{n},$ and
\item the specialization $H \to \Gamma'$ sets the coefficients of $\mathbf{W}'$ to match those in  (\ref{equation:w'-specialized}).
\end{itemize}
\end{defn}

\begin{thm} {\bf (Givental's mirror main)}
There are inclusions as coordinate subspaces
$$
C(\Sigma') \to \Gamma \text{ \quad and \quad } C(\Sigma) \to \Gamma'
$$
which produce the dual pair of toric Landau-Ginzburg models formed from $\Sigma$ and $\Sigma'$ via base change.
Furthermore,   $C(\Sigma) \to \Gamma'$ is an isomorphism.
\end{thm}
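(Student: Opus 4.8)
The plan is to read both coordinate‑subspace inclusions directly off the descriptions of $\Sigma(1)$ and $\Sigma'(1)$ already established, and then to note that in each direction ``base change'' is nothing more than setting to zero the coefficients of the characters that are not ray generators.

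First I would build $C(\Sigma')\hookrightarrow\Gamma$. By the Corollary placing $\Sigma_{X'}(1)$ inside $\Gamma(Y,\mathcal V)$, each ray $\rho'$ of $\Sigma'=\Sigma_{X'}$ has primitive generator of the form $v+e_a$ with $v$ a vertex of $\Delta_a$; hence $u_{\rho'}$ lies in the index set $\Xi=\{\mu+e_a\mid\mu\in\Delta_a\cap M_Y\}$ of $\Gamma$, and since the $e_a$ are independent the assignment $\rho'\mapsto u_{\rho'}$ is injective. I would take $C(\Sigma')\hookrightarrow\Gamma$ to be the coordinate‑subspace inclusion it cuts out, i.e.\ on rings the surjection $\mathbf Z[\gamma_{\frak{m}}]_{\frak{m}\in\Xi}\twoheadrightarrow\mathbf Z[c_{\rho'}]_{\rho'}$ with $\gamma_{u_{\rho'}}\mapsto c_{\rho'}$ and $\gamma_{\frak{m}}\mapsto 0$ for every $\frak{m}\in\Xi$ that is not a ray generator. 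Pulling $\mathbf W=\sum_{\frak{m}\in\Xi}\gamma_{\frak{m}}x^{\frak{m}}$ back along this deletes exactly the non‑ray terms, leaving $\sum_{\rho'\in\Sigma'(1)}c_{\rho'}x^{u_{\rho'}}=W(\Sigma')$; since the source toric scheme $\mathbf X(\Sigma)$ is unchanged, this realizes $W(\Sigma')\colon\mathbf X(\Sigma)\times C(\Sigma')\to\mathbf A^1_{C(\Sigma')}$ as the base change of $\mathbf W$.

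Next I would treat $C(\Sigma)\to\Gamma'$ and prove it is an isomorphism. Here $C(\Sigma)=\operatorname{Spec}\mathbf Z[c_\rho]_{\rho\in\Sigma_X(1)}$ and $\Gamma'=\operatorname{Spec}\mathbf Z[\gamma'_{\frak{n}}]_{\frak{n}\in\Xi'}$ with $\Xi'=\{\frak{n}\in N_X\mid\frak{n}\text{ a primitive generator of }\Sigma_X(1)\}$. A ray is determined by its primitive generator, so $\rho\mapsto u_\rho$ is a bijection $\Sigma_X(1)\to\Xi'$ and $c_\rho\leftrightarrow\gamma'_{u_\rho}$ is an isomorphism of affine spaces $C(\Sigma)\to\Gamma'$: this is the ``Furthermore''. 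Transporting $\mathbf W'=\sum_{\frak{n}\in\Xi'}\gamma'_{\frak{n}}{x'}^{\frak{n}}$ along it yields $\sum_{\rho\in\Sigma_X(1)}c_\rho\,{x'}^{u_\rho}$, which --- using $\mathbf X(\Sigma')=X'$ and the expansion (\ref{equation:w'-specialized}) identifying the monomials of $W'$ with the primitive ray generators of $\Sigma_X$ --- is exactly $W(\Sigma)$; so $W(\Sigma)$ is $\mathbf W'$ transported along an isomorphism, in particular its base change. Finally, since $W(\Sigma')$ is regular on $\mathbf X(\Sigma)\times C(\Sigma')$ (being a base change of the regular function $\mathbf W$), the Corollary on dual fans and regularity shows $(\Sigma,\Sigma')$ is a dual pair; equivalently, $|\Sigma|=|\Sigma_X|=K^\vee$ while $|\Sigma'|=|\Sigma_{X'}|=K$, so the pairing of $|\Sigma'|$ against $|\Sigma|$ is nonnegative by the definition of $K^\vee$. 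Thus the two functions produced above really are the dual pair of toric Landau--Ginzburg models of $\Sigma$ and $\Sigma'$.

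The main obstacle has, in effect, already been cleared by the earlier propositions: knowing that the primitive ray generators of $\Sigma_{X'}$ are the integral vertices of the polytopes $\Delta_a$ packaged into $\Gamma$, and dually that the characters in the expansion (\ref{equation:w'-specialized}) of Givental's $W'$ are precisely the primitive ray generators of $\Sigma_X$, each appearing exactly once. Given these, all that remains is index‑set bookkeeping --- checking that neither substitution drops a term or introduces a spurious one --- after which both base changes are literal substitutions and $C(\Sigma)=\Gamma'$ is a relabelling. The isomorphism is the expected general phenomenon: when one side of the mirror is a genuine Landau--Ginzburg model, its parameter space carries exactly one coordinate per character of its superpotential, and those characters are the ray generators of the dual fan.
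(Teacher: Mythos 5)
The paper gives no argument here --- its proof of this theorem is literally ``Omitted'' --- so there is nothing to compare your route against; what you have written is evidently the argument the author intended the preceding propositions to make immediate, and it is correct. Your two halves do exactly what is needed: the corollary placing the primitive generators of $\Sigma_{X'}(1)$ among the $v+e_a$ with $v$ an (integral) vertex of $\Delta_a$ puts them inside $\Xi$, giving the coordinate-subspace surjection $\mathbf{Z}[\gamma_{\mathfrak{m}}]_{\mathfrak{m}\in\Xi}\twoheadrightarrow\mathbf{Z}[c_{\rho'}]$, under which $\mathbf{W}$ visibly specializes to $W(\Sigma')$; and since $\Xi'$ is \emph{defined} as the set of primitive ray generators of $\Sigma_X$, the map $C(\Sigma)\to\Gamma'$ is a relabelling, which is the ``Furthermore.'' Routing the duality of the fans through the ``dual fans and regularity'' corollary is also the right move, since the $u_{\rho'}=v+e_a$ are global functions on $X$ by construction.

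One small inaccuracy worth fixing: you assert $|\Sigma_X|=K^{\vee}$. The paper only identifies $|\Sigma_{X'}|$ with $K$; for $\Sigma_X$ (the fan of the total space of $\mathcal{V}$ over $Y$) what the regularity criterion gives is the containment $|\Sigma_X|\subseteq K^{\vee}$ --- every ray generator of $\Sigma_X$ pairs nonnegatively with each generator $\mu+e_a$ of $K$ because those characters are global on $X$ --- and equality need not hold in general. The containment is all the nonnegativity of the pairing requires, and your primary argument via the regularity corollary does not use the equality, so this is cosmetic rather than a gap.
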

\begin{proof}
Omitted.
\end{proof}

\begin{rem}
{\bf (independence of choice of $p$'s)}
Notice that one consequence of this theorem is that Givental's mirror is 
independent of the choice of basis $(p_i)_i$ of $H^2(Y, \mathbf{Z}).$  Indeed,
one need not even assume the existence.
\end{rem}

\subsection{Hori-Vafa \cite{Hori-Vafa}}
\label{section:hori-vafa}
Hori-Vafa gives an expression \cite[Equation (7.78)]{Hori-Vafa}
for the BPS mass of a certain D-brane
related to complete intersection $Z$ in a toric variety $Y$.  Their expression is
the integral (\ref{equation:bps-mass}).This integral
is the pullback of and integral on a algebraic torus. 
This torus the function $W'$ appearing in the integrand match Givental's Landau-Ginzburg mirror.

\begin{defn}{\bf (Hori-Vafa BPS mass \cite[Equation (7.78)]{Hori-Vafa})}
\label{definition:bps-mass}
Given a toric variety $Y,$ $\ell$ global sections $G_\beta$
of line bundles $\mathcal{L}_\beta,$ and  a relative homology class, $[\gamma] \in \operatorname{H_n}(\mathbf{C}^N \times \mathbf{C}^\ell; B)$
 where $B = W'^{-1}(r)$ is
a fiber of 
$$W' = \sum_{i = 1}^N \exp(-Y_i) + \sum_{\beta = 1}^\ell \exp(-Y_{P_\beta})$$
 over a sufficiently large  $r \in \mathbf{R},$
the  BPS mass for a $D$-brane wrapping  $k$ is
\begin{equation}
\label{equation:bps-mass}
\begin{array}{rrll}
\Pi_\gamma  & = &  
\int_\gamma &   \\
& & & \prod_{i=1}^N dY_i  \\
& & & \prod_{\beta = 1}^\ell \exp(-Y_{P_{\beta}}) dY_{P_\beta} \\
& & & \prod_{a = 1}^k \delta(\sum_{i = 1}^N Q_{i a} Y_i - \sum_\beta d_{\beta a} Y_{P_{\beta}} - t_a) \\
& & &  \exp(-W')
\end{array}
\end{equation}
where 
\begin{itemize}
\item $\Sigma_Y(1) = \{ \rho_1, \dotsc, \rho_N \}, $ 
\item $\{ \eta_1, \dotsc, \eta_k\}$ is an integral basis of $H^2(\mathbf{Y}, \mathbf{Q}),$ and 
\item the class of  $\mathcal{L}_\beta$ is $ \sum_a d_{\beta a} \eta_a.$
\end{itemize}
\end{defn}

The delta functions in the integrand of Equation (\ref{equation:bps-mass})
are interpreted to mean a restriction of the integral to the subset on which 
the arguments vanish.
\begin{defn}{\bf ($\delta$ functions and forms)}
\label{definition:delta-integrand}
If $\phi$ is a differential form, the expression
$$
\int_\gamma \delta(g_1) \dotsm \delta(g_k) \phi
$$
means
$$
\int_{\gamma \cap \{ g_1= \dotsm = g_k = 0 \}} \psi
$$
where 
$$
\phi = dg_1 \wedge \dotsm  \wedge dg_k \wedge \psi.
$$
The apparent ambiguity in this definition is addressed in Lemma \ref{lemma:delta_forms_unambiguous}.
\end{defn}
\begin{lem} {\bf (unambiguity of $\delta$ valued forms)}
\label{lemma:delta_forms_unambiguous}
Given functions $f_1, \cdots, f_k,$ write 
$$
\mathbf{V}(g) = \{ p \ | \ f_1(p) = \dotsm = f_k(p) = 0 \}
$$
and 
$$
\mathbf{U}(dg) =  \{ p \ | \ df_1|_p,  \dotsc,  df_k|_p \text{ are linearly independent} \}.
$$
Any forms $\psi$ and $\psi'$ such that 
$$df_1 \wedge \dotsm  \wedge df_k \wedge \psi = df_1 \wedge \dotsm  \wedge df_k \wedge \psi',$$
satisfy
$$
\int_\gamma \psi = \int_\gamma \psi'
$$
for any simpex  $\gamma \colon \Delta^n \to \mathbf{V}(g)$ such that 
$\gamma^{-1}(\mathbf{U}(dg))$
is dense in $\Delta^n.$ 
\end{lem}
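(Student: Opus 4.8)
The plan is to set $\eta := \psi - \psi'$ and reduce the statement to the following: if $df_1 \wedge \dots \wedge df_k \wedge \eta = 0$ identically, then $\int_\gamma \eta = 0$. The role of the hypothesis on $\mathbf{U}(dg)$ is to make available the classical \emph{division lemma} for differential forms: if $\alpha^1, \dots, \alpha^k$ are $1$-forms that are linearly independent at every point of an open set $V$, and $\omega$ is a form with $\alpha^1 \wedge \dots \wedge \alpha^k \wedge \omega = 0$ on $V$, then in a neighborhood of each point of $V$ one can write $\omega = \sum_{i=1}^k \alpha^i \wedge \beta_i$ for smooth forms $\beta_i$. Applying this with $\alpha^i = df_i$ and $V = \mathbf{U}(dg)$ yields, near each point of $\mathbf{U}(dg)$, a local expression $\eta = \sum_i df_i \wedge \beta_i$.

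Next I would transport everything to the simplex via $\gamma$. Since $\gamma$ maps $\Delta^n$ into $\mathbf{V}(g) = \{ f_1 = \dots = f_k = 0 \}$, each function $f_i \circ \gamma$ vanishes identically on $\Delta^n$, and hence $\gamma^\ast(df_i) = d(f_i \circ \gamma) = 0$ everywhere on $\Delta^n$ by naturality of $d$. Consequently, on the relatively open subset $\gamma^{-1}(\mathbf{U}(dg)) \subseteq \Delta^n$, feeding the local division expression through $\gamma^\ast$ gives $\gamma^\ast \eta = \sum_i \gamma^\ast(df_i) \wedge \gamma^\ast \beta_i = 0$. Thus $\gamma^\ast \eta$ is a smooth form on $\Delta^n$ that vanishes on the set $\gamma^{-1}(\mathbf{U}(dg))$, which is dense by hypothesis; by continuity $\gamma^\ast \eta$ vanishes on all of $\Delta^n$. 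Therefore $\int_\gamma \psi - \int_\gamma \psi' = \int_\gamma \eta = \int_{\Delta^n} \gamma^\ast \eta = 0$, which is the claim.

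The only substantive ingredient is the division lemma, so the main work is to justify it; I would either cite a standard reference (it appears in treatments of the Frobenius theorem and the de Rham complex) or prove it directly: at a point $p \in \mathbf{U}(dg)$, complete $df_1|_p, \dots, df_k|_p$ to a smooth coframe $df_1, \dots, df_k, \theta^{k+1}, \dots, \theta^{N}$ on a neighborhood, expand $\eta$ as a combination of wedge monomials in this coframe with smooth coefficients, and observe that $df_1 \wedge \dots \wedge df_k \wedge \eta = 0$ forces the coefficient of every monomial not containing some $df_i$ to vanish; collecting the surviving monomials produces the desired smooth $\beta_i$. Two minor points deserve a remark rather than effort: $\mathbf{V}(g)$ may fail to be a manifold away from $\mathbf{U}(dg)$, but this is irrelevant since the argument is carried out on $\Delta^n$ and uses only that $\gamma$ lands in $\mathbf{V}(g)$; and the passage from ``vanishes on a dense subset'' to ``vanishes everywhere'' uses nothing beyond continuity of $\gamma^\ast \eta$. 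I expect no genuine obstacle here — the content of the lemma is concentrated entirely in the division step, and the openness of $\mathbf{U}(dg)$ together with the density assumption on $\gamma^{-1}(\mathbf{U}(dg))$ is precisely what makes that step applicable pointwise along $\gamma$.
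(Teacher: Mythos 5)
Your proposal is correct and follows essentially the same route as the paper: a local division of $\eta=\psi-\psi'$ by $df_1,\dotsc,df_k$ near points of $\mathbf{U}(dg)$ (the paper gets this by choosing coordinates whose first $k$ components are the $f_i$), followed by the observation that $\gamma^*(df_i)=d(f_i\circ\gamma)=0$ because $\gamma$ lands in $\mathbf{V}(g)$, and then the density hypothesis to handle the rest of $\Delta^n$. Your final step (a continuous form vanishing on a dense set vanishes everywhere) is in fact a cleaner justification than the paper's appeal to the complement having measure zero, but it is a cosmetic difference rather than a different argument.
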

\begin{proof}
Consider a point $p \in \mathbf{U}(dg)\cap \mathbf{V}(g).$  About this point we can find 
coordinates $g \colon U \to \mathbf{R}^n$ such that the first $k$ coordinates are 
$f_1, \dotsc, f_k.$  On this neighborhood, 
$$
\psi = \psi' + df_1 \wedge \psi_1 + \dotsm +  df_k \wedge \psi_k
$$
for forms $ \psi_1, \dotsc, \psi_k.$  However, any $df_i$ for $i = 1, \dotsc, k$ annihilates any tangent vector in 
$\mathbf{V}(g).$ So provided the image of $\gamma$ is in $\mathbf{V}(g)$ we know 
$$
\int_{\gamma^{-1}(\mathbf{U}(dg))} \gamma^*\psi = \int_{\gamma^{-1}(\mathbf{U}(dg))} \gamma^*\psi'.
$$
Finally, $\gamma^{-1}(\mathbf{U}(dg))$ is dense in $\Delta^n,$ so its complement has measure zero 
and its omission doesn't affect the integral.
\end{proof}

\begin{prop}{\bf (variables on the torus)}
For $
W' = -\sum_{i = 1}^N w_i -  \sum_{\beta = 1}^\ell v_\beta,
$
the integrand of Equation (\ref{equation:bps-mass})
is the pullback 
of 
\begin{equation}
\label{equation:form-on-torus}
(-1)^{N+\ell} \exp(-W') \ \ \prod_a \delta( \prod_i w_i^{Q_{ia}} - q_a \prod_\beta v_\beta^{d_{\beta a}} ) 
\ \   d\ln(w_1) \wedge \dotsm \wedge d\ln(w_N) \wedge dv_1 \wedge \dotsm \wedge dv_\ell
\end{equation}
along 
$$
\mathbf{C}_t^k \times \mathbf{C}^N_{Y} \times \mathbf{C}^\ell_{Y_P} \to 
(\mathbf{C}^\times)_q^k \times (\mathbf{C}^\times )^N_w \times (\mathbf{C}^\times)^\ell_v
$$
given by 
\begin{itemize}
\item $w_i = \exp(-Y_i),$  
\item $v_\beta = \exp(-Y_{P_\beta}),$ and
\item  $q_a = \exp(-t_a).$
\end{itemize}
\end{prop}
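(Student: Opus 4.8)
The plan is to pull the form (\ref{equation:form-on-torus}) back along the exponential map factor by factor and match each piece against the integrand of (\ref{equation:bps-mass}). Observe first that the map in the statement restricts to a covering map of $\mathbf{C}^k_t \times \mathbf{C}^N_Y \times \mathbf{C}^\ell_{Y_P}$ onto the open torus $(\mathbf{C}^\times)^k_q \times (\mathbf{C}^\times)^N_w \times (\mathbf{C}^\times)^\ell_v$; in particular it is a local biholomorphism, so pullback of forms and of the vanishing loci appearing in the delta factors is unambiguous.

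First I would handle the differential forms. Since $w_i = \exp(-Y_i)$ we have $d\ln(w_i) = -\,dY_i$, and since $v_\beta = \exp(-Y_{P_\beta})$ we have $dv_\beta = -\exp(-Y_{P_\beta})\,dY_{P_\beta}$; hence
\[
d\ln(w_1)\wedge\cdots\wedge d\ln(w_N)\wedge dv_1\wedge\cdots\wedge dv_\ell
\]
pulls back to $(-1)^{N+\ell}\,\prod_i dY_i\wedge\prod_\beta \exp(-Y_{P_\beta})\,dY_{P_\beta}$. The explicit prefactor $(-1)^{N+\ell}$ in (\ref{equation:form-on-torus}) cancels this sign, reproducing precisely the product of the $dY_i$ and the $\exp(-Y_{P_\beta})\,dY_{P_\beta}$ appearing in (\ref{equation:bps-mass}); and $\exp(-W')$ pulls back to $\exp(-W')$ once $W'$ is written through $w_i = \exp(-Y_i)$, $v_\beta = \exp(-Y_{P_\beta})$ as in the statement. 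The only factor left to account for is the product of delta functions, and this is the step I expect to be the real obstacle.

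For the delta factors, note that $\prod_i w_i^{Q_{ia}} = \exp(-\sum_i Q_{ia}Y_i)$ and $q_a\prod_\beta v_\beta^{d_{\beta a}} = \exp(-t_a - \sum_\beta d_{\beta a}Y_{P_\beta})$, so the argument $\prod_i w_i^{Q_{ia}} - q_a\prod_\beta v_\beta^{d_{\beta a}}$ pulls back to $\exp(-S_a)-\exp(-T_a)$, where $S_a = \sum_i Q_{ia}Y_i$ and $T_a = t_a + \sum_\beta d_{\beta a}Y_{P_\beta}$. Since $\exp$ is injective on the relevant component, the zero locus of these functions coincides with the zero locus of the linear functions $L_a := S_a - T_a = \sum_i Q_{ia}Y_i - \sum_\beta d_{\beta a}Y_{P_\beta} - t_a$ of (\ref{equation:bps-mass}). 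By Definition \ref{definition:delta-integrand} the value of the bracketed integral depends on the arguments only through their common zero locus and through the restriction to it of the form obtained by peeling off the differentials of the arguments, and the ambiguity in that procedure is controlled by Lemma \ref{lemma:delta_forms_unambiguous}. The crux is the computation that, along the common zero locus, $d(\exp(-S_a)-\exp(-T_a)) = -\exp(-T_a)\,dL_a$ (because $\exp(-S_a)=\exp(-T_a)$ there), so the two families of arguments differ by an invertible diagonal Jacobian whose $a$-th entry is the common value of the monomials $\prod_i w_i^{Q_{ia}} = q_a\prod_\beta v_\beta^{d_{\beta a}}$; tracking this factor through the peeling-off operation — together with the fact that (\ref{equation:form-on-torus}) uses logarithmic differentials $d\ln(w_i)$ rather than $dw_i$ — is exactly what matches the two measures, and I expect this bookkeeping of Jacobians to be the only genuinely delicate point.

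Finally I would verify the hypothesis of Lemma \ref{lemma:delta_forms_unambiguous} that $\gamma^{-1}$ of the locus where the differentials $d(\exp(-S_a)-\exp(-T_a))$ are linearly independent is dense in the simplex. Off the common zero locus this is automatic, and on it independence of these differentials is equivalent to independence of the $dL_a$, i.e.\ to the rows of $(Q_{ia})$ being independent — which holds because $(Q_{ia})$ is the map from torus-invariant divisors to $H^2(Y,\mathbf{Z})$, surjective of rank $k$ by Lemma \ref{lemma:split}. Combining the matching of forms, of vanishing loci, and of Jacobians then identifies the pullback of (\ref{equation:form-on-torus}) with the integrand of (\ref{equation:bps-mass}).
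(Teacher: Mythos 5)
The paper's own proof of this proposition is the single phrase ``Direct substitution,'' so your factor-by-factor verification is the same approach carried out in detail, and the part you actually carry out is right: $d\ln(w_i)\mapsto -\,dY_i$, $dv_\beta\mapsto -\exp(-Y_{P_\beta})\,dY_{P_\beta}$, and the prefactor $(-1)^{N+\ell}$ absorbs the signs, reproducing $\prod_i dY_i\wedge\prod_\beta \exp(-Y_{P_\beta})\,dY_{P_\beta}$. One caveat you pass over silently: with the two $W'$'s as literally printed, $W'=-\sum_i w_i-\sum_\beta v_\beta$ pulls back to the \emph{negative} of $\sum_i\exp(-Y_i)+\sum_\beta\exp(-Y_{P_\beta})$, so $\exp(-W')$ does not pull back to $\exp(-W')$. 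This is evidently a sign slip in the paper's two definitions of $W'$ rather than an error of yours, but the sentence ``$\exp(-W')$ pulls back to $\exp(-W')$'' is not literally true and should be flagged rather than asserted.

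The genuine gap is the step you yourself single out as the crux and then do not carry out. You correctly compute that on the common zero locus $d(\exp(-S_a)-\exp(-T_a))=-c_a\,dL_a$ with $c_a=q_a\prod_\beta v_\beta^{d_{\beta a}}$, so under Definition \ref{definition:delta-integrand} --- which peels off the exact differentials of the arguments and carries no Jacobian normalization --- the two products of deltas applied to the same form differ by the non-constant factor $\prod_a(-c_a)^{\pm 1}$. You claim this is absorbed because (\ref{equation:form-on-torus}) uses $d\ln(w_i)$ rather than $dw_i$, but that cannot be the mechanism: the logarithmic differentials are already fully spent in producing $\prod_i dY_i$ in the first part of your own argument, and nothing is left over to cancel $\prod_a(-c_a)$. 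Note also that Lemma \ref{lemma:delta_forms_unambiguous} does not help here, since it addresses changing $\psi$ for fixed arguments $g_a$, not rescaling the arguments themselves. So either the proposition must be read with a delta convention depending only on the common zero locus (in which case that convention, not the paper's stated one, needs to be invoked), or the factor $\prod_a(-c_a)$ must be tracked explicitly; as written, your sketch of how the bookkeeping closes does not work, and this is precisely where the identity could fail. Your final density and independence check via the rank of $(Q_{ia})$ is fine.
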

\begin{proof}
Direct substitution.
\end{proof}

\begin{defn}
{\bf (Hori-Vafa mirror LG)}
Denote by $Z$
 the subsheme of $Y$ defined by  $G_\beta = 0$ for $\beta = 1, \dotsc, \ell.$
The Hori-Vafa mirror Landau-Ginzburg mirror to $Z$ is
 $(X',W')$ where $X'  \subseteq (\mathbf{C}^\times)_q^k \times \mathbf{C}^N_w \times \mathbf{C}^\ell_v$
is given by the $k$ equations 
 $$
 \prod_i w_i^{Q_{ia}} - q_a \prod_\beta v_\beta^{d_{\beta a}} 
 $$
 and 
$
W' = -\sum_{i = 1}^N w_i -  \sum_{\beta = 1}^\ell v_\beta.
$ 
\end{defn}

\begin{thm} {\bf(computation of BPS mass on the dual)}
Using the interpretation of Definition \ref{definition:delta-integrand}, 
the BPS mass given in Equation (\ref{equation:bps-mass}) equals 
$$
\int_{\overline{\gamma}} \psi
$$
for the form in (\ref{equation:form-on-torus}) and the class $\overline{\gamma} \subseteq X'$
that is the image in $X'$ of $\gamma \cap \{g_1 = \dotsc = g_k = 0\},$ 
where the $g$'s are the arguments of the $\delta$ functions.
\end{thm}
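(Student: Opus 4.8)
The plan is to read the identity off from Proposition \textbf{(variables on the torus)}, the bookkeeping convention of Definition \ref{definition:delta-integrand}, and the unambiguity statement of Lemma \ref{lemma:delta_forms_unambiguous}. Throughout, let $g_a = \sum_i Q_{ia}Y_i - \sum_\beta d_{\beta a}Y_{P_\beta} - t_a$ denote the arguments of the $\delta$'s in (\ref{equation:bps-mass}) and $h_a = \prod_i w_i^{Q_{ia}} - q_a\prod_\beta v_\beta^{d_{\beta a}}$ the arguments of the $\delta$'s in (\ref{equation:form-on-torus}). On a fundamental domain for $\exp$ containing $\gamma$, the function $h_a\circ\exp$ and $g_a$ have the same zero locus, and $\{h_1 = \dots = h_k = 0\}$ inside the torus is the torus orbit of $X'$; thus $\exp$ carries $\gamma\cap\{g=0\}$ into $X'$, and its image there is the cycle $\overline{\gamma}$.

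First I would unwind the left-hand side using Definition \ref{definition:delta-integrand}. The $g_a$ are affine-linear with linearly independent differentials, so the holomorphic form in the integrand of (\ref{equation:bps-mass}) can be written $dg_1 \wedge \dots \wedge dg_k \wedge \psi^{\mathrm{up}}$ for some form $\psi^{\mathrm{up}}$, and the BPS mass becomes $\int_{\gamma \cap \{g = 0\}} \psi^{\mathrm{up}}$; by Lemma \ref{lemma:delta_forms_unambiguous} its value depends only on the cycle $\gamma \cap \{g = 0\}$ and on $\psi^{\mathrm{up}}$ modulo the differential ideal generated by the $g_a$. In parallel, $\psi$ is extracted from (\ref{equation:form-on-torus}) by the same recipe relative to the $h_a$, and, $\exp$ being a local biholomorphism, $\int_{\overline{\gamma}} \psi = \int_{\gamma \cap \{g = 0\}} \exp^*\psi$.

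Second I would match the two integrands. Proposition \textbf{(variables on the torus)} identifies the honest top form in the integrand of (\ref{equation:bps-mass}) with the $\exp$-pullback of the one in (\ref{equation:form-on-torus}), so $d(h_1 \circ \exp) \wedge \dots \wedge d(h_k \circ \exp) \wedge \exp^*\psi = dg_1 \wedge \dots \wedge dg_k \wedge \psi^{\mathrm{up}}$. Since $\{g = 0\}$ is a component of the common zero set of the $g_a$ and the $h_a \circ \exp$, comparing the two residues of this single top form — equivalently, checking that $d(h_a \circ \exp)$ and $dg_a$ agree along $\{g = 0\}$ up to the units prescribed by the normalization of Definition \ref{definition:delta-integrand} — shows $\exp^*\psi \equiv \psi^{\mathrm{up}}$ modulo the differential ideal of $\{g = 0\}$. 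A final application of Lemma \ref{lemma:delta_forms_unambiguous} then gives $\int_{\gamma \cap \{g = 0\}} \psi^{\mathrm{up}} = \int_{\gamma \cap \{g = 0\}} \exp^*\psi = \int_{\overline{\gamma}}\psi$, which is the assertion.

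The hard part is precisely this comparison of the two $\delta$-reductions. Upstairs the $\delta$'s impose the \emph{additive} relations $g_a = 0$ among the logarithmic coordinates $Y_i, Y_{P_\beta}, t_a$, while downstairs they impose the \emph{multiplicative} relations cutting out $X'$; one must verify that passing between these introduces no spurious Jacobian factor, i.e.\ that the normalization in Definition \ref{definition:delta-integrand} is exactly the one for which ``$\delta$ of the logarithm of a ratio'' and ``$\delta$ of the corresponding difference of monomials'' yield the same reduced form on their shared zero set. Everything else — the matching of the top-form parts and the identification of $\overline{\gamma}$ as a cycle in $X'$ — is already essentially contained in Proposition \textbf{(variables on the torus)}, hence routine.
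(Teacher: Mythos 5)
Your skeleton is the right one, and for what it is worth the paper offers no proof of this theorem at all (it is stated bare, following the proposition on variables on the torus whose proof is ``direct substitution''), so there is no argument of the author's to compare against; the intended reasoning is evidently the immediate combination of that proposition with Definition \ref{definition:delta-integrand} and Lemma \ref{lemma:delta_forms_unambiguous}. The problem is that the one step you yourself flag as ``the hard part'' --- verifying that passing from the additive $\delta(g_a)$ to the multiplicative $\delta(h_a)$ introduces no spurious Jacobian --- is exactly where all the content lives, and you defer it rather than perform it. Worse, if you perform it as you have set it up, it does not come out to $1$. Writing $h_a\circ\exp = e^{-A_a}-e^{-B_a}$ with $A_a=\sum_i Q_{ia}Y_i$, $B_a=t_a+\sum_\beta d_{\beta a}Y_{P_\beta}$ and $g_a=A_a-B_a$, one finds along $\{g_a=0\}$ that $d(h_a\circ\exp)=-e^{-A_a}\,dg_a=-\bigl(\prod_i w_i^{Q_{ia}}\bigr)dg_a$. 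Hence $dg_1\wedge\dotsm\wedge dg_k\wedge\psi^{\mathrm{up}} = \prod_a\bigl(-\prod_i w_i^{Q_{ia}}\bigr)\,dg_1\wedge\dotsm\wedge dg_k\wedge\exp^*\psi$ along the cycle, and Lemma \ref{lemma:delta_forms_unambiguous} then yields $\int_{\gamma\cap\{g=0\}}\psi^{\mathrm{up}} = \int_{\gamma\cap\{g=0\}}(-1)^k\prod_a\prod_i w_i^{Q_{ia}}\cdot\exp^*\psi$, not the claimed equality. This factor is a nonconstant unit on $X'$ (equal there to $(-1)^k\prod_a q_a\prod_\beta v_\beta^{d_{\beta a}}$), and Definition \ref{definition:delta-integrand} admits no ``normalization'' freedom that could absorb it: the reduced form $\psi$ is pinned down, modulo the ideal, once the arguments of the $\delta$'s are fixed.

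So your proposal is incomplete as a proof, and the missing verification is not routine: you must either (a) confront and dispose of the factor $(-1)^k\prod_a\prod_i w_i^{Q_{ia}}$ explicitly --- for instance by arguing that the intended reading of the proposition on variables on the torus is that the symbol $\delta$ is transported formally under the substitution, i.e.\ that $\delta(g_a)$ upstairs is by definition the pullback of $\delta(h_a)$ downstairs, in which case the theorem reduces to the easy observation that reduction commutes with pullback along the local isomorphism $\exp$ together with Lemma \ref{lemma:delta_forms_unambiguous} --- or (b) conclude that the identity holds only up to this explicit unit and say so. Two smaller points: Lemma \ref{lemma:delta_forms_unambiguous} as stated hypothesizes a global identity of forms with respect to a single set of defining functions, whereas you need the version where the identity holds only at points of $\{g=0\}$ and where the two reductions are taken with respect to different defining functions ($g_a$ versus $h_a\circ\exp$); the lemma's proof does extend, but you should say this. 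And your appeal to a ``fundamental domain for $\exp$'' should be replaced by the cleaner observation (which you do make later) that $\{g=0\}$ is one component of $\{h\circ\exp=0\}$ and that only the behavior along that component enters.
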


\begin{rem}{\bf (comparison to Givental's mirror)}
Notice  that $X'$ and Givental's $E_q$ in Equation (\ref{equation:Eq-equations}) are the same.  
However,  Givental's $W'$ in Equation (\ref{equation:givental-W'}) has a different sign
in front of the $w$-variables than the Hori-Vafa $W'.$  This difference means that both 
are obtained from the same auxiliary Landau-Ginzburg model via
different base change morphisms. 
\end{rem}

\subsection*{Acknowledgements}   
The author would like to thank E. Sharpe for his interest and for first pointing out Berglund-H\"ubsch mirror symmetry, and M. Krawitz for explaining his dual group construction.   Correspondence and conversations with L. Borisov, D. Cox  and B. Nill on several aspects of toric geometry have been invaluable.  Finally, we thank L. Katzarkov, E. Gasparim and Y. Ruan for their great encouragement and patience.

\newpage
\bibliography{toric_potential}
\bibliographystyle{halpha}  

\noindent
{Department of Mathematics, Drexel University, Philadelphia, PA 19104\\
\texttt{pclarke@math.drexel.edu}}

\end{document}